\newcommand{\D}{D^\triangleright}
\renewcommand{\d}[2][]{
	\ifthenelse{ \equal {#1} {} }
	{d}
	{d_{#1}}
	\ifthenelse{ \equal {#2} {}}
	{}
	{\left( #2 \right)}	
}
\renewcommand{\dh}[1]{
	\d[\mathscr H]{#1}
}
\newcommand{\cauli}{\overline{W_0}}
\newtheorem{theorem}{Theorem}[section]
\newtheorem{lemma}[theorem]{Lemma}
\newtheorem{corollary}[theorem]{Corollary}
\theoremstyle{definition}
\newtheorem{definition}[theorem]{Definition}
\theoremstyle{remark}
\newtheorem{remark}[theorem]{Remark}
\numberwithin{equation}{section}
\begin{document}

\title{Wandering cauliflowers}


\author{
	{William A. Don}\\
	{The Open University, Milton Keynes, UK}
}
\maketitle


\date{}


\begin{abstract}
In this paper we examine an orbit of simply connected wandering domains for the function ${f(z) = z\cos z+2\pi}$.  They are noteworthy in that they are non-congruent but arise from a simple closed form function.  Moreover, the shape of the wandering domains, suitably scaled, converges in the Hausdorff metric to the filled-in parabolic basin of the quadratic ${z^2+c}$ with $c=\tfrac{1}{4}$, commonly named the ``cauliflower''.  

We complete our analysis by classifying the wandering domains within the ninefold framework in \cite{benini+2021}, finding they are contracting and the diameters of the wandering domains tend to zero. 

To conclude we propose an expansion of the analysis to a wider family of functions and discuss some potential results.
\end{abstract}

\maketitle

\section{Introduction}\label{S:INTRO}
The iteration of analytic functions, $f: \Complex \to \Complex$, has been an area of active mathematical research for over a century, with particular investigation of their dynamic behaviour.  The notation $f^n(z) = f(f^{n-1}(z))$, for $n \geq 1$, and $f^0(z) = z$ is well established.  Such functions divide the plane into two completely invariant sets: the Fatou set, $F(f)$, the maximal open set on which the iterates $f^n$ form a normal family, and its complement the Julia set, $J(f)$.  The Fatou set then consists of zero or more connected components.  It is common practice to write $U_n$ for the Fatou component that contains $f^n(U)$.  Then if $U_m = U_{n}$ for minimal $m \geq 0, n \geq 1$ and $n > m$ we say $U$ is periodic ($m=0$) or pre-periodic ($m>0$).  These Fatou components have been thoroughly classified, for instance in \cite{bergweiler93}.  If $U_n \neq U_m$ for all positive pairs $m,n$, then $U$ is a wandering domain.

More recently, much attention has been given to wandering domains.  Sullivan showed in 1982 that no rational functions can have wandering domains \cite{sullivan85}, subsequently broadened to classes of transcendental functions (see, for example, \cite{eremenko+92}).  But wandering domains do exist. The first example was constructed by Baker in 1976 \cite{baker76}.  It was multiply connected and he also showed that every multiply connected Fatou component is a wandering domain, see \cite{baker75} and \cite{baker84}.  

Simply connected wandering domains also exist for transcendental functions; for example the function $f(z) = z +\sin z +2\pi$ \cite[example 2]{fagella2009+} has an orbit of congruent and bounded wandering domains symmetrically placed along the real axis where the $n$th iterate is the $2n\pi$-translate of the wandering domain.  Examples, like this, with closed form expressions, have typically been obtained using a technique in which a function $f:\Complex \setminus \{0\} \to \Complex \setminus\{0\}$ is transformed to a new function $g$ satisfying $\exp(k g(z)) = f(\exp(k z))$, for suitable $k, \abs{k} = 1$. When $g$ is appropriately configured, a simply connected Fatou component of $f$ lifts to a simply connected wandering domain of $g$ (see \cite{bergweiler95} and examples in \cite{marti-pete2021} and \cite{benini+2021}). These functions are often in closed form and the orbit of wandering domains usually consists of congruent components (see \cite[Example 3.5]{marti-pete2021} for an exception).  Another technique, using approximation theory, allows wandering domains with more diverse characteristics to be constructed.  Functions obtained in this way, however, do not usually have a closed form.

In this paper we present a transcendental entire function with simple closed form,  $f(z) = z \cos z + 2\pi$, which has an orbit of non-congruent simply connected wandering domains.  We develop techniques to prove a number of interesting properties of these domains, presented in our main result, Theorem \ref{T:INTRO Main theorem}, below.  Before stating it, we introduce notation used throughout this paper. 

We adopt the standard notation $D(a,b) = \{z \in \Complex : \abs{z-a} < b\}$, where $a \in \Complex$ and $b > 0$ and $\dh{A,B}$ to denote the Hausdorff distance between subsets $A, B$ of a metric space, discussed below in Section \ref{S:CAULI}.
\begin{definition}\label{D:INTRO definition of D_n and D-triangle_n}
	For $n \geq 1$, let $D_n$ be the disc 
		$
		    	D_n = \disc{ \frac{1}{6n\pi},\frac{1}{6n\pi} }
		$
	and 
		\begin{align*}
    		\D_n=\disc{ 2n\pi + \frac{1}{6n\pi},\frac{1}{6n\pi} }
		\end{align*}
	(the triangle indicates translation of $D_n$ to the right along the real axis by $2n\pi$).
\end{definition}
\begin{manualtheorem}{A}
	The function $f: \Complex \to \Complex$ given by $f(z) = z\cos z + 2\pi$ has an orbit of simply connected wandering domains $U_n$ for $n \geq 0$ such that $f(U_n) \subset U_{n+1}$.  There exists $N_0$ such that for $n \geq N_0$ the following properties hold:
\begin{enumerate}[(1)]\label{T:INTRO Main theorem}
\item $\D_n \subset U_n$ and $f(\D_n) \subset \D_{n+1}$;
\item $\mathrm{diam}(U_n) < \frac{2}{n\pi}$; 
\item $U_n$ is symmetrical about the real axis and $2n\pi \in \partial U_n$;
\item\label{T:INTRO:I convergence to cauliflower} $\dh{\overline {V_n}, \cauli} \to 0 $ as $n \to \infty$ where $\cauli$ is the compact set known as the cauliflower and $V_n$ is the component $U_n$ translated $2n\pi$ towards the origin and scaled by the factor $n$;
\item\label{T:INTRO:I nine-fold classification} under the ninefold classification of \cite{benini+2021}, the wandering domains $U_n$ are contracting and iterates converge to the boundary.
\end{enumerate}
\end{manualtheorem}
The wandering domains are illustrated in Figure \ref{F:JULIASET wandering domain}.  The notation in part \ref{T:INTRO:I convergence to cauliflower} is explained with more detail in Sections \ref{S:CAULI} and \ref{S:CONV} below.
\begin{figure}[H]
\centering
	\includegraphics[width=0.95\textwidth]{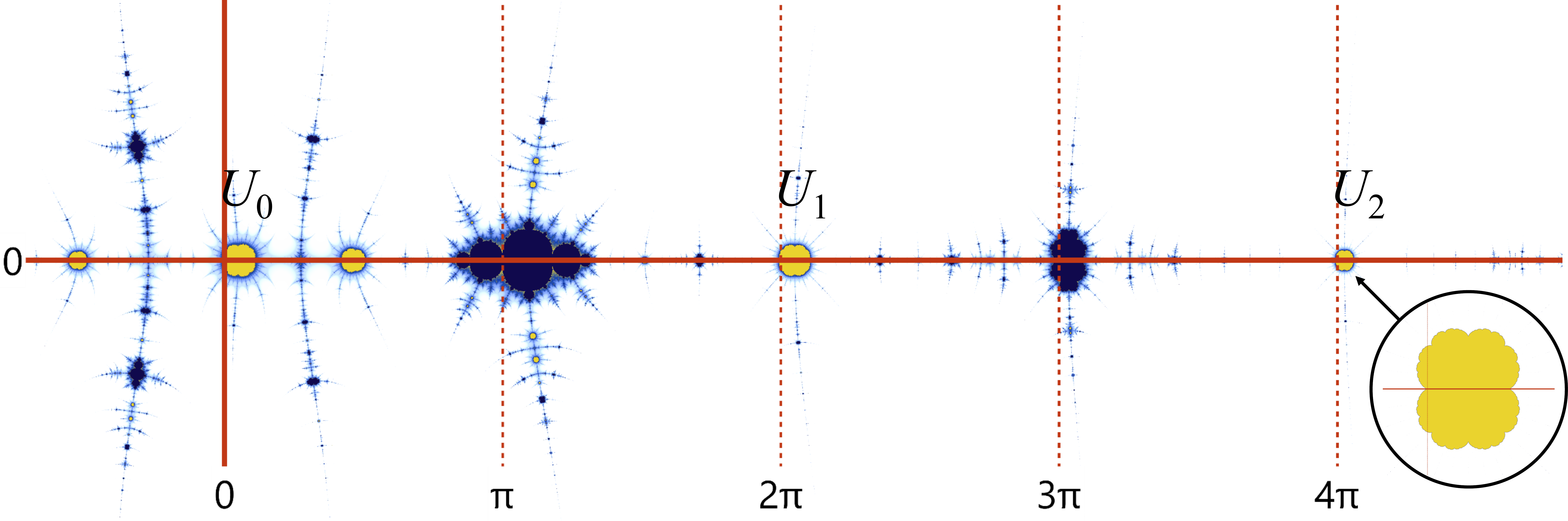}
\caption{The wandering cauliflower and some pre-images are shown in yellow. The dark components are parabolic basins of $f^2$ (in which orbits under $f^2$ converge to odd multiples of $\pi$) and associated pre-images. The wandering component $U_2$ is inset.}\label{F:JULIASET wandering domain}
\end{figure}
We note that the property $\overline{f(z)} = f(\overline z)$ implies the Fatou set is symmetrical about $\Real$.  The first step in the proof of Theorem \ref{T:INTRO Main theorem} is to show that the points $2n\pi$ for $n \in \Integer $ belong to the Julia set of $f$ (Section \ref{S:JULIASET}).  We then show that for large enough positive integers $n$, $f(\D_n) \subset \D_{n+1}$ and each $\D_n$ sits inside $U_n$, a component of $F(f)$ (Section \ref{S:DISC}).  We go on to prove the $U_n$ are distinct, therefore forming an orbit of wandering domains, and prove they are simply connected and bounded (Section \ref{S:WANDER}).  We add pre-images of our $U_{n}$ so that we have a sequence of wandering domains $U_0, U_1, \ldots, U_n, \ldots $. At this point we have established the first three parts of Theorem \ref{T:INTRO Main theorem}.

We now introduce the cauliflower and the Hausdorff metric (Section \ref{S:CAULI}) and prove that, when scaled and translated, the sequence $(U_n)$ converges as $n \to \infty$ to the cauliflower (Section \ref{S:CONV}), proving Theorem \ref{T:INTRO Main theorem}~part~\ref{T:INTRO:I convergence to cauliflower}.  We finish with the proof of Theorem~\ref{T:INTRO Main theorem}~part~\ref{T:INTRO:I nine-fold classification} in Section \ref{S:DYNA}.

Finally, in Section \ref{S:CONCLUDE}, we conjecture that this work may extend to a wider family of transcendental entire functions, $f_\lambda(z) = z\cos z + \lambda \sin z + 2\pi$ for $\lambda \in \Complex$, for which we make preliminary observations and show computer images but postpone further discussion to future work.
\section{Points in the Julia set of $f$}\label{S:JULIASET}
In this section we show that the points $2n\pi$ are members of $J(f)$.  We begin by using repelling fixed points, which always belong to $J(f)$ \cite[Section 3.1]{bergweiler93}.  The following lemma provides a supply of these.
\begin{lemma}\label{L:JULIASET: supply of repelling points}
	The function $f(z)=z\cos z + 2\pi$ has an indifferent fixed point at $z = \pi$ with multiplier $-1$.  All other real fixed points are repelling, positioned at $z=4\pi/3$, and thereafter, for $n = 1,2, \ldots$, at $z=2n\pi + \eta_{2n}$ and $z=2(n+1)\pi - \eta_{2n+1}$, where $0 < \eta_{2n}, \eta_{2n+1} < \tfrac{\pi}{2}$ and $\eta_k \to 0 $ as $k\to\infty$.
\end{lemma}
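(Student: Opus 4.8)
The plan is to reduce the location of the real fixed points to the single equation $z(1-\cos z)=2\pi$, which I study one half-period of $\cos$ at a time, and to read off the multiplier directly from the fixed-point relation.

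Write $g(z)=z(1-\cos z)$, so $z$ is fixed exactly when $g(z)=2\pi$. Since $0\le 1-\cos z\le 2$, every real fixed point satisfies $z\ge\pi$, and $z=\pi$ is one (there $1-\cos\pi=2$); a direct substitution gives another at $z=4\pi/3$. For the multiplier, $f'(z)=\cos z-z\sin z$, and at a fixed point $\cos z=1-\tfrac{2\pi}{z}$, so
\[
z^{2}\sin^{2}z=(1-\cos z)(1+\cos z)\,z^{2}=4\pi(z-\pi),
\]
whence
\[
f'(z)=1-\frac{2\pi}{z}\mp 2\sqrt{\pi(z-\pi)},
\]
the sign of the last term being opposite to that of $\sin z$. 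In particular $f'(\pi)=-1$, the claimed indifferent point, and $f'(4\pi/3)=-\tfrac12+\tfrac{2\pi}{\sqrt3}>1$. Granting, from the location step below, that every remaining real fixed point has $z>2\pi$, we get $1-\tfrac{2\pi}{z}\in[0,1)$ and $2\sqrt{\pi(z-\pi)}\ge 2\pi$, so $f'(z)\le 1-2\pi<-1$ when $\sin z>0$ and $f'(z)\ge 2\pi>1$ when $\sin z<0$; thus all of these are repelling.

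It remains to locate the fixed points, and the half-angle form of $g'(z)=(1-\cos z)+z\sin z$ is the key tool. On a descending half-period, setting $z=2n\pi+s$ with $s\in(0,\pi)$ gives
\[
g'(z)=2\sin\tfrac{s}{2}\Bigl[\sin\tfrac{s}{2}+(2n\pi+s)\cos\tfrac{s}{2}\Bigr]>0,
\]
so $g$ increases strictly from $g(2n\pi)=0$ to $g((2n+1)\pi)=2(2n+1)\pi$; for $n\ge 1$ this exceeds $2\pi$, so $g=2\pi$ has exactly one root there, and since $g(2n\pi+\tfrac{\pi}{2})=2n\pi+\tfrac{\pi}{2}>2\pi$ that root lies in $(2n\pi,2n\pi+\tfrac{\pi}{2})$, i.e.\ it equals $2n\pi+\eta_{2n}$ with $\eta_{2n}\in(0,\tfrac{\pi}{2})$. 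On an ascending half-period, setting $z=(2n+1)\pi+t$ with $t\in(0,\pi)$ and $P=(2n+1)\pi$ gives
\[
g'(z)=2\cos\tfrac{t}{2}\Bigl[\cos\tfrac{t}{2}-(P+t)\sin\tfrac{t}{2}\Bigr];
\]
here $\cos\tfrac{t}{2}>0$ on $(0,\pi)$ and the bracket is strictly decreasing in $t$ (its derivative is $-\tfrac32\sin\tfrac{t}{2}-\tfrac{P+t}{2}\cos\tfrac{t}{2}<0$), running from $1$ down to $-(P+\pi)$, so $g$ is unimodal---strictly increasing, then strictly decreasing---on the half-period, with turning point in its first quarter. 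For $n\ge 1$ one has $g(z)\ge z\ge(2n+1)\pi>2\pi$ on that first quarter, and then $g$ decreases from $g((2n+\tfrac32)\pi)=(2n+\tfrac32)\pi>2\pi$ to $g(2(n+1)\pi)=0$, so $g=2\pi$ has exactly one root, lying in $(2(n+1)\pi-\tfrac{\pi}{2},2(n+1)\pi)$; it equals $2(n+1)\pi-\eta_{2n+1}$ with $\eta_{2n+1}\in(0,\tfrac{\pi}{2})$. For $n=0$ the same unimodality shows $g$ rises from $g(\pi)=2\pi$ and then falls to $g(2\pi)=0$, so $(\pi,2\pi)$ contains exactly one root, necessarily $4\pi/3$. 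As $[\pi,\infty)$ is the union of the intervals $[k\pi,(k+1)\pi]$, $k\ge 1$, whose shared endpoints $k\pi$ are not fixed for $k\ge 2$, this exhausts the real fixed points, and each is $\pi$, $4\pi/3$, or has $z>2\pi$, as assumed above.

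Finally, every fixed point of the form $2m\pi\pm\eta$ satisfies $1-\cos\eta=2\pi/z$ with $z\to\infty$ as the index grows, so $\cos\eta\to 1$ and, since $\eta\in(0,\tfrac{\pi}{2})$, $\eta_k\to 0$. I expect the only real work to be the bookkeeping of the third paragraph: checking that the count is exact (one fixed point per half-period, no more) and that the three exceptional items---$z=\pi$, $z=4\pi/3$, and the half-period $(\pi,2\pi)$---slot into the generic pattern correctly. The half-angle factorisation of $g'$ reduces the sign analysis to inspection, so no delicate estimate should be needed.
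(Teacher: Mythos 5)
Your argument is correct, and it fills in a gap the paper leaves open. Both proofs start from the same fixed-point relation $\cos x = 1 - \tfrac{2\pi}{x}$ (equivalently $g(z) := z(1-\cos z) = 2\pi$), and both exploit the identity $z^2\sin^2 z = 4\pi(z-\pi)$ at fixed points to estimate the multiplier $f'(z) = \cos z - z\sin z$. The paper, however, simply \emph{asserts} that there are two fixed points in each interval $\bigl(2n\pi, 2(n+1)\pi\bigr)$, with one in each half of width $\pi/2$; it gives no monotonicity or counting argument for this. Your third paragraph supplies exactly that missing bookkeeping: the half-angle factorisation $g'(2n\pi+s) = 2\sin\tfrac{s}{2}\bigl[\sin\tfrac{s}{2}+(2n\pi+s)\cos\tfrac{s}{2}\bigr]$ shows $g$ is strictly increasing on each descending half-period, and the analogous factorisation on ascending half-periods, together with the observation that the bracket is strictly decreasing, shows $g$ is unimodal there --- giving exactly one root in each half-period and locating each within $\pi/2$ of the nearest even multiple of $\pi$. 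You also sharpen the multiplier estimate into the exact formula $f'(z) = 1 - \tfrac{2\pi}{z} \mp 2\sqrt{\pi(z-\pi)}$, with sign opposite that of $\sin z$, which immediately exhibits the alternating sign of the multiplier and recovers both $f'(\pi) = -1$ and the paper's bound $|f'| > 2\pi - 1$ for $z > 2\pi$; the paper works instead with $|f'(x)| \geq |x\sin x| - |\cos x|$, which is the same algebra in absolute values. The $\eta_k \to 0$ conclusion is identical in both. In short: same route, but you prove the fixed-point count where the paper only states it, and your exact multiplier formula is a cleaner way to see the repulsion.
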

\begin{proof}
	The function $f$ has a real fixed point $x \neq 0$ when $\cos x = 1 - \frac{2\pi}{x}$.  Thus for $x \leq 2\pi $, there are two fixed points, one at $x =\pi$, multiplier $-1$, and a repelling fixed point at $x = 4\pi/3$.  For $x > 2\pi$, there are two fixed points in each open interval $\big(2n\pi, 2(n+1)\pi\big)$ for $n \geq 1$, which may be written $2n\pi + \eta_{2n}$ and $2(n+1)\pi - \eta_{2n+1}$ where
		\begin{align*}
			0 < \eta_{2n},\eta_{2n+1} < \frac{\pi}{2}.
		\end{align*}
	Since $\frac{2\pi}{x}$ tends to zero as $x \to \infty$, we have $\lim_{k\to\infty} \eta_k = 0$. At a fixed point $x > 2\pi$, $\cos x = 1 - \frac{2\pi}{x}$ and the multiplier is $f'(x) = \cos x - x \sin x$, so
		\begin{align*}
			\abs{f'(x)} 
			 	&	\geq \abs{x \sin x} - \abs{\cos x}
				\\&	\geq x\sqrt{1 - \left(1 - \frac{2\pi}{x}\right)^2} - 1 
				\\&	= 2\sqrt{ x \pi - \pi^2 } - 1 
				\\&	> 2\pi-1.
		\end{align*}
	Thus all real fixed points greater than $2\pi$ are repelling.
\end{proof}
We use this supply of points in $J(f)$ to show that $0 \in J(f)$.  We first show that there exist  arbitrarily small negative real points whose iterates eventually move away from a multiple of $2\pi$.
\begin{lemma}\label{L:JULIASET: iteration of negative real expands to interval}
	For all $\delta >0$ there exists a real $x_0 \in (-\delta,0)$ with $f^n(x_0) \leq 2n\pi - \frac{\pi}{2}$ for some $n \geq 2$.
\end{lemma}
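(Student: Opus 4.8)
The plan is to follow the orbit of a small negative real number as a perturbation of the orbit of the origin. Since $f(2k\pi) = 2k\pi\cos(2k\pi) + 2\pi = 2(k+1)\pi$, the forward orbit of $0$ is exactly $2n\pi$, $n \ge 0$, and since $f'(2k\pi) = \cos(2k\pi) - 2k\pi\sin(2k\pi) = 1$, the chain rule gives $(f^n)'(0) = 1$ for every $n$. Thus the orbit of $0$ is parabolic in type: there is no exponential instability to exploit and a linear estimate gives nothing. So for $x_0 \in (-\delta,0)$ I would set $y_n := f^n(x_0) - 2n\pi$ and, using $f(2n\pi + y) = (2n\pi + y)\cos y + 2\pi$, record the exact recursion
\[
y_{n+1} - y_n = (2n\pi + y_n)(\cos y_n - 1).
\]

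The key steps are then as follows. First reduce to $\delta \le \tfrac{\pi}{2}$ (for larger $\delta$, shrink the interval), so that $y_1 = x_0\cos x_0 \in (-\tfrac{\pi}{2},0)$. Arguing by contradiction, suppose $f^n(x_0) > 2n\pi - \tfrac{\pi}{2}$, i.e. $y_n > -\tfrac{\pi}{2}$, for all $n \ge 2$. An induction on the recursion then shows $y_n \in (-\tfrac{\pi}{2},0)$ for all $n \ge 1$: on this window $2n\pi + y_n > 0$ while $\cos y_n - 1 < 0$, so every increment $y_{n+1}-y_n$ is strictly negative; hence $(y_n)$ is decreasing and stays negative, so also $|y_n| \ge |y_1|$ and therefore $\cos y_n - 1 \le \cos y_1 - 1 =: -c_0 < 0$ for the fixed constant $c_0 = 1 - \cos y_1$. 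Combining this with $2n\pi + y_n \ge \tfrac{3\pi}{2}$ for $n \ge 1$ gives $y_{n+1} - y_n \le -\tfrac{3\pi}{2}c_0$, and telescoping forces $y_n \to -\infty$, contradicting $y_n > -\tfrac{\pi}{2}$. Hence $y_n \le -\tfrac{\pi}{2}$, that is $f^n(x_0) \le 2n\pi - \tfrac{\pi}{2}$, for some $n$; since $y_1 > -\tfrac{\pi}{2}$ this $n$ is at least $2$. (The same argument shows \emph{every} $x_0 \in (-\delta,0)$ with $\delta \le \tfrac{\pi}{2}$ has the stated property.)

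The one delicate point — what I would call the main obstacle — is precisely the degeneracy $(f^n)'(0) = 1$: the escape from the neighbourhood of the orbit $\{2n\pi\}$ is produced not by the linear part but by the quadratic term, since $\cos y_n - 1 = -\tfrac12 y_n^2 + O(y_n^4)$ makes the increment $-n\pi y_n^2 + O(y_n^3)$, and one must be certain that no higher-order correction reverses its sign while the orbit is still inside the window $(-\tfrac{\pi}{2},0)$. I would handle this cleanly by keeping the recursion in the closed form displayed above rather than truncating — the sign of the increment is then immediate from $\cos y_n - 1 \le 0$ — and by using the monotonicity of $(y_n)$, which keeps the orbit real and inside the window right up to the moment it escapes, so that no separate escape-time estimate is needed.
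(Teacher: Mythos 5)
Your proof is correct and shares the paper's core strategy: reduce to $\delta \le \frac{\pi}{2}$, pass to the translated variable $y_n = f^n(x_0) - 2n\pi$, derive the closed-form recursion $y_{n+1} - y_n = (2n\pi + y_n)(\cos y_n - 1)$, assume for contradiction that $y_n > -\frac{\pi}{2}$ for all $n$, and observe that $(y_n)_{n\geq 1}$ is then negative and strictly decreasing. The two arguments diverge only in the endgame. The paper notes the bounded decreasing sequence has a limit $x \in [-\frac{\pi}{2}, y_1)$, rewrites the recursion in the ratio form $(y_{n+1}+2n\pi)/(y_n+2n\pi) = \cos y_n$, and lets $n \to \infty$: the left side tends to $1$, forcing $\cos x = 1$, which is impossible on that interval. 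You instead exploit that once $y_n \le y_1 < 0$ one has $|y_n| \ge |y_1|$, hence $\cos y_n - 1 \le \cos y_1 - 1 =: -c_0 < 0$ (all absolute values lying in $(0,\frac{\pi}{2})$ where cosine is decreasing), combine with $2n\pi + y_n \ge \frac{3\pi}{2}$ to get the uniform bound $y_{n+1} - y_n \le -\frac{3\pi}{2}c_0$, and telescope to drive $y_n \to -\infty$, contradicting the assumed lower bound. Your closing is slightly more elementary and quantitative, needing no appeal to monotone convergence or a limit ratio, while the paper's ratio trick is arguably cleaner algebraically; both are sound, and both in fact establish the stronger statement that every $x_0 \in (-\delta,0)$ works.
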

\begin{proof}
	Without loss of generality we may suppose $0 < \delta < \frac{\pi}{2}$.  
	
	Aiming for a contradiction, suppose that for all $x \in (-\delta,0)$ and all $n \geq 0$, we have 
	\begin{align}\label{E:JULIASET: f^n(z) > 2 n pi - pi/2 in proof negative z iterates to interval}
		f^n(x) > 2n\pi - \pi/2.
	\end{align}
	Choose a point $x_0 \in (-\delta,0)$ and create the sequence $x_n = f^n(x_0)-2n\pi$.  Clearly the $x_n$ are real and by \eqref{E:JULIASET: f^n(z) > 2 n pi - pi/2 in proof negative z iterates to interval}, $x_n > -\frac{\pi}{2}$ for all $n \geq 0$. Rearrange the definition of $x_n$ to obtain for $n \geq 0$
			\begin{align}\label{E:JULIASET: x_n recurrence formula for initial negative x}
			    x_{n+1} = x_n + (2n\pi + x_n)(\cos x_n - 1)
			\end{align}
	whence $x_1 < 0$ and thereafter for $n \geq 1$, $x_{n+1} < x_n$.  Thus the sequence becomes decreasing and is bounded below, and so has a limit $x \in [-\frac{\pi}{2}, x_1)$.  But writing equation \eqref{E:JULIASET: x_n recurrence formula for initial negative x} in the form
		\begin{align*}
		    \frac{x_{n+1} + 2n\pi}{x_n+2n\pi} = \cos x_n
		\end{align*}
	and letting $n \to \infty$ we find $\cos x = 1$ which gives a contradiction because $ -\frac{\pi}{2} \leq x < 0$.  Thus the assumption is false and there exists some $x_0 \in (-\delta,0)$ and $n \geq 0$ for which \eqref{E:JULIASET: f^n(z) > 2 n pi - pi/2 in proof negative z iterates to interval} is false.  The inequality is always true when $n=0$ or $1$ so the required $n$ must be $2$ or more.
\end{proof}
We can now deduce the following.
\begin{lemma}\label{L:JULIASET: 2npi in the J(f)}
	For all $n \in \Integer$, $2n\pi \in J(f)$.
\end{lemma}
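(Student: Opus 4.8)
The key idea is that $0 \in J(f)$, and then the result for all $2n\pi$ follows by backward invariance of the Julia set together with the observation that $f(2n\pi) = 2n\pi \cos(2n\pi) + 2\pi = 2n\pi + 2\pi = 2(n+1)\pi$, so the points $\{2n\pi : n \in \Integer\}$ form (part of) a forward orbit starting from... almost. Actually $f(0) = 2\pi$, $f(2\pi) = 4\pi$, and in general $f(2n\pi) = 2(n+1)\pi$ for $n \geq 0$; and since $J(f)$ is completely invariant, it suffices to show a single point of this chain lies in $J(f)$ and that all the others are iterated pre-images or images of it. In fact once $0 \in J(f)$, every $2n\pi$ with $n \geq 0$ is a forward image of $0$, hence in $J(f)$; and for $n < 0$ one notes $2(n+1)\pi$ has $2n\pi$ among its preimages, so backward invariance gives those too (or one argues directly that $J(f)$ is closed and symmetric and picks up the negative points as limits / preimages). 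So the whole lemma reduces to: $0 \in J(f)$.

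\textbf{Showing $0 \in J(f)$.} The plan is to argue by contradiction: suppose $0 \in F(f)$, so $0$ lies in some Fatou component $U$, which contains a disc $D(0,\delta)$ for some $\delta > 0$. By Lemma \ref{L:JULIASET: iteration of negative real expands to interval}, there is a point $x_0 \in (-\delta, 0) \subset U$ and an integer $n \geq 2$ with $f^n(x_0) \leq 2n\pi - \frac{\pi}{2}$. On the other hand, Lemma \ref{L:JULIASET: supply of repelling points} furnishes, for each large $n$, a repelling fixed point $p_n = 2n\pi + \eta_{2n} \in J(f)$ with $0 < \eta_{2n} < \frac{\pi}{2}$; in particular $p_n > 2n\pi > 2n\pi - \frac{\pi}{2} \geq f^n(x_0)$, and also $p_n < 2(n+1)\pi$. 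The obstruction to exploiting this directly is that $x_0 \in D(0,\delta)$ but $f^n(x_0)$ need not lie near $0$ — we need to compare the orbit of the whole disc with these Julia points. The clean way is: $U_n$, the Fatou component containing $f^n(U)$, contains $f^n(x_0)$, which is a real point to the left of $p_n$. But I would rather run the contradiction through normality directly, so the next paragraph sets that up.

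\textbf{The normality argument.} Here is the route I would take. Since $0 \in F(f)$ and the Fatou set is open and symmetric about $\Real$, the component $U$ containing $0$ contains an interval $(-\delta, \delta)$. The family $\{f^n\}$ is normal on $U$. Now I claim we can also locate, near $0$, points whose forward orbit stays comfortably above $2n\pi$ for all $n$ — indeed $f^n(0) = 2n\pi$ exactly (a quick induction using $\cos 2k\pi = 1$), and by continuity, for $x$ slightly positive, $f^n(x)$ will exceed $2n\pi$ for a while; more robustly, one checks using the recurrence that small positive $x$ gives $f^n(x) - 2n\pi$ bounded below or even increasing for the relevant range of $n$. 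Thus within $U$ we have, on one hand, points like $x_0$ from Lemma \ref{L:JULIASET: iteration of negative real expands to interval} whose $n$-th iterate is $\leq 2n\pi - \frac{\pi}{2}$, and on the other hand $0$ itself with $f^n(0) = 2n\pi$, and the repelling fixed point $2n\pi + \eta_{2n} \in J(f)$ lies strictly between them on the real line. The real interval $[f^n(x_0), f^n(0)] = [f^n(x_0), 2n\pi]$ is the image under $f^n$ of a subinterval of $(-\delta, 0]$, so it lies in $U_n \subset F(f)$ — but it is separated from $2n\pi + \eta_{2n} \in J(f)$ only by the gap $\eta_{2n} < \frac\pi2$, which does not yet give a contradiction. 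To close it, I would instead push one more step: apply $f$ once more, or note that $f$ maps the interval $(0,\text{something})$ across the fixed point; alternatively, and most simply, use that the interval $[f^n(x_0), 2n\pi - \frac\pi2]$ has length at least... no — the decisive fact is that $f^n$ maps an interval inside the fixed disc $D(0,\delta)$ onto a real interval whose length exceeds, say, $2\pi$ for infinitely many $n$ (since $f^n(x_0) \le 2n\pi - \frac\pi2$ while $f^n$ of a small positive point exceeds $2n\pi$), and this interval therefore contains a repelling fixed point $2n\pi + \eta_{2n} \in J(f)$ in its interior — contradicting $U_n \subset F(f)$.

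\textbf{Main obstacle.} The genuinely delicate point is making the ``small positive point has large iterate'' half rigorous: one needs that some point $x_1 \in (0,\delta)$ has $f^n(x_1) \geq 2n\pi + \frac\pi2$ (or at least $> 2n\pi + \eta_{2n}$) for the same $n$ supplied by Lemma \ref{L:JULIASET: iteration of negative real expands to interval}. This is where I expect the real work. The cleanest fix is probably to note $f^n(0) = 2n\pi$ exactly and that $f^n$ restricted to $(-\delta,\delta)$ is continuous and non-constant (it is analytic and not identically $2n\pi$), so its image is an interval properly containing $2n\pi$; combined with $f^n(x_0) \le 2n\pi - \frac\pi2 < 2n\pi$ from Lemma \ref{L:JULIASET: iteration of negative real expands to interval}, and with the point $2n\pi \in$ image, the image interval contains $[f^n(x_0), 2n\pi]$; we then only need one repelling point in $[f^n(x_0),2n\pi)$, but the fixed point $2n\pi + \eta_{2n}$ lies just outside. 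So the truly clean argument: $f^{n}$ maps a subinterval of $(-\delta,\delta)$ onto an interval containing $2n\pi$ in its \emph{interior} (as $f^n$ is open at $0$ unless $(f^n)'(0)=0$, a case handled separately since $f'(0) = 1 \neq 0$ forces $(f^n)'(0) \ne 0$), hence containing $2n\pi + \varepsilon$ for small $\varepsilon$; iterating $f$ one further time and using that $f$ near $2n\pi$ has derivative $f'(2n\pi) = 1 \neq 0$ and moves points, one shows the image of $U$ under a suitable iterate contains the repelling fixed point $2(n+k)\pi + \eta_{2(n+k)} \in J(f)$ for some $k$, the desired contradiction. Thus $0 \in J(f)$, and the lemma follows by complete invariance.
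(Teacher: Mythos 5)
Your overall strategy — prove $0 \in J(f)$ by contradiction and then invoke complete invariance to reach every $2n\pi$ — matches the paper's, and you correctly identify the two supporting lemmas. But there is a genuine gap in the contradiction step: you aim at the wrong repelling fixed point. You correctly observe that $f^n\big([x_0,0]\big)$ contains $[f^n(x_0), 2n\pi]$, but then try to push past $2n\pi$ on the right so as to capture the fixed point $2n\pi + \eta_{2n}$, which that interval does not reach. Your proposed workaround (openness of $f^n$ at $0$, then iterating once more) is not a proof and, more importantly, cannot be turned into one: the positive side of $2n\pi$ is the attracting direction of this parabolic chain, so for a small positive $x$ the offset $f^n(x) - 2n\pi$ shrinks to zero and stays well inside $(0,\eta_{2n})$; ``containing $2n\pi + \varepsilon$ for small $\varepsilon$'' buys nothing, and another application of $f$ only makes the deficit worse.

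The fix — and this is what the paper actually does — is to use the fixed point on the other side of $2n\pi$. Lemma \ref{L:JULIASET: supply of repelling points} also supplies, for each $n \geq 2$, a repelling fixed point $p = 2n\pi - \eta_{2n-1}$ lying in $(2(n-1)\pi, 2n\pi)$ with $0 < \eta_{2n-1} < \pi/2$, so $p \in (2n\pi - \tfrac{\pi}{2}, 2n\pi)$. Since Lemma \ref{L:JULIASET: iteration of negative real expands to interval} gives $f^n(x_0) \leq 2n\pi - \tfrac{\pi}{2} < p < 2n\pi = f^n(0)$, the intermediate value theorem produces $\xi \in (x_0,0)$ with $f^n(\xi) = p \in J(f)$, and complete invariance gives the contradiction immediately — no openness argument, no extra iteration, no appeal to $2n\pi + \eta_{2n}$. (In the paper's proof the point is printed as ``$p = 2n\pi + \eta_{2n-1}$'', which is evidently a sign typo: only $2n\pi - \eta_{2n-1}$ lies in the interval $[f^n(x_0), 2n\pi]$ that the argument needs.)
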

\begin{proof}
	We show first $0 \in J(f)$.  If not then $0 \in F(f)$ and as $F(f)$ is open and invariant, there exists a neighbourhood $V$ of the origin such that $f^n(V)$ is in $F(f)$ for all $n$.  But by Lemma \ref{L:JULIASET: iteration of negative real expands to interval}, every neighbourhood of $0 $ contains a small negative real $x_0$ with $n \geq 2$ for which $f^n(x_0) \leq 2n\pi -\pi/2$.  Then the image under $f^n$ of the interval $[x_0,0]$ contains $f^n(0) = 2n\pi$ and $f^n(x_0) \leq 2n\pi - \pi/2$, and by continuity all points between.  But Lemma \ref{L:JULIASET: supply of repelling points} implies that this interval also contains a repelling fixed point $p$ of $f$, where $p = 2n\pi - \eta_{2n-1}$;  neither end point equals $p$ and so there exists $\xi \in (x_0,0)$ such that $f^n(\xi) = p$.  Every repelling fixed point is in $J(f)$, and by the complete invariance of $J(f)$, it follows $\xi \in J(f)$.  This contradicts the supposition that $V \subset F(f)$ and so $0 \in J(f)$.  
	
	All points $2n\pi$ are either images or pre-images of $0$ under $f^n$,  and so by complete invariance, $2n\pi \in J(f)$ for $n \in \Integer$.  
\end{proof}
\section{Behaviour of $f^n$ on a disc}\label{S:DISC}
In Definition \ref{D:INTRO definition of D_n and D-triangle_n} we introduced contracting discs $\D_n$.  For large enough $n$ we now show that $f$ maps $\D_n$ into its successor, a fact which we later use to establish a wandering domain for $f$.  However, rather than work with $f^n$ directly it is convenient to use two changes of variable, prompting the following definitions.
\begin{definition}\label{D:DISC define C_n, T, h_n and psi_m,n, w_n and H_a}
	Define the following:
	\begin{enumerate}[(a)~]
	\item
	the map $T: \Complex \to \Complex$ to be the translation $z \mapsto z+2\pi$;
	\item for integers $n \geq 0$,
		\begin{align*}
			h_n(z) &= T^{-(n+1)}\circ f \circ T^n(z) = (z+2n\pi)\cos z - 2n\pi;
		\end{align*}
	\item
	for $m,n \geq 0$, 
		\begin{align*}
			\psi_{m,n}(z) =  
			\begin{dcases}
				z, &\text{for } m \geq 0, n = 0 \\
				h_{m+n-1} \circ h_{m+n-2}\circ \cdots \circ h_m(z), & \text{for } m\geq 0, n > 0;
			\end{dcases} 
		\end{align*}
	\item for $m\geq 0$, the family $\mathscr P_m = \{\psi_{m,n}: n \geq 0\}$;
	\item
	for real $a \geq 0$, $H_a = \{z \in \Complex: \Re z > a \}$; 
	\item for $n > 0$ and $t \in H_0$, $\displaystyle w_n(t) = \frac{1}{h_n(1/t)}$; and 
	\item
	for $n > 0$, the circle $C_n = \left \{ z \in \Complex:  \abs{z} = \frac{2}{n\pi} \right \}$.
	\end{enumerate}
\end{definition}
\begin{remark}
	We state here a number of elementary consequences.  For $m,n \geq 1$, 
		\begin{align*}
	    	\psi_{m,n}(z) = T^{-(m+n)}\circ f^n \circ T^m(z).
		\end{align*}
	The cosine power series gives for $n \geq 1$ and all $z$
		\begin{align}\label{E:DISC: power series for h_n}
			h_n(z)
				&		= \sum_{k=0}^{\infty} (-1)^k \left(\frac{z^{2k+1}}{(2k)!} - \frac{2n\pi z^{2k+2}}{(2k+2)!}\right) 
				\\&		= z - \frac{2n\pi}{2!} z^2 - \frac{1}{2!} z^3 
						+ \frac{2n\pi}{4!} z^4 + \frac{1}{4!}z^5 - \frac{2n\pi}{6!} z^6 - \frac{1}{6!} z^7  +  \cdots 
						\nonumber .
		\end{align}
	For $a > 0 $, $H_a = 1/\disc{\tfrac{1}{2a},\tfrac{1}{2a}}$.
\end{remark}
We now follow Fatou's approach to analysing the dynamical behaviour of iterates near a parabolic fixed point and examine the behaviour of the function $w_n$ on a suitable half-plane.
\begin{lemma}\label{L:DISC w_n moves to right on half-plane}
	For $n \geq  1$ and $t \in H_{3n\pi}$ the following inequalities hold
		\begin{align}\label{E:DISC real part t inequality}
	    	\Re t + \frac{9}{20}n\pi < \Re w_n(t) < \Re t + \frac{27}{17}n\pi 
		\end{align}
and if $a \geq 3n\pi$, $w_n(H_a) \subset H_{a+\frac{9}{20}n\pi}$.
\end{lemma}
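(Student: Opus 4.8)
The plan is to pass to the reciprocal variable, extract an exact closed form for $w_n(t)-t$, and then read off its real part by recognising the leading part as a M\"obius transform of a small disc.

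By the remark that $H_a=1/\disc{\tfrac{1}{2a},\tfrac{1}{2a}}$, the hypothesis $t\in H_{3n\pi}$ is equivalent to $z:=1/t$ lying in the disc $D_n:=\disc{\tfrac{1}{6n\pi},\tfrac{1}{6n\pi}}$; in particular $\Re z>0$ and $\abs{z}<\tfrac{1}{3n\pi}\le\tfrac{1}{3\pi}$. From $h_n(z)=(z+2n\pi)\cos z-2n\pi$ a short computation gives $z-h_n(z)=(z+2n\pi)(1-\cos z)$, so, writing $v(z):=\dfrac{2(1-\cos z)}{z^{2}}$ (an even entire function with $v(0)=1$, obtained by re-summing \eqref{E:DISC: power series for h_n}),
\begin{align*}
	w_n(t)-t=\frac{1}{h_n(z)}-\frac{1}{z}=\frac{z-h_n(z)}{z\,h_n(z)}=\frac{\bigl(n\pi+\tfrac{z}{2}\bigr)\,v(z)}{\cos z-n\pi z\,v(z)}.
\end{align*}
This identity is the crux: its ``main term'' is $\dfrac{n\pi}{1-n\pi z}$, the remaining factors differing from $1$ only by quantities controlled by $\abs{z}$.

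Next I would quantify that perturbation. For $\abs{z}<\tfrac{1}{3n\pi}$ one bounds $\abs{v(z)-1}$, $\abs{\cos z-1}$ and $\abs{z/(2n\pi)}$ by explicit multiples of $\abs{z}^{2}$, hence of $1/n^{2}$, while $n\pi z\in\disc{\tfrac{1}{6},\tfrac{1}{6}}$ forces $\abs{1-n\pi z}\ge 1-\abs{n\pi z}>\tfrac{2}{3}$, keeping the denominator bounded away from $0$. Rearranging the identity then gives
\begin{align*}
	w_n(t)-t=\frac{n\pi}{1-n\pi z}\,\bigl(1+\varepsilon_n(z)\bigr),\qquad\abs{\varepsilon_n(z)}\le\frac{C}{n^{2}},
\end{align*}
for an absolute constant $C$, uniformly in $z\in D_n$. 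Now the dilation $\zeta=n\pi z$ carries $D_n$ onto $\disc{\tfrac{1}{6},\tfrac{1}{6}}$, and the M\"obius map $\zeta\mapsto\tfrac{1}{1-\zeta}$ carries $\disc{\tfrac{1}{6},\tfrac{1}{6}}$ onto an explicit disc symmetric about $\Real$; hence $\Re\dfrac{n\pi}{1-n\pi z}$ is confined to an explicit interval of the shape $(c_{1}n\pi,\,c_{2}n\pi)$ and has modulus $O(n\pi)$. Combining this with $\bigl|\Re\bigl(\tfrac{n\pi}{1-n\pi z}\,\varepsilon_n(z)\bigr)\bigr|=O(1/n)$ yields the two-sided estimate \eqref{E:DISC real part t inequality}, after the finitely many small $n$ are disposed of by direct inspection. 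The closing assertion is then immediate: if $a\ge 3n\pi$ then $H_a\subseteq H_{3n\pi}$, so for $t\in H_a$ the left inequality of \eqref{E:DISC real part t inequality} gives $\Re w_n(t)>\Re t+\tfrac{2}{3}n\pi>a+\tfrac{2}{3}n\pi$, i.e.\ $w_n(t)\in H_{a+\frac{2}{3}n\pi}$.

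The main obstacle is the bookkeeping in the perturbation step: the inequalities in \eqref{E:DISC real part t inequality} are genuinely tight near the boundary circle $\partial D_n$ (equivalently, near $t=3n\pi$), so the constants in the estimates of $v(z)-1$, $\cos z-1$ and $\varepsilon_n(z)$ must be tracked honestly rather than absorbed into $O(\cdot)$-notation, and the extremal points of the M\"obius image of $\disc{\tfrac{1}{6},\tfrac{1}{6}}$ locate precisely where the estimate is worst. Once the exact identity above is available, everything else reduces to a one-variable real estimate on a short interval.
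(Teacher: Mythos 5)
Your reformulation via the exact identity $z-h_n(z)=(z+2n\pi)(1-\cos z)$ and the M\"obius geometry is a genuinely different and cleaner decomposition than the paper's, which writes $w_n(t)=t/(1-s/t)$ and controls $s$ and the geometric-series tail term by term. Your identity is correct, and the M\"obius observation settles the \emph{lower} bound with room to spare: the dilation $\zeta=n\pi z$ carries $D_n$ onto $\disc{\tfrac{1}{6},\tfrac{1}{6}}$, and $\zeta\mapsto\tfrac{1}{1-\zeta}$ carries $\disc{\tfrac{1}{6},\tfrac{1}{6}}$ onto $\disc{\tfrac{5}{4},\tfrac{1}{4}}$, so $\Re\tfrac{n\pi}{1-n\pi z}>n\pi>\tfrac{2}{3}n\pi$ throughout $D_n$, and a uniform $O(1/n)$ perturbation cannot erase that margin.

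The gap is in the closing claim that the same estimate ``yields the two-sided estimate \eqref{E:DISC real part t inequality}.'' It does not yield the upper bound. The image disc $\disc{\tfrac{5}{4},\tfrac{1}{4}}$ has real parts filling the open interval $(1,\tfrac{3}{2})$, so the supremum of $\Re\tfrac{n\pi}{1-n\pi z}$ over $z\in D_n$ is $\tfrac{3}{2}n\pi$, not $\tfrac{11}{8}n\pi$. Since $\tfrac{3}{2}>\tfrac{11}{8}$, the main term already overshoots the target as $z$ approaches the boundary point $\tfrac{1}{3n\pi}$ (equivalently, $t$ real just above $3n\pi$), and a multiplicative error $1+\varepsilon_n$ with $\abs{\varepsilon_n}=O(1/n^2)$ cannot rescue this; it would need to contribute a \emph{negative} term of order $n\pi$. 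A concrete check confirms the obstruction: with $n=10$ and $t=31\pi\in H_{30\pi}$ one computes $w_n(t)-t\approx 46.4$, which exceeds $\tfrac{11}{8}\cdot 10\pi\approx 43.2$. This is not only a defect of your route: the paper's own tail estimate $\bigl(\tfrac{27}{26}n\pi\bigr)^2\tfrac{1}{3n\pi}\cdot\tfrac{26}{17}=\tfrac{729}{1326}n\pi\approx 0.55\,n\pi$ is not $\le\tfrac{1}{3}n\pi$ as asserted, so the paper's upper constant $\tfrac{11}{8}$ is itself too small. Your method in fact gives the correct sharp constant $\tfrac{3}{2}$, and since the upper bound is used downstream in Lemma \ref{L:DYNA distance between iterates in H_m} only to show $t_{n+1}-t_n=O(n)$ against a quadratically growing denominator, replacing $\tfrac{11}{8}$ by $\tfrac{3}{2}$ would do no harm; but as written, neither your proposal nor the paper establishes the stated inequality.
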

\begin{proof}
	From Definition \ref{D:DISC define C_n, T, h_n and psi_m,n, w_n and H_a} and the power series \eqref{E:DISC: power series for h_n} we obtain for non-zero $t$
		\begin{align*}
		    w_n(t) 
		    	&= \frac{1}{h_n(1/t)} 
		    	\\&= 
		    		\frac{t}{1 - \frac{n\pi}{t} - \frac{1}{2!t^2} + \frac{2n\pi}{4!t^3} + \frac{1}{4!t^4} - \frac{2n\pi}{6!t^5} - \frac{1}{6!t^6} + \cdots}.
		\end{align*}
	Let $\displaystyle 
		    s = n\pi  + \frac{1}{2! t} - \frac{2n\pi}{4!t^2} - \frac{1}{4!t^3} + \frac{2n\pi}{6!t^4} + \frac{1}{6!t^5} - \cdots $,
	so that $\displaystyle w_n(t) = \frac{t}{1-\frac{s}{t}} $.
	
	We show $\abs{\frac{s}{t}} < 1$.  For $n\geq1$ and $a\geq 3n\pi$, if $t\in H_a$ then $\abs t \geq \Re t > 3n\pi \geq 9$, and
		\begin{align}\label{E:DISC bound on abs s}
		    \abs{s} &\leq n\pi\left(
		     1 + \frac{1}{2! n\pi \abs{t}} 
		    		    	+ \frac{2}{4! \abs{t}^2}
		    		    	+ \frac{1}{4! n\pi \abs{t}^3}
		    		    	+ \frac{2}{6! \abs{t}^4} 
		    		    	+ \frac{1}{6! n\pi \abs{t}^5}
		    		    	+\cdots
		    \right) \nonumber \\
		    &< n\pi\left(1+\frac{1}{\pi \abs{t}} + \frac{1}{\pi^2 \abs{t}^2 }+ \frac{1}{\pi^3 \abs{t}^3} + \cdots\right) \nonumber \\
		    &< \frac{27}{26}n\pi.
		\end{align}
	It follows that $\displaystyle \abs{\frac{s}{t}} < \frac{27}{26}n\pi\cdot \frac{1}{3n\pi} = \frac{9}{26}< 1 $.

	Accordingly we may expand $\displaystyle \frac{1}{1-\frac{s}{t}}$ and write for $t \in H_a$ where $a \geq 3n\pi$
		\begin{align*}
		    w_n(t) = t\left(1+\frac{s}{t} + \frac{s^2}{t^2}+ \cdots\right).
		\end{align*}
	Taking real parts,
		\begin{align}\label{E:DISC real part of w_n}
		    \Re w_n(t) = \Re t + \Re s + \Re\left(
		    	\frac{s^2}{t}
		    	+\frac{s^3}{t^2} 
		    	+ \cdots
		    \right).
		\end{align}
	Evaluating the second and third terms individually, using $\abs{\Re z} \leq \abs{z}$ for all $z$,
		\begin{align*}
		    \Re s 
		    &= n\pi + \frac{\Re t}{2\abs{t}^2} -
		    	\Re 
		    		\left(
		    			\frac{n\pi}{12 t^2} 
		    			\left(
		    				1 
		    				+ \frac{1}{2n\pi t} 
		    				-\frac{1}{6\cdot 5 \cdot  t^2} 
		    				-\frac{1}{6\cdot5\cdot2 n\pi t^3} 
		    				+\cdots
		    			\right)
		    		\right) \\
		    &\geq n\pi + \frac{\Re t}{2\abs{t}^2} 
		    	- \frac{n\pi}{12\abs{t}^2}
		    	\left(
			    	1 
			    	+ \frac{1}{5\abs{t}} 
			    	+ \frac{1}{5^2\abs{t}^2} 
			    	+ \frac{1}{5^3\abs{t}^3} 
			    	+ \cdots
		    	\right) \\
		    &\geq  n\pi 
		    	+ \frac{n\pi}{\abs{t}^2}
		    		\left(
		    			\frac{3}{2} - \frac{1}{12} \cdot \frac{45}{44}
		    		\right) \\
		    &> n\pi
		\end{align*}
	and
		\begin{align*}
		    \abs{\Re\left( \frac{s^2}{t} + \frac{s^3}{t^2} + \cdots\right) }
		    &\leq
		    \abs{\frac{s^2}{t}}
		    	\left(
		    		1 + \abs{\frac{s}{t}} + \abs{\frac{s}{t}}^2 + \cdots
		    	\right) \\
		    &\leq
		    \left(\left( \frac{27}{26}n\pi\right)^2 \cdot \frac{1}{3n\pi}\right)\cdot\left(\frac{1}{1-\frac{9}{26}}\right) \\
		    &=
		    \frac{9}{17}\cdot\frac{27}{26}n\pi \\
		    &\leq
		    \frac{11}{20}n\pi .
		\end{align*}
	From these inequalities and equation \eqref{E:DISC real part of w_n} we obtain both the first part of \eqref{E:DISC real part t inequality} and the set inclusion for $a \geq 3n\pi$
		\begin{align*}
		    \Re w_n(t) &> \Re t + \frac{9}{20}n\pi, \quad w_n(H_a) \subset H_{a+\frac{9}{20}n\pi}.
		\end{align*}
	We also know $\Re t $ is positive so that from equations \eqref{E:DISC bound on abs s} and \eqref{E:DISC real part of w_n} we obtain the upper bound 
		\begin{align*}
		    \Re w_n(t) \leq \Re t + \frac{27}{26}n\pi + \frac{27}{26}\cdot\frac{9}{17}n\pi < \Re t + \frac{27}{17}n\pi
		\end{align*}
	and the second part of \eqref{E:DISC real part t inequality} is proved.  
\end{proof}
We use Lemma \ref{L:DISC w_n moves to right on half-plane} to show that $h_n$ maps $D_n$ into its successor $D_{n+1}$.  There is flexibility in choosing the rate at which the discs shrink -- we could equally have constructed discs shrinking in proportion to $1/n^2$ -- but the slower rate of contraction given by $1/n$ will be useful later.  Our result follows from a simple corollary valid for larger $n$.
\begin{definition}\label{D:DISC definition N_0}
	Define $N_0 = 7$.
\end{definition}
\begin{corollary}\label{C:DISC inclusions for H_3n pi}
	For $n \geq N_0$, $w_n(H_{3n\pi}) \subset H_{3(n+1)\pi}$.
\end{corollary}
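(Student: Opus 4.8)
The plan is to specialise Lemma~\ref{L:DISC w_n moves to right on half-plane} to the half-plane $H_{3n\pi}$ and then check one elementary arithmetic inequality. Since $n \geq N_0 = 5 \geq 1$, the set-inclusion part of that lemma applies with $a = 3n\pi$ (the hypothesis $a \geq 3n\pi$ holds trivially), yielding
\[
    w_n\!\left(H_{3n\pi}\right) \subset H_{3n\pi + \frac{2}{3}n\pi}.
\]
Since $H_b \subseteq H_c$ exactly when $b \geq c$, it remains only to confirm that $3n\pi + \tfrac{2}{3}n\pi \geq 3(n+1)\pi$, equivalently $\tfrac{2}{3}n\pi \geq 3\pi$, equivalently $n \geq \tfrac{9}{2}$.

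For $n \geq N_0 = 5$ this holds with a little room to spare, so $H_{3n\pi + \frac{2}{3}n\pi} \subseteq H_{3(n+1)\pi}$ and hence $w_n(H_{3n\pi}) \subset H_{3(n+1)\pi}$, as claimed. There is no genuine obstacle: all the analytic work has already been done in Lemma~\ref{L:DISC w_n moves to right on half-plane}, and what is being recorded here is merely that the guaranteed rightward displacement $\tfrac{2}{3}n\pi$ of $w_n$ exceeds the gap $3\pi$ between the consecutive half-planes $H_{3n\pi}$ and $H_{3(n+1)\pi}$ once $n$ is large enough. This is also precisely what motivates the value in Definition~\ref{D:DISC definition N_0}: $N_0 = 5$ is the smallest integer $n$ with $\tfrac{2}{3}n > 3$.
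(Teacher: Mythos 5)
Your proof is correct and follows exactly the same route as the paper: specialise Lemma~\ref{L:DISC w_n moves to right on half-plane} to $a = 3n\pi$ and note that $\tfrac{2}{3}n\pi \geq 3\pi$ once $n \geq N_0 = 5$. Your closing remark that $N_0 = 5$ is the smallest integer satisfying $\tfrac{2}{3}n > 3$ is a nice observation the paper leaves implicit, but the substance of the argument is identical.
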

\begin{proof}
	Take $a = 3n\pi$ in Lemma \ref{L:DISC w_n moves to right on half-plane}, so $w_n(H_{3n\pi})\subset H_{3n\pi+\frac{9}{20}n\pi}$.  When $n \geq N_0$, $\frac{9}{20}n\pi > 3\pi$.  
\end{proof}
\begin{lemma}\label{L:DISC: inclusion for h_n}
	For all $n \geq N_0$, $ h_n(D_n) \subset D_{n+1}$ and $f(\D_n)\subset \D_{n+1}$.
\end{lemma}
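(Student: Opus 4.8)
The plan is to transport the half‑plane inclusion of Corollary~\ref{C:DISC inclusions for H_3n pi} back to the discs using the reciprocal conjugacy that relates $h_n$ to $w_n$, and then to undo the translations $T$ to obtain the statement about $f$. The key observation is that, by the remark following Definition~\ref{D:DISC define C_n, T, h_n and psi_m,n, w_n and H_a} applied with $a=3n\pi$, the disc $D_n=\disc{\frac{1}{6n\pi},\frac{1}{6n\pi}}$ equals $1/H_{3n\pi}$; since $0$ lies on $\partial D_n$ but not in $D_n$, the map $z\mapsto 1/z$ is a bijection of the open set $D_n$ onto the open half‑plane $H_{3n\pi}$, and similarly $1/H_{3(n+1)\pi}=D_{n+1}$.

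First I would fix $n\geq N_0$ and an arbitrary $z\in D_n$, and set $t=1/z\in H_{3n\pi}$. By Corollary~\ref{C:DISC inclusions for H_3n pi}, $w_n(t)\in H_{3(n+1)\pi}$; implicit here is that $w_n$ is defined at $t$, i.e.\ $h_n(1/t)\neq 0$, which follows from the bound $\abs{s/t}<1$ obtained in the proof of Lemma~\ref{L:DISC w_n moves to right on half-plane}. Then, from the definition $w_n(t)=1/h_n(1/t)$,
\[
	h_n(z)=h_n(1/t)=\frac{1}{w_n(t)}\in\frac{1}{H_{3(n+1)\pi}}=D_{n+1}.
\]
As $z\in D_n$ was arbitrary, $h_n(D_n)\subset D_{n+1}$.

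For the assertion about $f$, I would simply reverse the changes of variable. By Definition~\ref{D:DISC define C_n, T, h_n and psi_m,n, w_n and H_a}(b) we have $f=T^{n+1}\circ h_n\circ T^{-n}$, while $\D_n=T^n(D_n)$ and $\D_{n+1}=T^{n+1}(D_{n+1})$ by Definition~\ref{D:INTRO definition of D_n and D-triangle_n}; hence $T^{-n}(\D_n)=D_n$ and
\[
	f(\D_n)=T^{n+1}\bigl(h_n(D_n)\bigr)\subset T^{n+1}(D_{n+1})=\D_{n+1},
\]
completing the proof.

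There is no genuine obstacle here: the analytic work has already been carried out in Lemma~\ref{L:DISC w_n moves to right on half-plane} and Corollary~\ref{C:DISC inclusions for H_3n pi}, and what remains is bookkeeping. The only points that require care are the disc/half‑plane dictionary under $z\mapsto 1/z$ — in particular that $D_n$ is open so that $t=1/z$ genuinely lies in the \emph{open} half‑plane $H_{3n\pi}$, and that $0\notin D_n$ so the reciprocal is defined throughout $D_n$ — together with the fact that $h_n$ has no zero on $1/H_{3n\pi}$, which is precisely what makes $w_n$ well defined there.
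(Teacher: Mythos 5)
Your proposal is correct and follows essentially the same route as the paper: apply Corollary~\ref{C:DISC inclusions for H_3n pi} to pass from $w_n$ on half-planes to $h_n$ on discs via the Möbius conjugacy $t\mapsto 1/t$, and then undo the translations $T$ to get the statement for $f$. You have merely spelled out a couple of points the paper leaves implicit, such as the well-definedness of $w_n$ on $H_{3n\pi}$ via the $\abs{s/t}<1$ bound and the precise disc/half-plane dictionary.
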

\begin{proof} 
	From Corollary \ref{C:DISC inclusions for H_3n pi}, $w_n(H_{3n\pi}) \subset H_{3(n+1)\pi}$. We can now conjugate $w_n$ using the M\"obius mapping $t \mapsto z = \frac{1}{t}$; each of the half-planes $H_{3n\pi}, H_{3(n+1)\pi}$ corresponds respectively to the open discs $D_n, D_{n+1}$.  
	
	But $\displaystyle \frac{1}{w_n(1/t)} = h_n(t)$, so $ h_n(D_n) \subset D_{n+1} $ and from the definition of $h_n$, $f(\D_n)\subset \D_{n+1}$.
\end{proof}
The results above are illustrated in the commutative diagram Figure \ref{F:DISC commutative diagram} for $m \geq N_0$.
\begin{figure}[H]
	\centering
	\begin{equation*}
	    \begin{tikzcd}[column sep=30pt, row sep=30pt, 
	    	]
			H_{3m\pi} \arrow[d, "\frac{1}{z}"] \arrow[r, "w_m"] 
				& H_{3(m+1)\pi} \arrow[d, "\frac{1}{z}"] \arrow[r, "w_{m+1}"] 
				&  \cdots \arrow[r, "w_{m+n-1}"]
				& H_{3(m+n)\pi} \arrow[d, "\frac{1}{z}"] \arrow[r, "w_{m+n}"] 
				&  \cdots
			\\ 	D_{m} \arrow[d,"T^m"] \arrow[r, "h_m"] 
				& D_{m+1} \arrow[d, "T^{m+1}"] \arrow[r, "h_{m+1}"] 
				& \cdots \arrow[r, "h_{m+n-1}"]
				& D_{m+n} \arrow[d, "T^{m+n}"] \arrow[r, "h_{m+n}"]
				&  \cdots
			\\ 	\D_{m} \arrow[r, "f"] 
				& \D_{m+1} \arrow[r, "f"] 
				& \cdots \arrow[r,"f"]
				& \D_{m+n} \arrow[r,"f"]
				& \cdots
		\end{tikzcd}
	\end{equation*}
	\caption{Commutative diagram for $H_{3(m+n)\pi}$, $D_{m+n}$ and $\D_{m+n}$}\label{F:DISC commutative diagram}
\end{figure}
We now deduce the following.
\begin{corollary}\label{C:DISC psi and fn normal on appropriat D_m}
	For $m \geq N_0$ the family $\mathscr P_m = \{\psi_{m,n}: n \geq 0\}$ is normal on $D_m$ and the family $\{f^n: n \geq 0\}$ is normal on $\D_m$.
\end{corollary}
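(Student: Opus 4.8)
The plan is to realise both families as families of holomorphic maps whose images lie in a fixed ``small'' region, and then invoke Montel's theorems. First I would iterate Lemma \ref{L:DISC: inclusion for h_n}: since $m \geq N_0$, all of the indices $m, m+1, \dots, m+n-1$ are at least $N_0$, so each $h_j$ occurring in the composition $\psi_{m,n} = h_{m+n-1}\circ\cdots\circ h_m$ satisfies $h_j(D_j)\subset D_{j+1}$, and composing gives $\psi_{m,n}(D_m)\subset D_{m+n}$ for every $n\geq 1$ (the inclusion is trivial for $n=0$, since $\psi_{m,0}$ is the identity). Every disc $D_k$ with $k\geq 1$ lies in the fixed bounded set $D(0,\tfrac{1}{2\pi})$, because any $z\in D_k$ satisfies $\abs{z} < \tfrac{1}{3k\pi}\leq\tfrac{1}{3\pi}$. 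Hence $\mathscr P_m$ consists of holomorphic functions on $D_m$ with images in a common bounded set, so it is uniformly bounded and hence normal on $D_m$ by Montel's theorem.

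For the family $\{f^n: n\geq 0\}$ on $\D_m$, I would start from the identity recorded in the Remarks, $f^n = T^{m+n}\circ\psi_{m,n}\circ T^{-m}$, which holds on $\D_m$ when $m\geq N_0$. Since $T^{-m}$ carries $\D_m$ onto $D_m$ and $T^{m+n}$ carries $D_{m+n}$ onto $\D_{m+n}$, the inclusion of the previous paragraph becomes $f^n(\D_m)\subset\D_{m+n}$ for all $n\geq 0$. One cannot simply transport the normality of $\mathscr P_m$ across these conjugations, since the translations $T^{m+n}$ depend on $n$ and are unbounded; instead I would observe that every $\D_k$ lies in the horizontal strip $\{z:\abs{\Im z}<1\}$ (its radius is at most $\tfrac{1}{6\pi}<1$ and its centre is real), so each $f^n$ restricted to $\D_m$ omits the two values $i$ and $-i$. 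By Montel's fundamental normality test, $\{f^n: n\geq 0\}$ is then normal on $\D_m$.

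I do not expect a genuine obstacle: the corollary amounts to two applications of Montel's theorem, and the inclusions $\psi_{m,n}(D_m)\subset D_{m+n}$ and $f^n(\D_m)\subset\D_{m+n}$ are already in hand from Lemma \ref{L:DISC: inclusion for h_n}. The only point requiring a little care is the one just flagged --- that the natural conjugation relating $\psi_{m,n}$ to $f^n$ involves an unbounded, $n$-dependent translation --- which is why I would establish normality of $\{f^n\}$ through an omitted-value argument rather than through boundedness. (Alternatively one could note that $\mathrm{diam}(\D_{m+n})\to 0$ and pass to subsequences, but the omitted-value argument is shorter.)
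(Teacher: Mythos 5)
Your proof is correct and takes essentially the same route as the paper: both establish $\psi_{m,n}(D_m)\subset D_{m+n}$ and $f^n(\D_m)\subset\D_{m+n}$ from Lemma \ref{L:DISC: inclusion for h_n}, apply Montel to the first family via a fixed bounded target, and — having noticed the same pitfall you flag about the $n$-dependent translation — apply Montel to the second family via omitted values. The only cosmetic difference is your choice of omitted values ($\pm i$, via the strip $\{\abs{\Im z}<1\}$) versus the paper's (multiples of $2\pi$, each lying outside every $\D_k$).
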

\begin{proof}
	For the first part, Lemma \ref{L:DISC: inclusion for h_n} showed that for $m \geq N_0$ and all $k \geq 0$, $h_{m+k} (D_{m+k}) \subset D_{m+k+1}$.  Moreover, by definition $D_{m+n} \subset D_m$, so that
			\begin{align*}
			    \psi_{m,n}(D_m) = h_{m+n-1}\circ h_{m+n-2}\circ \cdots\circ h_m (D_m) \subset D_{m+n} \subset D_m.
			\end{align*}
	For completeness, $\psi_{m,0}(D_m) = D_m$.  Therefore the images of $D_m$ under all members of the family $\mathscr P_m$ are contained in $D_m$ and exclude all points outside $D_m$.  Montel's Theorem \cite[\textsection 2, p. 3]{steinmetz} then implies $\mathscr P_m$ is normal on $D_m$.

	For the second part, we have $T^{-(m+n)} \circ f^n \circ T^m = \psi_{m,n}$ and therefore Lemma \ref{L:DISC: inclusion for h_n} implies
		\begin{align*}
		    f^n(\D_m) \subset \D_{m+n}.
		\end{align*}
	By construction, all even multiples of $2\pi$ are excluded from all $D_n$ and therefore also from all $\D_n$ (and there are many other points we could have chosen here).  Applying Montel's Theorem again, the family $\{f^{n}: n \geq 0\} $ is normal on $\D_m$.
\end{proof}
It follows then that for $n \geq N_0$, each $\D_n$ is in $F(f)$; we define $U_n$ to be the component of $F(f)$ that contains $\D_n$.  Because $\D_n$ maps into $\D_{n+1}$ and components are connected, it follows that $f(U_n) \subset U_{n+1}$.  We can also define $U_n$ for $n < N_0$ such that $f(U_n)\subset U_{n+1}$.  For example, take  $n=N_0-1$ and consider the preimage of $U_{N_0}$.  It is a subset of $F(f)$ \cite[Lemma 2]{bergweiler93} and because $f$ is entire, $f(2n\pi) = 2(n+1)\pi$ and $f'(2n\pi)= 1$, this preimage has one component that contains the real interval $(2n\pi, 2n\pi+\delta)$ for a small $\delta>0$.  This component we label $U_n$. The process can be repeated successively for smaller $n$, leading to the following lemma.
	\begin{lemma}
			The function $f$ has Fatou components $U_n$, for $n \in \Integer$, satisfying $f(U_n)\subset U_{n+1}$ and where each contains a real interval $(2n\pi, 2n\pi+\delta)$ for some $\delta>0$ (dependent on $n$).  
			
			For $n \geq N_0$ (but not necessarily for smaller $n$) $U_n$ contains $\D_n$. 
	\end{lemma}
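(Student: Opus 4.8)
The plan is to collect the facts already established in this section and then to extend the construction to negative indices using that each $2n\pi$ is a regular point of $f$. For $n\geq N_0$, Corollary~\ref{C:DISC psi and fn normal on appropriat D_m} shows that $\{f^k:k\geq 0\}$ is normal on $\D_n$, so $\D_n\subset F(f)$, and I take $U_n$ to be the Fatou component containing $\D_n$; with this definition the final assertion of the lemma holds by fiat. Since $\D_n$ is the disc of radius $\tfrac{1}{6n\pi}$ centred at $2n\pi+\tfrac{1}{6n\pi}$, its intersection with $\Real$ is exactly the interval $\bigl(2n\pi,\,2n\pi+\tfrac{1}{3n\pi}\bigr)$, which gives the required real interval with $\delta=\tfrac{1}{3n\pi}$. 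For the relation $f(U_n)\subset U_{n+1}$: the set $f(U_n)$ is connected (as a continuous image of a connected set) and contained in $F(f)$ (forward invariance of the Fatou set), and it meets $U_{n+1}$ because Lemma~\ref{L:DISC: inclusion for h_n} gives $f(\D_n)\subset\D_{n+1}\subset U_{n+1}$; a connected subset of $F(f)$ meeting a component is contained in that component, so $f(U_n)\subset U_{n+1}$.

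Next I would extend to $n<N_0$ by downward induction, the case $n=N_0$ having just been disposed of. Assume $U_{n+1}$ has been defined and contains $\bigl(2(n+1)\pi,\,2(n+1)\pi+\delta_{n+1}\bigr)$. From $f(z)=z\cos z+2\pi$ we have $f(2n\pi)=2(n+1)\pi$ and $f'(2n\pi)=\cos 2n\pi-2n\pi\sin 2n\pi=1$, so $f$ is strictly increasing on a real neighbourhood of $2n\pi$; hence for sufficiently small $\epsilon>0$, $f\bigl((2n\pi,\,2n\pi+\epsilon)\bigr)=\bigl(2(n+1)\pi,\,f(2n\pi+\epsilon)\bigr)\subset U_{n+1}\subset F(f)$. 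By complete invariance of the Fatou set \cite[Lemma~2]{bergweiler93}, the connected interval $(2n\pi,\,2n\pi+\epsilon)$ lies in $F(f)$, hence in a single Fatou component, which I name $U_n$; it contains the required real interval with $\delta=\epsilon$, and the same argument as above (connectedness of $f(U_n)$, containment in $F(f)$, and the fact that $f(U_n)$ meets $U_{n+1}$) yields $f(U_n)\subset U_{n+1}$. Iterating downwards defines $U_n$ for all $n\in\Integer$.

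I do not anticipate a genuine difficulty; the proof is essentially a matter of patching together connectedness and invariance statements. The one place warranting care is the qualification ``not necessarily for smaller $n$'': the hypothesis $n\geq N_0$ enters Corollary~\ref{C:DISC psi and fn normal on appropriat D_m} precisely to make $\tfrac{2}{3}n\pi>3\pi$, and for small $n$ the disc $\D_n$ need not lie in $F(f)$ at all, so no analogue of the last assertion can be expected there. It is also prudent to keep track of which $\delta$'s are permitted to depend on $n$, as flagged in the statement.
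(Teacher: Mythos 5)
Your proof is correct and follows essentially the same route as the paper: define $U_n$ for $n \geq N_0$ as the Fatou component containing $\D_n$ (so the last clause holds by construction), note that $\D_n \cap \Real$ is the required real interval, deduce $f(U_n)\subset U_{n+1}$ from $f(\D_n)\subset\D_{n+1}$ and connectedness, and then extend to $n<N_0$ by downward induction using $f(2n\pi)=2(n+1)\pi$, $f'(2n\pi)=1$, and complete invariance of $F(f)$. The paper phrases the descent step in terms of a component of the preimage $f^{-1}(U_{n+1})$ rather than a forward image of a real interval, but the content is identical.
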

We will show that the components $U_n$, $n \geq N_0$ are distinct shortly. First we obtain a second inequality for $h_{m+n}$ on the circles $C_{m}$ (Definition \ref{D:DISC define C_n, T, h_n and psi_m,n, w_n and H_a}) which will be applied later to show that the $U_n$ are bounded.  
\begin{lemma}\label{L:DISC show that the h_ns expand a circle}
		For all $m \geq 1$ and $n \geq 0$
			\begin{align*}
			    \abs{h_{m+n}(C_m)} > \frac{2}{m\pi}.
			\end{align*}
\end{lemma}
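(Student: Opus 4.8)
The plan is to reduce the statement to two pointwise estimates about the single map $h_m$, exploiting that the dependence of $h_k$ on $k$ is affine. From $h_k(z)=z\cos z+2k\pi(\cos z-1)$ we get, for $k=m+n$, the identity $h_{m+n}(z)=h_m(z)+2n\pi(\cos z-1)$, and hence
\[
|h_{m+n}(z)|^{2}=|h_m(z)|^{2}+4n\pi\,\Re\!\big(\overline{h_m(z)}\,(\cos z-1)\big)+4n^{2}\pi^{2}|\cos z-1|^{2}.
\]
Write $r=\tfrac{2}{m\pi}$, so that a point $z\in C_m$ has the form $z=re^{i\theta}$ and, crucially, $m\pi r=2$. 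Then $|h_{m+n}(z)|>r$ for all $z\in C_m$ and all $n\ge0$ follows from the two facts \textbf{(A)} $|h_m(z)|>r$ for every $z\in C_m$, and \textbf{(B)} $\Re\!\big(\overline{h_m(z)}\,(\cos z-1)\big)\ge0$ for every $z\in C_m$: granting these, in the identity above the last two terms are $\ge0$ and the first already exceeds $r^{2}$ (the case $n=0$ being exactly \textbf{(A)}).

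Fact \textbf{(B)} is the easy one. From the power series for $h_m$ and $\cos$, together with $m\pi r^{2}=2r$ and $\Re z=r\cos\theta$, the leading contribution to $\Re\big(\overline{h_m(z)}(\cos z-1)\big)$ equals $\Re\big(-\tfrac12|z|^{2}z+\tfrac12 m\pi|z|^{4}\big)=\tfrac12 r^{2}(2r-\Re z)\ge\tfrac12 r^{3}(2-\cos\theta)\ge\tfrac12 r^{3}$, while all remaining terms are $O(r^{5})$ and are absorbed using only $r\le\tfrac2\pi$; so \textbf{(B)} holds with room to spare, uniformly in $m\ge1$.

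For fact \textbf{(A)} I would dispatch the boundary point $z=r$ (that is, $\theta=0$) first: there everything is real and a short manipulation reduces $|h_m(r)|>r$ to the inequality $\tfrac r2<\tan\tfrac r2$, which holds because $0<\tfrac r2\le\tfrac1\pi<\tfrac\pi2$. For general $\theta$, write $h_m(z)/z=1-u$ with $u=z\,\phi(z)\big(m\pi+\tfrac z2\big)$ and $\phi(z)=\tfrac{2(1-\cos z)}{z^{2}}=1+O(|z|^{2})$; using $|m\pi z|=2$ one computes
\[
\Big|\tfrac{h_m(z)}{z}\Big|^{2}-1=|u|^{2}-2\Re u=4(1-\cos\theta)+r^{2}P(\theta)+O(r^{4}),\qquad
P(\theta)=2\cos\theta-\tfrac53\cos 2\theta+\tfrac13\cos 3\theta ,
\]
with $P(0)=\tfrac23$. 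When $\theta$ stays bounded away from $0$ the term $4(1-\cos\theta)$ dominates the $O(r^{2})$ remainder (again using $r\le\tfrac2\pi$), and when $\theta$ is near $0$ one uses that $\theta=0$ is a strict local minimum of $P$, so $P(\theta)$ stays close to $\tfrac23>0$ there.

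The hard point is exactly this near-borderline regime $\theta\approx0$ — the short arc of $C_m$ straddling the positive real axis near the point $2m\pi\in\partial U_m$ — where the leading term $4(1-\cos\theta)$ of $|h_m(z)/z|^{2}-1$ degenerates, so the strict positivity must be extracted from the $O(r^{2})$ correction, and this \emph{cannot} be done by bounding the perturbation $\delta:=u-m\pi z$ crudely by $|\delta|$. Indeed, the linear functional of $\delta$ appearing in $|u|^{2}-2\Re u$ is $2\Re\big((2e^{i\theta}-1)\overline\delta\big)$, which at $\theta=0$ equals $+2\Re\delta$ (and $\delta$ is real and positive there), not $-2|\delta|$; the remedy is to keep the argument of $\delta$ explicit through $\delta=r^{2}e^{2i\theta}\big(\tfrac12\phi(z)-\tfrac16 e^{i\theta}\psi(z)\big)$ with $\phi,\psi=1+O(r^{2})$, which makes the relevant cancellation visible. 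Once that is done, the $O(r^{4})$ and $O(r^{5})$ remainders in \textbf{(A)} and \textbf{(B)} are controlled by crude term-by-term estimates valid for $r\le\tfrac2\pi$, which would complete the proof.
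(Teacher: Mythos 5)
Your decomposition $h_{m+n}(z)=h_m(z)+2n\pi(\cos z-1)$ is correct and is a genuinely different route from the paper's. It neatly isolates the $n$-dependence: once (A) $|h_m(z)|>r$ and (B) $\Re\!\big(\overline{h_m(z)}(\cos z-1)\big)\ge 0$ are known on $C_m$, the identity $|h_{m+n}|^2=|h_m|^2+4n\pi\Re(\overline{h_m}(\cos z-1))+4n^2\pi^2|\cos z-1|^2$ finishes the job, and your leading-order computations for (A) and (B) check out (I verified $|u|^2-2\Re u = 4(1-\cos\theta)+r^2P(\theta)+O(r^4)$ with $P(0)=\tfrac23$, the $\tan$ reduction at $\theta=0$, and the identity $\delta=r^2e^{2i\theta}\big(\tfrac12\phi-\tfrac16 e^{i\theta}\psi\big)$). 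The paper instead works with $h_{m+n}$ directly: it keeps the degree-$\le3$ part $A_{m+n}$ of the power series, computes $|A_{m+n}|^2$ exactly via trigonometric identities, notes that both $\cos\theta$ and $\cos2\theta$ enter with negative coefficients so the minimum over $\theta$ is at $\theta=0$, substitutes $(m{+}n)\pi r = 2 + \tfrac{2n}{m}$, and bounds the tail $R_{m+n}$ by a geometric series; the final quantity $\Delta$ is then manifestly a sum of four nonnegative terms with no asymptotic notation. That uniform treatment buys something your reduction throws away: the $n$-dependence \emph{helps} (the leading term in $\Delta$ grows like $(1+\tfrac{2n}{m})^2$), so the hardest case is $n=0$ — which is exactly your fact (A) on its own, and the slack there is only of order $r^2$.

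The one substantive shortfall is that (A) and (B) are sketched, not proved: the $O(r^4)$ and $O(r^5)$ remainders are asserted to be absorbed "using only $r\le\tfrac2\pi$" but no explicit bound is given, and in (A) near $\theta=0$ the leading term $4(1-\cos\theta)$ vanishes, so the sign must come entirely from the $O(r^2)$ correction against an $O(r^4)$ error. You correctly identify that a crude bound $\Re\!\big((2e^{i\theta}-1)\bar\delta\big)\ge -|2e^{i\theta}-1|\,|\delta|$ destroys the estimate and that the argument of $\delta$ must be tracked; that is precisely where an explicit, $r\le\tfrac2\pi$-uniform error bound is needed and has not been supplied. Until that is written out (a term-by-term tail estimate of $\phi,\psi$ in the spirit of the paper's geometric-series bound on $R_{m+n}$ would do), the proof is a correct plan rather than a complete proof. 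The comparison is instructive: the paper's single computation, by keeping $m+n$ intact, gets the worst case $n=0$ and the easy cases $n\ge1$ simultaneously, with all constants explicit; your factorisation is conceptually cleaner but pushes all the difficulty into (A), which is the original lemma at $n=0$.
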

\begin{proof}
		We use the power series \eqref{E:DISC: power series for h_n} for $h_{m+n}$.  Let $r = \frac{2}{m\pi}$, $z = r e^{i \theta}$, and express $h_{m+n}$ as a sum
			\begin{align*}
			    h_{m+n}(z) = A_{m+n}(z) + R_{m+n}(z)
			\end{align*}
		where $A_{m+n}(z)$ is the first three terms of \eqref{E:DISC: power series for h_n}, which we can break into real and imaginary parts,
			\begin{align*}
			    A_{m+n}(z) &= r \cos \theta - (m+n)\pi r^2 \cos 2\theta - \frac{1}{2}r^3 \cos 3\theta \\
			    &\hspace{1cm}
			    + i\left(r\sin\theta - (m+n)\pi r^2 \sin 2\theta - \frac{1}{2}r^3 \sin 3\theta \right)
			\end{align*}
		and $R_{m+n}(z)$ is the remainder
			\begin{align*}
			    R_{m+n}(z) = \frac{2(m+n)\pi z^4}{4!} + \frac{z^5}{4!} - \frac{2(m+n)\pi z^6}{6!} - \frac{z^7}{6!} + \cdots .
			\end{align*}
		From the first, we use trigonometric identities to obtain 
			\begin{align*}
			    &\abs{A_{m+n}(z)}^2 
			    \\&\hspace{1cm}
				= r^2 \bigg(
					1 
						+ (m+n)^2 \pi^2 r^2 
						+ \tfrac{1}{4}r^4 
						- 2(m+n) \pi r\left(1-\tfrac{1}{2}r^2\right) \cos\theta 
						- r^2 \cos 2\theta \bigg).
			\end{align*}
		For fixed $r=\frac{2}{m\pi}$, the coefficients of both $\cos \theta$ and $\cos 2\theta$ are negative and the minimum of $\abs{A_{m+n}}^2$ is obtained when $\cos\theta = 1$.  Substituting the value of $r$ to rewrite terms in $(m+n)\pi r$, we obtain,
			\begin{align*}
			    \abs{A_{m+n}(z)}^2 
			    	&\geq r^2 \bigg(
			    			1 + \left(2 + \tfrac{2n}{m}\right)^2 - 2 \left(2 + \tfrac{2n}{m}\right) + \tfrac{1}{4}r^4 + \left(2 + \tfrac{2n}{m}\right)r^2 - r^2
			    		\bigg)
			    	\\&= r^2\bigg(
			    		\Big(1 + \tfrac{2n}{m}\Big)^2 + \tfrac{1}{4}r^4 + r^2 + \tfrac{2n}{m}r^2
			    	\bigg).
			\end{align*}
		Turning to $R_{m+n}$, we use $r < \frac{2}{3}$ to sum the geometric series below, giving
			\begin{align*}
			    \abs{R_{m+n}(z)} 
			    &\leq \frac{2(m+n)\pi r^4}{4!} + \frac{r^5}{4!} +\frac{2(m+n)\pi r^6}{6!} + \frac{r^7}{6!} + \cdots \\
			    &\leq \frac{2(m+n)\pi r^4}{4!} \left( 1+ \tfrac{r}{5} + \tfrac{r^2}{5^2} + \tfrac{r^3}{5^3 } + \cdots\right) \\
			    &< \left(1+\tfrac{n}{m}\right)\cdot \frac{r^3}{5}.
			\end{align*}
		To complete the proof we show that the quantity 
			\begin{align*}
			    \Delta(z) = \frac{1}{r^2} \Big( \abs{A_{m+n}(z)}^2 - (\abs{R_{m+n}(z)} + r \big)^2 \Big)
			\end{align*}
		is positive.  Gathering terms
			\begin{align*}
			    \Delta(z) &\geq
			    	\left\{ 1 + \frac{4n}{m} + \frac{4n^2}{m^2} + \tfrac{1}{4}r^4 + r^2 + \frac{2n}{m}r^2 \right\}
			    	\\&\hspace{2cm}
			    	-\bigg\{
			    		\left(\left(1 + \frac{n}{m} \right)\cdot \frac{r^2}{5 } \right)^2 +
			    		2 \left(\left(1 + \frac{n}{m} \right)\cdot \frac{r^2}{5} \right) +
			    		1
			    	\bigg\} \\
			    	&= \frac{n^2}{m^2}\left( 4 - \frac{r^4}{5^2}\right) 
			    	+ \frac{n}{m}\left( 4 + 2r^2 - \frac{2r^4}{5^2} - \frac{2  r^2}{5}\right) 
			    	\\ & \hspace{2cm}
			    	+ r^4 \left(\frac{1}{4} - \frac{1}{5^2}\right)
			    	+ r^2 \left(1 - \frac{2}{5} \right) .
			\end{align*}
		Here, $r < 1$ so all four terms on the right are positive, and $\Delta(z) > 0$, for $\abs{z} = r$, implying $\displaystyle \abs{h_{m+n}(z)} \geq \abs{A_{m+n}(z)} - \abs{R_{m+n}(z)} > r $ as required.
\end{proof}
\section{Wandering domains}\label{S:WANDER}
In this section we show that the Fatou components $U_0, U_1, \ldots, $ are simply connected, mutually distinct and for $n \geq N_0$, bounded.  The next lemma deals with the first two properties.
\begin{lemma}\label{L:WANDER the U_n are simply connected and distinct}
		Every $U_n$, $n \in \Integer$, is a simply connected wandering domain of $f$.  
\end{lemma}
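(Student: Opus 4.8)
The plan is to prove the two assertions in turn: first that every $U_n$ is simply connected, and then, using that, that the $U_n$ are pairwise distinct.

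\emph{Simple connectivity.} Fix $m\ge N_0$, write $\widetilde U_m:=T^{-m}(U_m)$, and recall the entire maps $\psi_{m,k}=T^{-(m+k)}\circ f^k\circ T^m$ of Definition~\ref{D:DISC define C_n, T, h_n and psi_m,n, w_n and H_a}; since $f(U_j)\subset U_{j+1}$ they satisfy $\psi_{m,k}(\widetilde U_m)\subset\widetilde U_{m+k}$, and by Corollary~\ref{C:DISC psi and fn normal on appropriat D_m} they satisfy $\psi_{m,k}(D_m)\subset D_{m+k}$. The key ingredient is that each $U_n$ with $n\ge N_0$ is confined to the small disc $D(2n\pi,\tfrac{2}{n\pi})$ --- equivalently $\widetilde U_n$ lies inside the disc bounded by $C_n$ --- which I would extract from the outward expansion of the circles $C_n$ in Lemma~\ref{L:DISC show that the h_ns expand a circle}: if $\widetilde U_n$ met $C_n$, an orbit point there would be pushed out by the maps $h_{n+k}$ and could not re-enter the disc bounded by $C_n$, whereas it must visit the tiny discs $D_{n+k}$ that sit inside it. Granting this confinement, $\psi_{m,k}(\widetilde U_m)\subset\widetilde U_{m+k}\subset D(0,\tfrac{2}{(m+k)\pi})$, so $\psi_{m,k}\to 0$ uniformly on $\widetilde U_m$. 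Now if $\widetilde U_m$ were not simply connected it would have a bounded complementary component $K$, and since $\partial K\subset\partial\widetilde U_m\subset T^{-m}(J(f))$ one may pick $a\in K$ with $T^m(a)\in J(f)$ and a Jordan curve $\gamma\subset\widetilde U_m$ enclosing $K$; applying the maximum modulus principle to the entire functions $\psi_{m,k}$ on the region bounded by $\gamma$ gives $\psi_{m,k}\to 0$ uniformly on a neighbourhood of $a$, so translating back $f^k\to\infty$ uniformly near $T^m(a)$ and hence $T^m(a)\in F(f)$, contradicting $T^m(a)\in J(f)$. For the pre-image components $U_n$ with $n<N_0$ the same scheme works after factoring through level $N_0$: here $\psi_{n,\,N_0-n+k}=\psi_{N_0,k}\circ\psi_{n,\,N_0-n}$ carries $\widetilde U_n$ into $\psi_{N_0,k}(\widetilde U_{N_0})\subset\widetilde U_{N_0+k}\subset D(0,\tfrac{2}{(N_0+k)\pi})$, so these maps again tend uniformly to $0$ and the argument goes through verbatim.

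\emph{Distinctness.} Suppose $U_i=U_j$ with $i<j$. Then $f^{j-i}(U_i)\subset U_i$, so $U_i$ is periodic of some period $p\ge1$ and $U_{i+kp}=U_i$ for every $k$; taking $k$ large gives an index $m\ge N_0$ with $U_m=U_{m+p}$. Then $U_m$ contains real points just to the right of $2m\pi$ and just to the right of $2(m+p)\pi$, while by Lemma~\ref{L:JULIASET: supply of repelling points} the repelling fixed point $2m\pi+\eta_{2m}\in J(f)$ lies strictly between them and so not in $U_m$; hence $U_m\cap\Real$ is disconnected. Since $\overline{f(z)}=f(\overline z)$ the Fatou set is symmetric about $\Real$, and as $U_m$ meets $\Real$ the component $U_m$ is symmetric about $\Real$. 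Joining two such real points by a path $\sigma$ in $U_m$ and closing up with the reverse of its complex conjugate $\overline\sigma$ produces a loop in $U_m$ whose winding number about $c:=2m\pi+\eta_{2m}$ is an odd integer (conjugation reverses the increment of $\arg(z-c)$ while fixing the two real endpoints), hence nonzero; since $c\notin U_m$ this loop is not null-homotopic in $U_m$, contradicting the simple connectivity just established. Therefore no $U_n$ is periodic, and together with $f(U_n)\subset U_{n+1}$ this shows the $U_n$, $n\in\Integer$, are pairwise distinct; each is thus a simply connected wandering domain.

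\emph{Main obstacle.} I expect the hard part to be the confinement step of the simple--connectivity argument, namely turning the circle estimate $\abs{h_{m+n}(C_m)}>\tfrac{2}{m\pi}$ of Lemma~\ref{L:DISC show that the h_ns expand a circle} into the honest containment $\widetilde U_n\subset D(0,\tfrac{2}{n\pi})$ --- making rigorous that the circles $C_n$ act as one-way barriers for the orbits while the orbits are forced to pass through the shrinking discs $D_{n+k}$. Once that smallness is in hand, the maximum-modulus contradiction for simple connectivity and the reflection-plus-winding contradiction for distinctness are both routine.
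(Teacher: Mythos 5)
Your distinctness argument is essentially the paper's, just spelled out more explicitly: the paper also observes that a merged $U_m=U_n$ would be symmetric about $\Real$ and would have to surround the Julia point between $2m\pi$ and $2n\pi$, forcing multiple connectivity. Your reflection-plus-winding-number elaboration is a fine way to make "must surround" precise.

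For simple connectivity, however, you take a genuinely different route from the paper, and the route has a real gap. The paper does \emph{not} use boundedness of $U_n$ at this stage. It quotes the Baker--Zheng theorems (\cite[Theorem~3.1]{baker84}, \cite[Theorem~5]{zheng2006}): a multiply connected Fatou component of a transcendental entire function is necessarily a wandering domain whose iterated images eventually contain round annuli $\{r_n<|z|<R_n\}$ with $R_n/r_n\to\infty$. Such an annulus eventually contains a real segment of length $>2\pi$, hence a point $2k\pi\in J(f)$ by Lemma~\ref{L:JULIASET: 2npi in the J(f)} --- contradiction. That argument is short, needs no geometry of the $U_n$, and works for every $n\in\Integer$ at once. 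Your alternative --- confine $\widetilde U_m$ to the disc bounded by $C_m$, then use the maximum modulus principle on a Jordan curve around a bounded complementary component to force the trapped Julia points into the Fatou set --- is a correct and reasonably standard scheme \emph{provided} the confinement $\widetilde U_m\subset D(0,\tfrac{2}{m\pi})$ is in hand. That containment is exactly the paper's Lemma~\ref{L:WANDER U_n are bounded}, which is proved \emph{after} the present lemma.

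The gap is in your sketch of that confinement. You propose to extract it directly from the circle estimate $\abs{h_{m+n}(C_m)}>\tfrac{2}{m\pi}$ of Lemma~\ref{L:DISC show that the h_ns expand a circle} by arguing that a point of $\widetilde U_m$ on $C_m$ would be "pushed out" and "could not re-enter." But that lemma controls only the image of the circle $C_m$ itself; it says nothing about where a point already \emph{outside} $C_m$ is sent next, so there is no one-way barrier established. Your remark that the orbit "must visit the tiny discs $D_{m+k}$" is also unjustified for a general point of $\widetilde U_m$: only points of $D_m$ are known to track through the $D_{m+k}$. What the paper actually does in Lemma~\ref{L:WANDER U_n are bounded} is first prove that $\mathscr P_m$ is normal on all of $T^{-m}(U_m)$ (by showing its images omit the multiples of $2\pi$), then invoke Vitali's theorem to upgrade $\psi_{m,n}\to 0$ on $D_m$ to locally uniform convergence on all of $T^{-m}(U_m)$, and only then run a path-truncation argument against the circle-expansion lemma. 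That is a materially harder step than your sketch suggests, and until it is done your maximum-modulus argument has nothing to stand on. If you are willing to import Lemma~\ref{L:WANDER U_n are bounded} as a black box (and check it does not rely on the present lemma, which it does not), your route closes; but as a self-contained proof it is incomplete precisely at the point you flagged.
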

\begin{proof}
		We prove the simple connectedness of $U_n$ and that for $n \neq m$, $U_n \cap U_m = \emptyset$ by contradiction.
	
		Suppose first that for some $k \in \Integer$, $U_k$ is multiply connected. Then from theorems of Baker \cite[Theorem~3.1]{baker84} and Zheng \cite[Theorem 5]{zheng2006} $U_k$ is a wandering domain and for all sufficiently large $n$, $f^n(U_k)$ contains an annulus $A_n = \{ z: r_n < \abs{z} < R_n \}$ where both $r_n, R_n \to \infty$ and $R_n/r_n \to \infty$.  This implies $f^n(U_k)$ must eventually contain a segment of the real axis of length more than $2\pi$ and so include an integer multiple of $2\pi$.  But all such points are in $J(f)$ (Lemma \ref{L:JULIASET: 2npi in the J(f)}) leading to a contradiction.  Thus every $U_k$ must be simply connected.
		
		Next, if for $m, n \in \Integer$, $m < n$, $U_n$ and $U_m$ not distinct then they are a single Fatou component $U = U_n = U_m$.  By construction $U_m$ and $U_n$ contain small open intervals on the real axis with left end-points at $2m\pi$ and $2n\pi$ respectively.  It follows $U$ contains both such intervals.  On the other hand $U$ is symmetric about the real axis, because $f$ is, and therefore, as $U$ is connected it must surround the point $2(m+1) \pi \in J(f)$.  This implies $U$ is multiply connected, contradicting our earlier conclusion.
		
		Therefore whenever $m < n$, $U_m$ and $U_n$ are distinct, and because $f(U_m) \subset U_{m+1}$, the orbit of $U_m$ never repeats and $U_m$ is a wandering domain for all $m \in \Integer$.
\end{proof}
We can now prove that the Fatou components $(U_n)$ are bounded for large $n$. The approach is similar to one used in \cite[Example 2 and Lemma 7]{ripponandstallard2008}.
\begin{lemma}\label{L:WANDER U_n are bounded}
	For $n \geq N_0$ each Fatou component $U_n$ is contained in the translated circle $T^n (C_n)$.
\end{lemma}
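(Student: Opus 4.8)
The plan is to argue by contradiction. Write $r=\frac{2}{n\pi}$ and $\tilde U_n=T^{-n}(U_n)$, so that the assertion ``$U_n$ lies inside $T^n(C_n)$'' amounts to $\tilde U_n\subset\overline{D(0,r)}$; assume instead that $\tilde U_n$ contains a point $z^{\ast}$ with $\abs{z^{\ast}}>r$. Since $n\ge N_0$ we have $\D_n\subset U_n$, hence $D_n\subset\tilde U_n$, and $D_n\subset D\!\left(0,\tfrac{1}{3n\pi}\right)\subset D(0,r)$. First I would fix a compact connected set $L\subset\tilde U_n$ containing $z^{\ast}$ together with a closed sub-disc of $D_n$ — for instance a small closed disc centred in $D_n$ joined to $z^{\ast}$ by an arc in $\tilde U_n$.

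Next I would run a connectedness bootstrap on the images $\psi_{n,k}(L)$, using two facts proved earlier: (i) by Corollary \ref{C:DISC psi and fn normal on appropriat D_m}, $\psi_{n,k}(D_n)\subset D_{n+k}\subset D(0,r)$, so $\psi_{n,k}(L)$ always meets $D(0,r)$; and (ii) by Lemma \ref{L:DISC show that the h_ns expand a circle} with $m=n$, $\abs{h_{n+j}(C_n)}>r$ for every $j\ge 0$. The induction reads: if the connected set $\psi_{n,k}(L)$ meets both $D(0,r)$ and $\Complex\setminus\overline{D(0,r)}$, then it contains some $p_k\in C_n$, and $\psi_{n,k+1}(L)=h_{n+k}(\psi_{n,k}(L))$ contains $h_{n+k}(p_k)$, which by (ii) lies in $\Complex\setminus\overline{D(0,r)}$; the base case $k=0$ is the choice of $L$. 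Hence $\psi_{n,k}(L)$ contains a point of modulus exactly $r$ for every $k\ge 0$.

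To finish I would show $\psi_{n,k}\to 0$ uniformly on $L$, which contradicts the previous step as soon as $k$ is large. The family $\{\psi_{n,k}:k\ge0\}$ is normal on $\tilde U_n$: since $\psi_{n,k}=T^{-(n+k)}\circ f^{k}\circ T^{n}$ we have $\psi_{n,k}(\tilde U_n)\subset T^{-(n+k)}(U_{n+k})$, each Fatou component $U_{n+k}$ is disjoint from $2\pi\Integer\subset J(f)$ (Lemma \ref{L:JULIASET: 2npi in the J(f)}), and the translations $T^{-(n+k)}$ preserve $2\pi\Integer$; so every $\psi_{n,k}$ omits the two distinct values $2\pi$ and $4\pi$ on $\tilde U_n$ and Montel's theorem applies. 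On the other hand $\psi_{n,k}(D_n)\subset D_{n+k}$ and $D_{n+k}\to\{0\}$, so $\psi_{n,k}\to0$ uniformly on $D_n$; by the identity theorem any locally uniform limit of a subsequence is holomorphic and vanishes on $D_n$, hence vanishes on the connected set $\tilde U_n$, so $\psi_{n,k}\to0$ locally uniformly on $\tilde U_n$ and in particular uniformly on the compact set $L$. This is the desired contradiction, so $\tilde U_n\subset\overline{D(0,r)}$ — in fact $\tilde U_n\subset D(0,r)$ since $U_n$ is open — which is exactly the claim.

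I expect the normality argument in the last paragraph to be the point requiring real care: Corollary \ref{C:DISC psi and fn normal on appropriat D_m} only gives normality of $\mathscr P_n$ on $D_n$, and one must promote it to all of $\tilde U_n$ using the omitted lattice $2\pi\Integer$ coming from the Julia set, then identify the limit as the constant $0$ there. The remaining steps are a routine connectedness bootstrap that repackages Lemma \ref{L:DISC show that the h_ns expand a circle} together with the inclusions already established.
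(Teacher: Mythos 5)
Your proposal is correct and follows essentially the same strategy as the paper: it proves normality of $\mathscr P_n$ on $T^{-n}(U_n)$ by showing the maps omit $2\pi\Integer$, deduces locally uniform convergence of $\psi_{n,k}$ to $0$ (you via normality plus the identity theorem, the paper via Vitali), and then derives a contradiction from Lemma~\ref{L:DISC show that the h_ns expand a circle}. The only cosmetic difference is that you track the images of a compact connected set $L$ straddling $C_n$ by a connectedness argument, whereas the paper iteratively truncates a path $\gamma_0$ at each stage — both formalisations of the same idea that the expanding map $h_{n+k}$ keeps the connected image straddling the fixed circle $C_n$.
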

\begin{proof}
	Fix $m \geq N_0$.  Recall Definition \ref{D:DISC define C_n, T, h_n and psi_m,n, w_n and H_a} and Corollary \ref{C:DISC psi and fn normal on appropriat D_m}, which showed that $\mathscr P_m = \{\psi_{m,n}: n \geq 0\}$ is normal on $D_m$.  
	
	It follows $\mathscr P_m$ is also normal on $T^{-m}(U_m)$.  To prove this, we show that the image of $T^{-m}(U_m)$ under members of $\mathscr P_m$ excludes multiples of $2\pi$.  For, if not, then for some $m \geq N_0$, there exits $n \geq 0$, an integer $k$ and $z \in T^{-m}(U_m)$ such that $\psi_{m,n}(z) = 2k\pi$.  This means $ T^{-(m+n)}\circ f^n\circ  T^m(z) = 2k\pi$.  Put $\zeta = T^m(z) \in U_m$, so that $f^n(\zeta) = 2(k+m+n)\pi $.  But this means that $\zeta \in U_m \subset F(f)$ maps into the Julia set of $f$, which is impossible.  Thus all multiples $2k\pi$ must be excluded from the images of $T^{-m} (U_m)$ under each member of $\mathscr P_m$ and by Montel's Theorem $\mathscr P_m$ is normal on $T^{-m}(U_m)$.
	
	Now as $n\to\infty$ the discs $D_{m+n}$ shrink to zero, and since $\psi_{m,n}(D_m) \subset D_{m+n}$, $\psi_{m,n}(D_m)\to 0$ on $D_m$ as $n \to \infty$.  Then  by Vitali's Theorem \cite[Theorem 3.3.3]{beardon}, $\psi_{m,n}(z)\to 0$ locally uniformly on the whole of $T^{-m}(U_m)$.
	
	Aiming for a contradiction, assume $T^{-m}(U_m)$ contains a point $z$ outside $C_m$.  Because $T^{-m}(U_m)$ contains $D_m$ and is connected we can construct a path $\gamma_0$, lying entirely inside $T^{-m}(U_m)$, joining $z$ to a point $z_0 \in D_m$.  Since $D_m$ lies inside $C_m$, $\gamma_0$ must cross $C_m$.  Truncate $\gamma_0$ to obtain $\gamma_0'$ joining $z_0$ to a point on $C_m$.  
	
	Apply $h_{m}$ to $\gamma_0'$ to obtain a new path, $\gamma_1$.  One end is $z_1 = h(z_0) \in D_{m+1}$.  The other lies outside $C_m$ by Lemma \ref{L:DISC show that the h_ns expand a circle}.  As before $\gamma_1$ must cross $C_m$ and so truncate it to obtain $\gamma_1'$ joining $z_1 \in D_{m+1}$ to a point on $C_m$.  Repeat, applying $h_{m+1}, h_{m+2}, \ldots, $ in turn, truncating each path to give a sequence of paths $( \gamma_n')$ and points $(z_n)$ where each $ z_n \in D_{m+n}$ ($n \geq 1$) and $\gamma_n'$ joins $z_n$ to a point on $C_m$.  By construction each $\gamma_n'$ is part of the image of the compact set $\gamma_0$ under the composition $\psi_{m,n} = h_{m+n-1}\circ \cdots \circ h_m = \psi_{m,n} $.  But as $n \to \infty $, $z_n \to 0 $ whereas each image has a point on the fixed circle $C_m$, which contradicts the uniform convergence $\psi_{m,n}(\gamma_0)\to 0$ as $n \to \infty$.  
	
	Accordingly the assumption that $T^{-m}(U_m)$ has a point $z$ outside $C_m$ is false and $T^{-m}(U_m)$ must be contained inside $C_m$.  It follows $U_m $ lies inside $ T^m (C_m) $.  This argument can be applied for every $m \geq N_0$ to complete the proof.
\end{proof}
We have now established $f$ has simply connected wandering domains ${\{U_n: n \geq 0\}}$ such that $f(U_n) \subset U_{n+1}$.  Moreover, for $n \geq N_0$, each $U_n$ contains the open disc $\D_n$ and lies inside the translated circle $T^n(C_n)$.
\section{The cauliflower and Hausdorff metric}\label{S:CAULI}
We will shortly consider how the shape of $U_n$ behaves as $n \to \infty$.  First, however, with that objective in mind, we introduce the cauliflower.  It is derived from the Fatou set of $q(z) = z-\pi z^2$. This quadratic is the only one (up to conjugation) with a parabolic basin, having a fixed point at $z=0$ with multiplier of $1$.  The properties of its Fatou set are known and we state a selection in the next lemma (see for instance \cite[Example 6.5.3]{beardon}, \cite[Para 173]{steinmetz} or \cite[pp. 97,115, 130]{carleson}).  
\begin{lemma}\label{L:CAULI properties of the quadratic}
	The quadratic $q(z)= z - \pi z^2$ has the following properties:
	\begin{enumerate}[(1)]
		\item The Fatou set of $q$ has exactly two simply connected components $W_0$, bounded, and $W_\infty$, unbounded. The Julia set $J(q)$ is their common boundary and contains $0$.  
		\item $\overline{W_0}$ lies inside the circle $\big\{\abs{z} = \tfrac{2}{\pi}\big\}$ (this is $C_1$ of Definition \ref{D:DISC define C_n, T, h_n and psi_m,n, w_n and H_a}).  
		\item\label{I:L:CAULI existence of rho} $W_0$ contains an open disc centred on the real axis and near $0$, for example $\disc{\tfrac{1}{6\pi}, \tfrac{1}{6\pi}}$.
		\item Both $q^n$ and $\arg q^n \to 0$ as $n \to \infty$ locally uniformly on $W_0$.
		\item\label{I:L:CAULI properties of q convergence of W_infty}As $n\to\infty$, $q^n \to \infty$ locally uniformly on $W_\infty$.
	\end{enumerate}
\end{lemma}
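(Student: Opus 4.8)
The plan is to split the lemma into two kinds of claim. Parts (1), (4) and (5) are classical features of the parabolic quadratic and can essentially be quoted from the references cited in the statement; I would only add the affine conjugacy that pins down which classical facts are being invoked. Parts (2) and (3), by contrast, carry the specific numerical content that is used later (the circle $C_1$ and the disc $D_1$), so I would prove them directly by short estimates.

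For the conjugacy, set $\phi(z) = \tfrac12 - \pi z$; a one-line comparison of coefficients gives $\phi\circ q = P\circ\phi$ with $P(w) = w^2 + \tfrac14$, and $\phi$ sends $0$ to the parabolic fixed point $\tfrac12$ of $P$ (the double root of $w = w^2 + \tfrac14$), of multiplier $1$. Part (1) is then transport of structure from the standard description of the cauliflower $K(P)$: $K(P)$ is connected and full, its interior is a single simply connected component — the immediate parabolic basin of $\tfrac12$, attracting all of that interior — and $\Complex\setminus K(P)$ is the unbounded Fatou component, the basin of $\infty$. Hence $W_0$ corresponds under $\phi$ to the interior of $K(P)$ (bounded, simply connected), $W_\infty$ to $\Complex\setminus K(P)$ (unbounded, simply connected), $J(q)=\phi^{-1}(J(P))$ is their common boundary, and $0\in J(q)$ since $\tfrac12\in J(P)$. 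Part (5) is the usual convergence in the basin of infinity of a polynomial. Part (4) is quoted likewise; the mechanism behind it is the Leau--Fatou flower theorem — every orbit in $W_0$ eventually enters an attracting petal at $0$ tangent to the positive real axis (the attracting direction of $z\mapsto z-\pi z^2+\cdots$, as $\pi>0$) and within it converges to $0$ tangentially, which gives $q^n\to 0$ and $\arg q^n\to 0$ locally uniformly on $W_0$.

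For part (3) I would pass to the coordinate $w = 1/z$, in which $q$ becomes $g(w) := 1/q(1/w) = w^2/(w-\pi) = w + \pi + \pi^2/w + \cdots$ and $D_1 = \disc{\tfrac{1}{6\pi},\tfrac{1}{6\pi}}$ corresponds to the half-plane $H_{3\pi}$ (recall $H_a = 1/\disc{\tfrac{1}{2a},\tfrac{1}{2a}}$). A short computation gives, for every $w$ with $\Re w > \pi$,
\[
	\Re g(w) - \Re w \;=\; \frac{\pi\big(\Re w\,(\Re w - \pi) + (\Im w)^2\big)}{\abs{w-\pi}^2} \;>\; 0 ,
\]
so $g(H_{3\pi})\subseteq H_{3\pi}$ and hence $q(D_1)\subseteq D_1$. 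Then the iterates $q^n$ restricted to $D_1$ all take their values in $D_1$, so by Montel's theorem $D_1\subseteq F(q)$; as $D_1$ contains small positive reals, whose orbits decrease monotonically to $0$ and hence lie in the bounded component $W_0$, connectedness of $D_1$ forces $D_1\subseteq W_0$. For part (2) I would argue directly in the $z$-plane: if $\abs{z}\ge 2/\pi$ then
\[
	\abs{q(z)} = \abs{z}\,\abs{1-\pi z} \;\ge\; \abs{z}\,\big(\pi\abs{z}-1\big) \;\ge\; \abs{z} ,
\]
and examining the equality case of $\abs{1-\pi z}\ge\pi\abs{z}-1$ shows that on the circle $\abs{z}=2/\pi$ one in fact has $\abs{q(z)}>2/\pi$ except at the single point $z=2/\pi$, from which $q(2/\pi)=-2/\pi$ and $q(-2/\pi)=-6/\pi$, again of modulus $>2/\pi$. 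Once an orbit reaches modulus $r>2/\pi$ the successive moduli satisfy $r_{k+1}\ge r_k(\pi r_k-1)\ge r_k(\pi r-1)$ with $\pi r - 1 > 1$, so they grow geometrically to $\infty$. Hence $\{\abs{z}\ge 2/\pi\}\subseteq W_\infty$, which is disjoint from $\overline{W_0}\subseteq W_0\cup J(q)$; therefore $\overline{W_0}\subseteq\{\abs{z}<2/\pi\}$, the interior of $C_1$.

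I expect the obstacle to be presentational rather than mathematical, since there is no hard estimate here. The points needing care are: quoting the cauliflower's structure precisely enough that (1), (4), (5) are genuinely covered — in particular that the interior of $K(P)$ is a \emph{single} component, which holds because the unique critical point of $P$ lies in $W_0$; the equality analysis in (2), since the circle $\abs{z}=2/\pi$ must be shown to \emph{miss} $\overline{W_0}$ and not merely to bound it; and the assertion ``$\arg q^n\to 0$'', which near the singularity of $\arg$ at $0$ is cleanest to read as $q^n(z)/\abs{q^n(z)}\to 1$, legitimate because the orbit eventually lies in the petal.
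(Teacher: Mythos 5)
Your proposal is correct, but it does something the paper does not: the paper offers no proof of this lemma at all, merely citing Beardon, Steinmetz and Carleson--Gamelin in the preceding sentence, whereas you supply actual arguments for the two quantitative items. Your handling of (1), (4), (5) matches the paper's intent --- quote the classical Leau--Fatou picture, with the affine conjugacy $\phi(z)=\tfrac12-\pi z$ to $w\mapsto w^2+\tfrac14$ making the identification with the standard cauliflower explicit. Where you genuinely go beyond the paper is in (2) and (3). Part (2) follows cleanly from $\abs{q(z)}\geq\abs{z}(\pi\abs{z}-1)$ and the equality analysis at $z=2/\pi$; the one point to keep an eye on is that $\abs{q(z)}\geq\abs z$ on the circle is not enough by itself --- you need, as you do, the fact that every point on $\{\abs z=2/\pi\}$ strictly increases in modulus within two steps, so the whole closed exterior lies in $W_\infty$, and then $\overline{W_0}\subset W_0\cup J(q)$ is disjoint from it. Part (3) via the coordinate $w=1/z$, the identity $\Re g(w)-\Re w=\pi\bigl(\Re w(\Re w-\pi)+(\Im w)^2\bigr)/\abs{w-\pi}^2>0$ for $\Re w>\pi$, and Montel, is correct and is in fact a direct parallel to the paper's own treatment of $w_n$ on $H_{3n\pi}$ in Section~3 --- it is the same Cayley-type device applied to the limiting quadratic $q$ rather than to the perturbed maps $h_n$. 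Net effect: your version is more self-contained than the paper's bare citation, at the cost of a short extra computation; the tradeoff seems worthwhile since the circle $C_1$ and the disc $D_1$ are bespoke to this paper and not something a reader would find verbatim in the cited references.
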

\begin{figure}[H]
	\centering
	\includegraphics[width=0.5\textwidth]{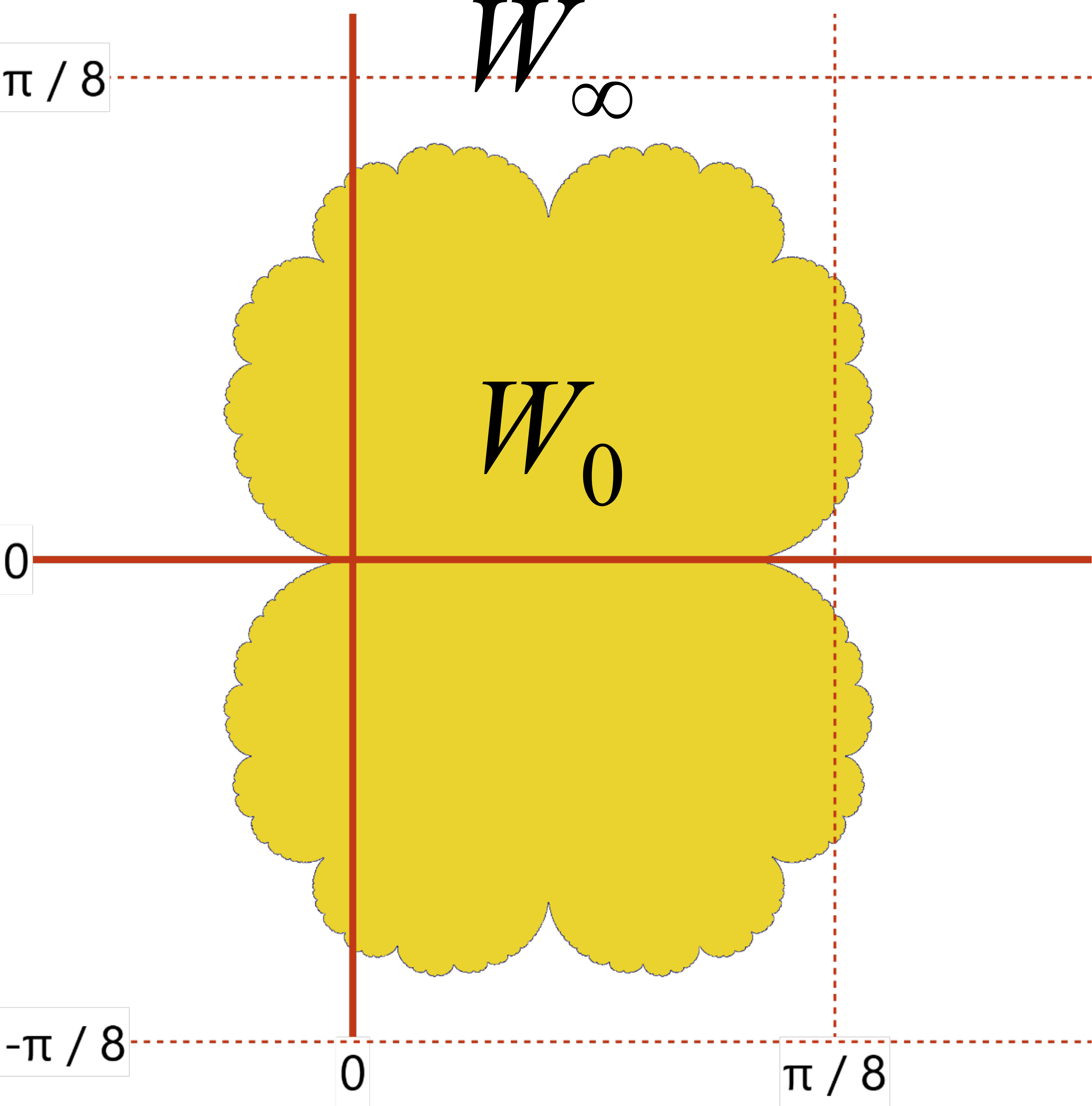}
	\caption{The Cauliflower}\label{F:CAULI image of cauliflower}
\end{figure}
The closure of the Fatou component $W_0$ is known as the cauliflower due to its distinctive appearance.  The author does not know the exact origin of the name but understands that the set was referred to as the \emph{chou-fleur} by Adrien Douady in the early 1980s; the name cauliflower appears in many places including \cite[p. 120]{MR2193309} and John Milnor’s 1990 notes for this text.  Since $W_0$ is simply connected and bounded by a Jordan curve (also in \cite{MR2193309}), its closure is the same as its filled in boundary.  

We will show that the Hausdorff distance between $\cauli$ and the sets $\overline{U_n}$, with the latter suitably scaled and re-positioned, converges to zero. 

A review of key properties of the Hausdorff metric that we need follows (see, for example, \cite[Page 134 f.]{falconer2014}).

\begin{definition} \label{D:CAULI Hausdorff metric}
	For a metric space $H$ with distance $\d{\cdot,\cdot}$ and subsets $X,Y \subset H$, the Hausdorff distance between $X$ and $Y$ is
		\begin{align*}
		    \dh{X, Y} = \max\left\{ \sup_{x\in X}\big \{ \inf_{y\in Y}\{\d{x,y}\} \big\},
		    	\sup_{y\in Y}\big \{ \inf_{x\in X}\{\d{x,y}\} \big\}
		    \right\}.
		\end{align*}
\end{definition} 
\begin{remark}	
The Hausdorff distance is a measure of closeness for any two subsets $X$ and $Y$.  If we restrict consideration to the set of non-empty compact subsets of $H$, then these form a metric space under $\dh{}$.  

When $X \subset Y$, $\inf_{y\in Y} \d{x,y} = 0$ for all $x \in X$, only the second term in Definition \ref{D:CAULI Hausdorff metric} applies, and $\dh{X,Y} = \sup_{y\in Y} \big\{\inf_{x\in X} \{\d{x,y}\} \big\}$.  It then follows that when $X_1 \subset X_2 \subset X_3 \subset H$ 
	\begin{align}\label{E:CAULI Hausdorff inclusion inequality}
		    \dh{X_1, X_2} \leq \dh{X_1, X_3} \text{  and  } \dh{X_2,X_3} \leq \dh{X_1, X_3}.
	\end{align}
\end{remark}
Additionally, for $\Omega_1, \Omega_2 \subset H$
	\begin{align}\label{E:CAULI Hausdorff distance same under closure}
	    \dh{\Omega_1, \Omega_2}
	    = \dh{\overline {\Omega_1}, \Omega_2 }
	    = \dh{\Omega_1, \overline{\Omega_2} }
	    = \dh{\overline {\Omega_1}, \overline{\Omega_2}}.
	\end{align}
We use the Hausdorff distance to create internal and external approximations to $\cauli$.
\begin{definition}
	For positive $\delta < \tfrac{1}{6\pi}$, thus small enough to ensure $\disc{\tfrac{1}{6\pi},\delta} \in W_0$, define:
	\begin{enumerate}[(1)] 
	\item $I_\delta = \bigcup \{\Omega \in \mathscr I_\delta\}$ where $\mathscr I_\delta$ is the collection of open and connected sets $\Omega$ such that $\tfrac{1}{6\pi} \in \Omega$ and for all $z \in \Omega$ the disc $\disc {z,\delta} \subset W_0$.  
	\item $E_\delta = \bigcup \{ \disc {z, \delta} : z \in W_0 \}$.
	\end{enumerate}
\end{definition}
\begin{remark}
	The collection $\mathscr I_\delta$ is constructed only for $\delta < \tfrac{1}{6\pi}$ to ensure it is non-empty and each member contains $\tfrac{1}{6\pi}$.  It follows $I_\delta$ is non-empty and connected.  The sets $E_\delta$ are non-empty and connected for any positive $\delta$ and no restriction is needed, but using the same requirement causes no difficulty and ensures the sets $E_\delta$ are uniformly bounded. 
\end{remark}
\begin{lemma}\label{L:CAULI interior and exterior set properties}
Let $\delta \in \big(0,\tfrac{1}{6\pi}\big)$. Then:
\begin{enumerate}[(1)]
\item Both $I_\delta$ and $E_\delta$ are non-empty open connected sets with $I_\delta \subset W_0$ and $\cauli \subset E_\delta$.\label{I:L:CAULI interior and exterior set properties I E non empty connected}
\item Both $I_\delta$ and $E_\delta$ are bounded and their closures are compact with $\overline{I_\delta} \subset W_0$.\label{I:L:CAULI interior and exterior set properties compactness I and E}
\item If $0 < \delta' < \delta$, $I_{\delta} \subset I_{\delta'}$ and $E_{\delta'} \subset E_\delta$.\label{I:L:CAULI interior and exterior set properties inclusion as delta shrinks}
\item $\displaystyle \bigcup_{\delta} I_\delta = W_0$ and $\displaystyle \bigcap_{\delta} E_\delta = \cauli$.\label{I:L:CAULI interior and exterior set properties union properties}
\item As $\delta \to 0$, $\dh{\overline{E_\delta}, \cauli} \to 0$, $\dh{\overline{I_\delta}, \cauli} \to 0$ and $\dh{\overline{E_\delta},\overline{I_{\delta}}} \to 0 $.\label{I:L:CAULI interior and exterior set H convergence}
\end{enumerate}
\end{lemma}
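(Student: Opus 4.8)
The plan is to go through the five parts in order, disposing of the topological bookkeeping quickly and isolating the two places where compactness is genuinely needed. For part (1), $\mathscr I_\delta$ is non-empty because the disc $\disc{\tfrac{1}{6\pi},\tfrac{1}{6\pi}-\delta}$ (recall $\delta<\tfrac{1}{6\pi}$) lies in $\mathscr I_\delta$: it is open, connected, contains $\tfrac{1}{6\pi}$, and every $z$ in it satisfies $\disc{z,\delta}\subset\disc{\tfrac{1}{6\pi},\tfrac{1}{6\pi}}\subset W_0$ by Lemma~\ref{L:CAULI properties of the quadratic}. Hence $I_\delta\neq\emptyset$; it is open as a union of open sets, connected as a union of connected sets all containing $\tfrac{1}{6\pi}$, and contained in $W_0$ since $z\in\disc{z,\delta}\subset W_0$ for each point $z$ of each $\Omega\in\mathscr I_\delta$. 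The set $E_\delta$ equals the open $\delta$-neighbourhood $\{w:\operatorname{dist}(w,W_0)<\delta\}$ of the connected set $W_0$, so it is non-empty, open and connected, and $\cauli=\overline{W_0}\subset E_\delta$ because $\operatorname{dist}(w,W_0)=0<\delta$ for $w\in\overline{W_0}$. Part (3) is immediate: if $\delta'<\delta$ then $\disc{z,\delta'}\subset\disc{z,\delta}$, whence $\mathscr I_\delta\subset\mathscr I_{\delta'}$, giving $I_\delta\subset I_{\delta'}$, and $E_{\delta'}\subset E_\delta$.

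For part (2), $I_\delta\subset W_0\subset\cauli$ is bounded (Lemma~\ref{L:CAULI properties of the quadratic}) and $E_\delta$ lies within $\delta<\tfrac{1}{6\pi}$ of the bounded set $\cauli$ --- this is where the restriction on $\delta$ is used --- so both closures are compact. The one new point is $\overline{I_\delta}\subset W_0$. I would prove it by setting $G_\delta=\{z\in\Complex:\disc{z,\delta}\subset W_0\}$ and checking $G_\delta$ is closed: if $z_k\to z$ with each $\disc{z_k,\delta}\subset W_0$ and $w\in\disc{z,\delta}$, then $|w-z_k|<\delta$ for large $k$, so $w\in\disc{z_k,\delta}\subset W_0$, hence $\disc{z,\delta}\subset W_0$. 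Since $I_\delta\subset G_\delta\subset W_0$ and $G_\delta$ is closed, $\overline{I_\delta}\subset G_\delta\subset W_0$.

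For part (4), $\bigcup_\delta I_\delta\subset W_0$ follows from part (1). For the reverse inclusion, given $w\in W_0$, use that the open connected set $W_0$ is path-connected to join $\tfrac{1}{6\pi}$ to $w$ by a compact arc $\gamma\subset W_0$, and set $2\delta_0=\operatorname{dist}(\gamma,\Complex\setminus W_0)>0$ (the complement is non-empty and closed since $W_0$ is bounded and open). Then the open $\delta_0$-neighbourhood $\Omega$ of $\gamma$ is connected, contains $\tfrac{1}{6\pi}$ and $w$, and every $z\in\Omega$ has $\disc{z,\delta_0}\subset W_0$ (points of $\disc{z,\delta_0}$ lie within $2\delta_0$ of $\gamma$); so taking $\delta=\min\{\delta_0,\tfrac{1}{12\pi}\}$ puts $\Omega\in\mathscr I_\delta$ and $w\in\Omega\subset I_\delta$. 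Likewise $\cauli\subset\bigcap_\delta E_\delta$ by part (1), while any $w\in\bigcap_\delta E_\delta$ has $\operatorname{dist}(w,W_0)<\delta$ for every $\delta\in(0,\tfrac{1}{6\pi})$, hence $\operatorname{dist}(w,W_0)=0$ and $w\in\overline{W_0}=\cauli$.

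For part (5), I would use that the Hausdorff distance is unchanged by passing to closures, \eqref{E:CAULI Hausdorff distance same under closure}, and that $\cauli$ is compact. From $W_0\subset\cauli\subset E_\delta$, Definition~\ref{D:CAULI Hausdorff metric} gives $\dh{\overline{E_\delta},\cauli}=\sup_{w\in E_\delta}\operatorname{dist}(w,\cauli)\leq\delta\to0$. From $\overline{I_\delta}\subset W_0\subset\cauli$ (part (2)), $\dh{\overline{I_\delta},\cauli}=\sup_{w\in\cauli}\operatorname{dist}(w,\overline{I_\delta})$; here I would fix $\varepsilon>0$ and observe that the open sets $A_\delta=\{w:\operatorname{dist}(w,I_\delta)<\varepsilon\}$ increase as $\delta\downarrow0$ (part (3)) with $\bigcup_\delta A_\delta=\{w:\operatorname{dist}(w,W_0)<\varepsilon\}\supset\cauli$ (part (4) and $\cauli=\overline{W_0}$), so compactness of $\cauli$ forces $\cauli\subset A_\delta$ for all small $\delta$, i.e.\ $\dh{\overline{I_\delta},\cauli}\leq\varepsilon$ eventually. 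Finally $\dh{\overline{E_\delta},\overline{I_\delta}}\leq\dh{\overline{E_\delta},\cauli}+\dh{\cauli,\overline{I_\delta}}\to0$ by the triangle inequality on the metric space of non-empty compact subsets. The only steps beyond routine manipulation are the two compactness arguments --- the closedness of $G_\delta$, which pins $\overline{I_\delta}$ strictly inside $W_0$, and the nested-open-cover step that makes $I_\delta$ eventually $\varepsilon$-fill $\cauli$ --- together with the arc construction in part (4); the underlying asymmetry is that $E_\delta$ is a single neighbourhood, easily controlled from outside, whereas $I_\delta$ is assembled from a family and must be controlled from within.
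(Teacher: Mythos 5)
Your proof is correct, and the overall architecture (path argument for part (4), compactness for part (5)) matches the paper's, but the two places where you differ are worth noting. In part (1) you witness non-emptiness of $\mathscr I_\delta$ with $\disc{\tfrac{1}{6\pi},\tfrac{1}{6\pi}-\delta}$ rather than the paper's $\disc{\tfrac{1}{6\pi},\delta}$; your choice is actually the safer one, since the paper's disc only manifestly lies in $\mathscr I_\delta$ when $2\delta\leq\tfrac{1}{6\pi}$, whereas yours works for all $\delta\in(0,\tfrac{1}{6\pi})$ with a one-line triangle inequality. In part (5), for the $I_\delta$ convergence the paper proceeds by sequential compactness: it constructs near-extremal points $\zeta_n\in\cauli$ realising $\dh{\cauli,I_{1/n}}$ up to $1/n$, extracts a convergent subsequence, and drives the monotone limit $\mu$ below $3\varepsilon$; you instead use the finite-subcover form of compactness directly, observing that the $\varepsilon$-neighbourhoods $A_\delta$ of $I_\delta$ form an increasing open cover of $\cauli$ (by parts (3) and (4)) and must therefore eventually contain it. Both are legitimate; yours is shorter and avoids the bookkeeping of three indices $K_1,K_2,K_3$. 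Your treatment of the $E_\delta$ side, invoking \eqref{E:CAULI Hausdorff distance same under closure} to pass to closures at the start, is also a touch cleaner than the paper's approximating-sequence argument and gives the sharp bound $\delta$ rather than $\delta+\varepsilon$. Finally, your explicit proof that $\overline{I_\delta}\subset W_0$ via the closed set $G_\delta$ fills in a detail the paper dismisses as elementary.
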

\begin{proof} 
\ref{I:L:CAULI interior and exterior set properties I E non empty connected}, \ref{I:L:CAULI interior and exterior set properties compactness I and E} and  \ref{I:L:CAULI interior and exterior set properties inclusion as delta shrinks} are elementary using the facts that $\delta < \tfrac{1}{6\pi}$ so that $\disc{\tfrac{1}{6\pi}, \delta} \in \mathscr I_\delta $ and $W_0$ is connected.

\ref{I:L:CAULI interior and exterior set properties union properties}.  For the first part, every point $z$ in $W_0$ may be joined to $\frac{1}{6\pi}$ by a path $\gamma$ in $W_0$.  Such a path is compact and so has a positive distance, $\varepsilon$ say, to $\partial W_0$. Create an open cover for $\gamma$ in $W_0$ consisting of discs of radius $\delta = \min\{\tfrac{1}{6\pi}, \frac{1}{2}\varepsilon\}$.  Their union meets the criteria for $\mathscr I_\delta$ and therefore $z \in \bigcup_\delta I_\delta$.  We already know the reverse inclusion.  For the second part, we know $\cauli \subset \bigcup_\delta E_\delta$ for all $\delta$.  In the opposite direction, $\bigcap_\delta E_\delta$ clearly contains all limit  points of $W_0$ and is therefore is a subset of $\cauli$.

\ref{I:L:CAULI interior and exterior set H convergence}. Fix $\delta$ and consider $E_\delta$ first. It contains $\cauli$ and the Hausdorff distance reduces to
	\begin{align*}
	    \dh{\cauli, \overline{E_\delta}} = \sup_{\zeta \in \overline{E_\delta}} \big\{ \inf_{z \in \cauli}\{ \d{\zeta,z} \} \big\}.
	\end{align*}
Take any $\zeta \in \overline{E_\delta}$.  Then there exist a sequence $\zeta_k \in E_\delta$ with $\zeta_k \to \zeta $ as $k \to \infty$.  For $\varepsilon > 0$, choose~$ k $ so that $ \d {\zeta_k,\zeta} < \varepsilon$.  There exists $z_k\in W_0$ such that $\zeta_k \in \disc {z_k,\delta}$ and the triangle inequality implies that $\d{\zeta,z_k} < \delta + \varepsilon$.  It now follows that for every $\zeta \in \overline {E_\delta}$
	\begin{align*}
	    \inf_{z \in \cauli} \{ \d{\zeta, z} \} 
	    	&= \inf_{z \in W_0} \{ \d{\zeta,z} \} \\
			&\leq \d{\zeta, z_k} \\
			&\leq \delta + \varepsilon.
	\end{align*}
It follows that $\dh{\cauli,\overline{E_\delta}} \leq \delta + \varepsilon$.  We chose $\varepsilon$ arbitrarily and the result follows.

Now consider $I_\delta$.  Chose $\varepsilon >0$.  Since $\overline{I_{\delta}} \subset \cauli$, as above the Hausdorff distance simplifies to 
	\begin{align*}
	    \dh {\overline{I_{\delta}}, \cauli} = \sup_{\zeta \in \cauli} \big\{ 
	    	\inf_{z \in \overline{I_{\delta}}} \{ \d {\zeta, z} \}\big\}
	    	= \sup_{\zeta \in \cauli} \big\{ \inf_{z \in I_{\delta}} \{ \d {\zeta, z} \}
	    \big\}.
	\end{align*}
Accordingly, for $ n \geq 1$ we can find $\zeta_n \in \cauli$ satisfying
 	\begin{align}\label{E:CAULI lower bounds on infimum of d(zeta_n, z)}
 	    \inf\{\d{\zeta_n, z} : z \in I_{1/n} \} + \frac{1}{n} > \dh {\cauli, I_{1/n}}.
 	\end{align}
The remarks after Definition \ref{D:CAULI Hausdorff metric} imply $\dh {\cauli, \overline{I_\delta}}$ decreases with $\delta$, and so has a limit $\mu \geq 0$ as $\delta \to 0$.  

The $(\zeta_n)$ form a sequence in the compact set $\cauli$ which therefore has a convergent subsequence $(\zeta_{n_k})$ with limit $\zeta \in \cauli$.  Since $\zeta \in \cauli$ there exists $\zeta' \in W_0$ such that $\d{\zeta,\zeta'} < \varepsilon$.  By part \ref{I:L:CAULI interior and exterior set properties union properties}, we can find $K_1$ such that $\zeta' \in I_{1/{n_k}}$ for all $k \geq K_1$; we can find $K_2$ such that $\d{\zeta,\zeta_{n_k}} < \varepsilon$ whenever $k \geq K_2$; and we can find $K_3$ such that $1/{n_k} < \varepsilon$  whenever $k \geq K_3$.  Then, for $k \geq \max\{ K_1,K_2,K_3\}$,
	\begin{align*}
		\mu &\leq \dh {\cauli,I_{1/n_k} } \\
	    &< \inf\{\d{\zeta_{n_k}, z}: z \in I_{1/n_k}\}  + \varepsilon \\
	    &< \d {\zeta_{n_k},\zeta}+\d{\zeta,\zeta'} + \varepsilon \\
	    &< 3\varepsilon.
	\end{align*}
Since $\varepsilon>0$ is chosen freely, $\mu=0$.   The remainder of the proof follows from the triangle inequality.  
\end{proof}
\section{The convergence of the wandering domains' shape}\label{S:CONV}
To establish that the shape of $U_n$ approximates the cauliflower, it needs to be scaled and re-positioned.  We therefore introduce versions of the functions $h_n$ and $\psi_{m,n}$ of Definition \ref{D:DISC define C_n, T, h_n and psi_m,n, w_n and H_a} as follows.
\begin{definition}\label{D:CONV definitions of phi_m,n and g_m}
Define the following:
\begin{enumerate}[(a)~]
	\item for integers $n \geq 1$,
		\begin{align*}
			g_n(z) = (n+1)h_n(z/n) = (n+1)\left( \left(\tfrac{z}{n}+ 2n\pi\right)\cos\left(\tfrac{z}{n}\right) - 2n\pi \right);
		\end{align*}
	\item
	for $m \geq 1,n \geq 0$, 
		\begin{align*}
		    \phi_{m,n}(z) = (m+n) \psi_{m,n}(z/m);
		\end{align*}
	\item
	for $m \geq 1 $ 
		\begin{align*}
   			V_m = m T^{-m}(U_m).
		\end{align*}
	\end{enumerate}
\end{definition}
\begin{remark}
	It follows from the definitions that for $ m \geq 1, n \geq 0$
		\begin{align*}
		    \phi_{m,n}(z) 
		    = g_{m+n-1} \circ g_{m+n-2} \circ\cdots\circ g_m(z) 
		    = (m+n) T^{-(m+n)} \circ f^n \circ T^m(z/m).
		\end{align*}
	We can express $g_n$ as a power series as we did for $h_n$ in equation \eqref{E:DISC: power series for h_n}.
		\begin{align}\label{E:CONV power series for g_n}
			g_n(z)
				&= \frac{n+1}{n}\sum_{k=0}^{\infty} \frac{(-1)^k}{n^{2k}} 
					\left(\frac{z^{2k+1}}{(2k)!} - \frac{2\pi z^{2k+2}}{(2k+2)!}\right)
				\\&= \frac{n+1}{n}\left(z - \pi z^2\right) 
					- \frac{n+1}{2! n^3} z^3 
				\nonumber
				\\&\hspace{1cm}
					+ \frac{2(n+1)\pi}{4! n^3} z^4 
					+ \frac{n+1}{4! n^5}z^5 
				\nonumber
				\\&\hspace{2cm}
					- \frac{2(n+1)\pi}{6! n^5} z^6 
					- \frac{n+1}{6! n^6} z^6
					+\cdots .
				\nonumber
		\end{align}
	The sets $V_m$ are simply the $U_m$ translated to the origin and scaled by a factor $m$.  It is these that converge to the cauliflower.  Lemma \ref{L:WANDER U_n are bounded} implies for $m \geq N_0$ that $V_m$ is contained in the circle $\{z: \abs{z} = \tfrac{2}{\pi}\}$.
\end{remark}
Recall that in Lemma \ref{L:CAULI properties of the quadratic} we defined $q(z)=z-\pi z^2$.  The next steps show that for $n \geq 0$ and large enough $m$ the functions $\phi_{m,n}$ provide uniform approximation for $q^n$ on a suitable domain.  The first step towards this is to show that $g_n \to q$ as $n \to \infty$.
\begin{lemma}\label{L:CONV uniform convergence of g_m to q}
	Let $r > 0$.  Then there exists $\mu(r) > 0$ dependent only on $r$ such that
		\begin{align*}
		    \abs{g_m(z) - q(z) } \leq \frac{\mu(r)}{m}
		\end{align*}
	for all $m \geq 1$ and all $z \in \overline {\disc{0,r}}$.  This implies $g_m(s) \to q(z)$ uniformly on $\overline {\disc{0,r}}$ as $m \to \infty$.
\end{lemma}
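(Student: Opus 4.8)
The plan is to compare the two power series term by term. We have the power series for $g_m$ in equation \eqref{E:CONV power series for g_n} and the explicit expression $q(z) = z - \pi z^2$. Writing
\begin{align*}
	g_m(z) - q(z) = \left(\frac{m+1}{m} - 1\right)(z - \pi z^2) + \frac{m+1}{m}\sum_{k \geq 1} \frac{(-1)^k}{m^{2k}}\left(\frac{z^{2k+1}}{(2k)!} - \frac{2\pi z^{2k+2}}{(2k+2)!}\right),
\end{align*}
we see the leading discrepancy is $\frac{1}{m}(z - \pi z^2)$, which on $\overline{\disc{0,r}}$ is bounded by $\frac{1}{m}(r + \pi r^2)$. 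The remaining sum carries a factor $\frac{1}{m^{2k}} \leq \frac{1}{m}$ for every $k \geq 1$, so I would pull out one factor of $\frac{1}{m}$ and bound what is left by a convergent series in $r$.

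The key steps, in order, are: (1) isolate the $\frac{1}{m}$-term $\frac{1}{m}(z-\pi z^2)$ and bound it by $\frac{1}{m}(r + \pi r^2)$; (2) for the tail $\sum_{k\geq 1}$, use $\frac{m+1}{m} \leq 2$ and $\frac{1}{m^{2k}} \leq \frac{1}{m}$ to obtain the bound $\frac{2}{m}\sum_{k\geq 1}\left(\frac{r^{2k+1}}{(2k)!} + \frac{2\pi r^{2k+2}}{(2k+2)!}\right)$; (3) recognise that this last series converges (it is dominated by, say, $2(r + 2\pi r^2)\cosh r$, or simply observe it is the tail of the entire functions $\sinh$ and $\cosh$ evaluated at $r$, hence finite); (4) set $\mu(r)$ to be the sum of the bounds from steps (1) and (2)–(3), which depends only on $r$, giving $\abs{g_m(z) - q(z)} \leq \mu(r)/m$ for all $m \geq 1$ and all $z \in \overline{\disc{0,r}}$; (5) let $m \to \infty$ to conclude uniform convergence on $\overline{\disc{0,r}}$.

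There is no real obstacle here — this is a routine estimate on an entire function whose coefficients decay superexponentially. The only points requiring a modicum of care are keeping the factor $\frac{m+1}{m}$ uniformly bounded (which it is, by $2$, for all $m \geq 1$) and making sure that every term of the $k \geq 1$ tail genuinely contributes at least one spare power of $\frac{1}{m}$, which holds since $m^{2k} \geq m$ for $k \geq 1$ and $m \geq 1$. One should also note the harmless typo in the statement (``$g_m(s)$'' should read ``$g_m(z)$''); the conclusion is that $g_m \to q$ uniformly on every closed disc $\overline{\disc{0,r}}$, equivalently locally uniformly on $\Complex$.
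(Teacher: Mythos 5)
Your proposal is correct and follows essentially the same route as the paper: both pull out one factor of $\tfrac{1}{m}$ from each term of $g_m-q$ (the leading $\tfrac{1}{m}(z-\pi z^2)$ plus the $k\geq 1$ tail, where $m^{2k}\geq m$), and bound what remains by a convergent power series in $r$ that is independent of $m$, yielding the constant $\mu(r)$. The only cosmetic difference is that the paper keeps everything inside a single absolute value and bounds each coefficient (e.g.\ $\tfrac{m+1}{m^2}\leq 2$) before invoking compactness, whereas you split off the leading term first and then invoke $\tfrac{m+1}{m}\leq 2$ and $m^{-2k}\leq m^{-1}$ — the substance is identical.
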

\begin{proof}
	Let $z \in \overline{ \disc{0,r}}$.  For $m \geq 1$ we use the power series expansion \eqref{E:CONV power series for g_n}
		\begin{align*}
		    \abs{g_m(z) - q(z)} &= \frac{1}{m}\abs{z -\pi z^2 - \frac{(m+1)z^3}{2!m^2} + \frac{2 \pi (m+1)z^4}{4!m^2} + \frac{(m+1)z^5}{4!m^4} - \cdots } \\
		    &\leq \frac{1}{m}\abs{\abs{z} + \pi \abs{z}^2 + \frac{2\abs{z}^3}{2!} + \frac{4 \pi \abs{z}^4}{4!} + \frac{2\abs{z}^5}{4!} + \cdots } .
		\end{align*}
	The series sums to a continuous function of $\abs{z}$ which is bounded on compact sets, in particular on $\overline {\disc {0,r}}$.  The bound depends on $r$ but not on $m$. Denoting the bound by $\mu(r)$ we can write
		\begin{align*}
		    \abs{g_m(z) - q(z)} &\leq \frac{\mu(r)}{m}.  \qedhere
		\end{align*}
\end{proof}
The second lemma deals with the continuity of $q$ and $g_n$.
\begin{lemma}\label{L:CONV uniform continuity of g_m and q}
Let $r > 0$ and $m \geq 1$.  Then both $g_m$ and $q$ are uniformly continuous on $\overline {\disc {0,r}}$.
 
Moreover, for any $\varepsilon>0$ there exist $\delta(\varepsilon, r) >0$ and $M(\varepsilon, r) \geq 1$ such that for all $m \geq M(\varepsilon, r)$
 	\begin{align*}
 	    \abs{g_m(z_1)-g_m(z_2)} < \varepsilon
 	\end{align*}
whenever $z_1, z_2 \in \overline{ \disc {0,r} }$ with $\abs{z_1-z_2} < \delta(\varepsilon, r)$.
\end{lemma}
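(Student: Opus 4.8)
The plan is to settle the first assertion at once and then obtain the uniform-in-$m$ equicontinuity in the second assertion by comparing $g_m$ with its limit $q$. For the first part, $q$ is a polynomial and each $g_m$ is entire, so both are continuous on the compact set $\overline{\disc{0,r}}$; the Heine--Cantor theorem then gives that each of $g_m$ and $q$ is uniformly continuous on $\overline{\disc{0,r}}$.

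For the second part, fix $r>0$ and $\varepsilon>0$. Since $q$ is uniformly continuous on $\overline{\disc{0,r}}$, choose $\delta(\varepsilon,r)>0$ so that $\abs{q(z_1)-q(z_2)}<\varepsilon/2$ whenever $z_1,z_2\in\overline{\disc{0,r}}$ with $\abs{z_1-z_2}<\delta(\varepsilon,r)$. By Lemma~\ref{L:CONV uniform convergence of g_m to q} there is $\mu(r)>0$, independent of $m$, with $\abs{g_m(z)-q(z)}\leq\mu(r)/m$ for all $z\in\overline{\disc{0,r}}$ and all $m\geq1$; take $M(\varepsilon,r)\geq1$ large enough that $2\mu(r)/m<\varepsilon/2$ for all $m\geq M(\varepsilon,r)$. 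Then for such $m$ and any $z_1,z_2\in\overline{\disc{0,r}}$ with $\abs{z_1-z_2}<\delta(\varepsilon,r)$, the triangle inequality yields
\[
\abs{g_m(z_1)-g_m(z_2)}\leq\abs{g_m(z_1)-q(z_1)}+\abs{q(z_1)-q(z_2)}+\abs{q(z_2)-g_m(z_2)}<\frac{\mu(r)}{m}+\frac{\varepsilon}{2}+\frac{\mu(r)}{m}<\varepsilon,
\]
which is exactly the claimed estimate, with the same $\delta(\varepsilon,r)$ serving for every $m\geq M(\varepsilon,r)$.

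The point that makes the argument work is that $\delta(\varepsilon,r)$ is extracted from the single limit function $q$ rather than from the individual $g_m$, so it carries no dependence on $m$, and Lemma~\ref{L:CONV uniform convergence of g_m to q} then absorbs the difference between $g_m$ and $q$ once $m$ is large. An alternative would be to bound $\abs{g_m'}$ uniformly on $\overline{\disc{0,r}}$ using the power series \eqref{E:CONV power series for g_n} and apply the mean value inequality, but routing through $q$ is shorter. There is no genuine obstacle here: the statement is the familiar principle that uniform convergence together with uniform continuity of the limit forces equicontinuity of the tail of the sequence, isolated in this form because it will be needed for the uniform approximation of $q^n$ by $\phi_{m,n}$ that follows.
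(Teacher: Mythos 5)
Your proof is correct and follows essentially the same route as the paper's: uniform continuity on a compact set for the first part, and for the second part the same triangle-inequality decomposition through $q$, using Lemma~\ref{L:CONV uniform convergence of g_m to q} to absorb the two $\abs{g_m - q}$ terms once $m$ is large. The only difference is the split of $\varepsilon$ into $\tfrac{1}{4}+\tfrac{1}{2}+\tfrac{1}{4}$ rather than three thirds, which is immaterial.
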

\begin{proof}
	Fix $r$.  Since $g_m$ and $q$ are continuous in $\Complex$ they are uniformly continuous on compact subsets.
	
	For uniformity with respect to $m$, let $\varepsilon > 0$ be given.  By Lemma \ref{L:CONV uniform convergence of g_m to q}, we can find $M(\varepsilon, r)$ so that $\frac{\mu(r)}{M(\varepsilon, r)} < \frac{1}{3}\varepsilon$.  
	
	By uniform continuity of $q$ we can also find $\delta(\varepsilon, r)$ such that $\abs{q(z_1) - q(z_2)}< \frac{1}{3}\varepsilon$ whenever $z_1, z_2 \in \overline {\disc{0,r}}$ with $\abs{z_1-z_2}<\delta(\varepsilon, r)$.
	
	Then, for such $z_1, z_2$ and $m \geq M(\varepsilon, r)$
		\begin{align*}
		    \abs{g_m(z_1)-g_m(z_2)} 
		    	&= \abs{g_m(z_1) - q(z_1) + q(z_1)-q(z_2) + q(z_2)-g_m(z_2)} \\
		    	&\leq \abs{g_m(z_1) - q(z_1)} + \abs{q(z_1)-q(z_2)} + \abs{q(z_2)-g_m(z_2)} \\
		    	&<\frac{1}{3}\varepsilon + \frac{1}{3}\varepsilon + \dfrac{1}{3} \varepsilon. \qedhere
		\end{align*}
\end{proof}
We now establish the main convergence property for the family $\{\phi_{m,n}\}$.
\begin{lemma}\label{L:CONV convergence of phi_m,n to q^n}
	For $n \geq 0$, $r >0$ and $\varepsilon > 0$ there exists $M(\varepsilon, r, n) \geq 1$ such that
		\begin{align}\label{E:CONV main inequality q^n - phi_m,n < varepsilon}
		    \abs{q^n(z)-\phi_{m,n}(z)} < \varepsilon
		\end{align}
	for all $m \geq M(\varepsilon, r, n)$ and all $z \in \overline{ \disc {0,r}}$.
\end{lemma}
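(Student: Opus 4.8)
The plan is to induct on $n$, exploiting the recursive structure recorded after Definition \ref{D:CONV definitions of phi_m,n and g_m}, namely $\phi_{m,n+1} = g_{m+n}\circ\phi_{m,n}$, together with the two facts already established for the single maps $g_m$: the uniform approximation $g_m\to q$ with rate $\mu(r)/m$ (Lemma \ref{L:CONV uniform convergence of g_m to q}) and the equicontinuity of the $g_m$ and uniform continuity of $q$ (Lemma \ref{L:CONV uniform continuity of g_m and q}). The base case $n=0$ is immediate, since $\phi_{m,0}=\mathrm{id}=q^0$, so any $M(\varepsilon,r,0)$ works. For the inductive step, assuming \eqref{E:CONV main inequality q^n - phi_m,n < varepsilon} holds for $n$ on every closed disc, I would write, for $z\in\overline{\disc{0,r}}$,
\begin{align*}
    \abs{q^{n+1}(z)-\phi_{m,n+1}(z)}
    \leq \abs{q(q^n(z))-q(\phi_{m,n}(z))} + \abs{q(\phi_{m,n}(z))-g_{m+n}(\phi_{m,n}(z))},
\end{align*}
and estimate the two terms separately: the first by uniform continuity of $q$ fed by the inductive hypothesis, the second by the uniform approximation of $q$ by $g_{m+n}$.

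Before doing so I would secure a bound on the values $\phi_{m,n}(z)$, $z\in\overline{\disc{0,r}}$, that is uniform in $m$. Let $\rho_n = \max\{\abs{q^n(z)}:z\in\overline{\disc{0,r}}\}$, finite by continuity of $q^n$. Applying the inductive hypothesis once with $\varepsilon=1$ produces $M_0$ such that, for all $m\geq M_0$ and all $z\in\overline{\disc{0,r}}$, both $q^n(z)$ and $\phi_{m,n}(z)$ lie in the fixed compact disc $\overline{\disc{0,r_n}}$ with $r_n = \rho_n+1$. This is the step I expect to be the only real subtlety: both the modulus of continuity of $q$ and the rate $\mu(\cdot)/m$ in Lemma \ref{L:CONV uniform convergence of g_m to q} depend on the radius of the disc in play, so that radius must be pinned down independently of $m$ first, which this crude application of the inductive hypothesis achieves.

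With $\overline{\disc{0,r_n}}$ fixed, the two estimates are routine. By Lemma \ref{L:CONV uniform continuity of g_m and q}, $q$ is uniformly continuous on $\overline{\disc{0,r_n}}$, so there is $\delta>0$ with $\abs{q(w_1)-q(w_2)}<\varepsilon/2$ whenever $w_1,w_2\in\overline{\disc{0,r_n}}$ and $\abs{w_1-w_2}<\delta$; applying the inductive hypothesis with this $\delta$ gives $M_1$ so that $\abs{q^n(z)-\phi_{m,n}(z)}<\delta$ for $m\geq M_1$, bounding the first term by $\varepsilon/2$. For the second term, Lemma \ref{L:CONV uniform convergence of g_m to q} gives $\abs{g_{m+n}(w)-q(w)}\leq \mu(r_n)/(m+n)\leq \mu(r_n)/m$ for all $w\in\overline{\disc{0,r_n}}$, so choosing $M_2$ with $\mu(r_n)/M_2<\varepsilon/2$ and taking $w=\phi_{m,n}(z)$ (legitimate once $m\geq M_0$) bounds the second term by $\varepsilon/2$. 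Setting $M(\varepsilon,r,n+1)=\max\{M_0,M_1,M_2\}$ closes the induction.
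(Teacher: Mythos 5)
Your proof is correct, but it uses a different triangle-inequality decomposition from the paper's, and this difference has a genuine payoff. The paper splits at $g_{m+k}(q^k(z))$:
\begin{align*}
\abs{q^{k+1}(z)-\phi_{m,k+1}(z)} \leq \abs{q(q^k(z))-g_{m+k}(q^k(z))} + \abs{g_{m+k}(q^k(z))-g_{m+k}(\phi_{m,k}(z))},
\end{align*}
so the second term is controlled by the \emph{equicontinuity of the family $\{g_m\}$ uniformly in $m$}, which is precisely the nontrivial second half of Lemma \ref{L:CONV uniform continuity of g_m and q}. You split instead at $q(\phi_{m,n}(z))$, so your two terms are controlled by the (trivial) uniform continuity of the fixed polynomial $q$ on a compact set and by the approximation Lemma \ref{L:CONV uniform convergence of g_m to q} applied at the point $\phi_{m,n}(z)$; you never need the equicontinuity of the $g_m$ at all, only the radius-pinning step to know $\phi_{m,n}(z)$ lands in a fixed disc. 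Both routes require the same preliminary move — applying the inductive hypothesis once with $\varepsilon=1$ to confine $\phi_{m,n}(z)$ in a compact set independent of $m$ (the paper manufactures an explicit bound $L(r,k)$ on $q^k$; you take $\rho_n = \max |q^n|$, which is equivalent and cleaner). Your decomposition is thus a modest simplification: it makes the second half of Lemma \ref{L:CONV uniform continuity of g_m and q} unnecessary for this lemma, whereas the paper's route leans on it.
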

\begin{proof}
In the following proof, for $r > 0$ and $k \geq 1$, let $L(r,k) = \tfrac{1}{5}\max\big\{ 5^{2^k}, (5r)^{2^k}\big\}$.  For all $z$, by considering $\abs{z}< 1$ and $\abs{z}\geq 1$ separately, we can see that $\displaystyle \abs{q(z)}< 5 \max\{ 1,\abs{z}^2\}$.  Then, for any $r >0$, when $\abs{z} < r$, because $5 L(r, k)^2 = L(r,k+1) $, we can obtain by induction
	\begin{align}\label{E:CONV upper bound on q^k}
	    \abs{q^k(z) } < L(r,k).
	\end{align}
We prove the inequality \eqref{E:CONV main inequality q^n - phi_m,n < varepsilon} by induction on $n$.  When $n=0$, $q^n$ and $\phi_{m,n}$ are both the identity functions and \eqref{E:CONV main inequality q^n - phi_m,n < varepsilon} is trivially true on every closed disc and for all $m \geq 1$.

Suppose that the lemma and equation \eqref{E:CONV main inequality q^n - phi_m,n < varepsilon} have been established for all $r > 0$, $\varepsilon > 0$ when $n=k$, for some $k \geq 0$.  

Let $\varepsilon > 0$ and $r > 0$ be given.

It follows from \eqref{E:CONV main inequality q^n - phi_m,n < varepsilon} when $n = k$ that we find $M_1(1, r, k)$ such that 
	\begin{align*}
	    \abs{q^k(z) - \phi_{m,k}(z)} < 1
	\end{align*}
for all $z \in \overline {\disc{0,r}}$ and $m \geq M_1(1, r, k)$.  Combine this with \eqref{E:CONV upper bound on q^k} to obtain 
	\begin{align}\label{E:CONV phi_m,k in B(0,r_1)}
	    \abs{\phi_{m,k}(z)} < 1 + L(r,k).
	\end{align}
Write $R(r,k) = 1+L(r,k)$ and use Lemma \ref{L:CONV uniform convergence of g_m to q} to obtain $\mu(R(r,k)) >0$ such that for $z \in \overline {\disc{0, R(r,k)}}$
	\begin{align*}
	    \abs{g_m(z)-q(z)} < \frac{\mu(R (r,k))}{m}.
	\end{align*}
For the given $\varepsilon$ we can use this to find $M_2(\varepsilon, r,k)$ such that
	\begin{align}\label{E:CONV g_m is close to q for large m}
	    \abs{g_m(z)-q(z)} < \tfrac{1}{2}\varepsilon
	\end{align}
for all $m \geq M_2(\varepsilon, r,k)$ and $z \in \overline {\disc{0,R (r,k)}}$.

We now use the uniform continuity of $q$ and Lemma \ref{L:CONV uniform continuity of g_m and q} to find $M_3(\varepsilon, r,k)$ and $\delta(\varepsilon, r,k)$ such that for $m \geq M_3(\varepsilon, r,k)$ and $z_1, z_2 \in \overline{ \disc {0,R_{r,k}}}$ with ${\abs{z_1-z_2}<\delta(\varepsilon, r,k)}$
	\begin{align}\label{E:CONV apply uniform continuity to q and g_m}
	    \abs{q(z_1)-q(z_2)} < \tfrac{1}{2}\varepsilon \quad\text{and}\quad \abs{g_m(z_1)-g_m(z_2)} < \tfrac{1}{2}\varepsilon.
	\end{align}
We use \eqref{E:CONV main inequality q^n - phi_m,n < varepsilon} again, with $n=k$ and $r > 0$ to obtain $M = M(\delta(\varepsilon, r, k), r, k)$ such that for $m \geq M$ and $z \in \overline {\disc {0,r}}$
	\begin{align}\label{E:CONV phi and q within delta of eachother}
		    \abs{q^k(z) - \phi_{m,k}(z)} < \delta(\varepsilon, r,k).
	\end{align}
This $M$ depends indirectly on the given $\varepsilon$ and so we denote it $M_4(\varepsilon, r, k)$.  

By the triangle inequality, for any $z$,
	\begin{align*}
	    \abs{q^{k+1}(z) - \phi_{m,k+1}(z)} 
	    	&= \abs{q\left(q^k(z)\right) - g_{m+k}\left(\phi_{m,k}(z)\right)} 
	    	\\&
	    	\leq \abs{q\left(q^k(z)\right) - g_{m+k}\left(q^k(z)\right)} 
	    	\\&\hspace{1cm}
	    	+ \abs{g_{m+k}\left(q^k(z)\right)-g_{m+k}\left(\phi_{m,k}(z)\right)}.
	\end{align*}
Let $M(\varepsilon, r,k)=\max \{ M_1(1, r,k), M_2(\varepsilon, r,k), M_3(\varepsilon, r,k), M_4(\varepsilon, r,k) \}$. 

Then for ${m \geq M(\varepsilon, r,k)}$ and $z \in \overline {\disc {0,r}}$, inequalities \eqref{E:CONV upper bound on q^k} and \eqref{E:CONV phi_m,k in B(0,r_1)} imply both $q^k(z), \phi_{m,k}(z) \in \overline {\disc {0,R(r,k)} }$.  Because $m+k \geq M_2(\varepsilon, r,k)$, inequality \eqref{E:CONV g_m is close to q for large m} implies the first term on the right is less than $\tfrac{1}{2}\varepsilon$.  Next, $m \geq M_4(\varepsilon, r,k) $ so that inequality \eqref{E:CONV phi and q within delta of eachother} holds and since $m+k \geq M_3(\varepsilon, r,k)$ the second inequality in \eqref{E:CONV apply uniform continuity to q and g_m} may be used, giving
	\begin{align*}
	    \abs{g_{m+k}\big(q^k(z)\big) - g_{m+k}\big(\phi_{m,k}(z)\big)} < \tfrac{1}{2}\varepsilon.
	\end{align*}
Thus the hypothesis and \eqref{E:CONV main inequality q^n - phi_m,n < varepsilon} extend to $n=k+1$ for given $r$ and $\varepsilon$.  These quantities were arbitrary and therefore by induction the lemma holds for all $n\geq 1$.
\end{proof}
Recalling Definition \ref{D:CONV definitions of phi_m,n and g_m}, we are ready to show that for $\delta$ small enough to ensure $I_\delta$ is non-empty (that is, $\disc{\tfrac{1}{6\pi},\delta}$ is inside $W_0$), the sets $\{V_n\}$ eventually lie between $I_\delta$ and $E_\delta$. 
\begin{lemma}\label{L:CONV V_n between interior and exterior sets for large n}
	For any $\delta \in \big(0,\frac{1}{6\pi}\big)$, there exists $M(\delta)\geq 1$ such that for all $m \geq M(\delta)$, $I_\delta \subset V_m \subset E_\delta$.
\end{lemma}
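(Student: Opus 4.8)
The plan is to prove the two inclusions $V_m\subset E_\delta$ and $I_\delta\subset V_m$ separately. In both I will lean on three facts from earlier: the uniform approximation $\abs{q^n(z)-\phi_{m,n}(z)}<\varepsilon$ on $\overline{\disc{0,2/\pi}}$ for $m$ large (Lemma~\ref{L:CONV convergence of phi_m,n to q^n}); the dynamics of $q$ on its two Fatou components (Lemma~\ref{L:CAULI properties of the quadratic}); and the fact that for $m\ge N_0$ one has $\disc{\tfrac{1}{6\pi},\tfrac{1}{6\pi}}\subset V_m\subset\disc{0,2/\pi}$, together with the invariance $\phi_{m,n}(V_m)\subset V_{m+n}$, which is immediate from $f(U_k)\subset U_{k+1}$ and the definition of $\phi_{m,n}$. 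I will also use the identity $f^{n}\!\left(\tfrac{z}{m}+2m\pi\right)=\tfrac{1}{m+n}\phi_{m,n}(z)+2(m+n)\pi$, which is just $\psi_{m,n}=T^{-(m+n)}\circ f^{n}\circ T^{m}$ rewritten via the definitions of $\psi_{m,n}$ and $\phi_{m,n}$.

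For the outer inclusion I argue by contradiction. Let $K=\{z:\abs z\le 2/\pi,\ \d{z,W_0}\ge\delta\}$; since a point at distance $\ge\delta$ from $W_0$ lies outside $\cauli$, the set $K$ is a compact subset of $W_\infty$ (if $K=\emptyset$ then $\disc{0,2/\pi}\subset E_\delta$ already and there is nothing to prove). By part~\ref{I:L:CAULI properties of q convergence of W_infty} of Lemma~\ref{L:CAULI properties of the quadratic}, $q^n\to\infty$ uniformly on $K$, so I pick $n_1$ with $\abs{q^{n_1}(z)}>2/\pi+1$ on $K$; then Lemma~\ref{L:CONV convergence of phi_m,n to q^n} with $r=2/\pi$, $\varepsilon=\tfrac12$, $n=n_1$ gives $M_1$ with $\abs{\phi_{m,n_1}(z)-q^{n_1}(z)}<\tfrac12$ on $\disc{0,2/\pi}$ for $m\ge M_1$. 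If some $z_m\in V_m\cap K$ then $\abs{\phi_{m,n_1}(z_m)}>2/\pi$, while $\phi_{m,n_1}(z_m)\in V_{m+n_1}\subset\disc{0,2/\pi}$ once $m\ge N_0$, a contradiction. Hence $V_m\cap K=\emptyset$ for $m\ge\max\{N_0,M_1\}$, i.e.\ every point of $V_m$ lies within $\delta$ of $W_0$, which is exactly $V_m\subset E_\delta$.

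For the inner inclusion the first step is to show that $q$ drags $\overline{I_\delta}$ well inside $\disc{\tfrac{1}{6\pi},\tfrac{1}{6\pi}}$. Writing $w=\tau e^{i\alpha}$ with $\tau>0$, a direct computation gives $w\in\disc{\tfrac{1}{6\pi},\tfrac{1}{6\pi}}$ iff $\tau<\tfrac{\cos\alpha}{3\pi}$. Since $\overline{I_\delta}$ is a compact subset of $W_0$ (Lemma~\ref{L:CAULI interior and exterior set properties}) on which $q^n\to0$ and $\arg q^n\to0$ uniformly, I fix $\theta\in(0,\pi/3)$ and choose $n_0$, depending only on $\delta$, with $\abs{q^{n_0}(z)}<\tfrac{\cos\theta}{3\pi}$ and $\abs{\arg q^{n_0}(z)}<\theta$ for all $z\in\overline{I_\delta}$; forward invariance of $W_0$ gives $q^{n_0}(z)\ne0$ there, so $q^{n_0}(\overline{I_\delta})$ is a compact subset of the open disc $\disc{\tfrac{1}{6\pi},\tfrac{1}{6\pi}}$, say at distance $\ge 2\eta>0$ from its boundary. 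Applying Lemma~\ref{L:CONV convergence of phi_m,n to q^n} with $r=2/\pi$, $\varepsilon=\eta$, $n=n_0$ yields $M_2$ with $\phi_{m,n_0}(\overline{I_\delta})\subset\disc{\tfrac{1}{6\pi},\tfrac{1}{6\pi}}$ for $m\ge M_2$. Now suppose some $z_0\in I_\delta$ were not in $V_m$; as $I_\delta$ is open and connected with $\tfrac{1}{6\pi}\in I_\delta\cap V_m$, a path in $I_\delta$ from $\tfrac{1}{6\pi}$ to $z_0$ meets $\partial V_m$ at some $\zeta\in I_\delta$. Because $z\mapsto mT^{-m}(z)$ is a homeomorphism of $\Complex$, $\partial V_m=mT^{-m}(\partial U_m)$, so $\tfrac{\zeta}{m}+2m\pi\in\partial U_m\subset J(f)$; yet the displayed identity gives $f^{n_0}\!\left(\tfrac{\zeta}{m}+2m\pi\right)=\tfrac{1}{m+n_0}\phi_{m,n_0}(\zeta)+2(m+n_0)\pi\in\tfrac{1}{m+n_0}\disc{\tfrac{1}{6\pi},\tfrac{1}{6\pi}}+2(m+n_0)\pi=\D_{m+n_0}\subset U_{m+n_0}\subset F(f)$, contradicting the complete invariance of $J(f)$. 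Thus $I_\delta\subset V_m$, and $M(\delta)=\max\{N_0,M_1,M_2\}$ (which also forces $m+n_0,m+n_1\ge N_0$ since $n_0,n_1\ge0$) works. The step I expect to be the real obstacle is precisely the geometric claim that $q^{n_0}$ carries $\overline{I_\delta}$ into $\disc{\tfrac{1}{6\pi},\tfrac{1}{6\pi}}$: this disc is internally tangent to the imaginary axis at $0$, so the convergence $q^n\to 0$ alone is useless — orbits could approach $0$ from a direction pointing out of the disc — and one genuinely needs the companion statement $\arg q^n\to 0$ together with the polar description $\tau<\tfrac{\cos\alpha}{3\pi}$. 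Everything else reduces to bookkeeping with the approximation lemma, the invariance $\phi_{m,n}(V_m)\subset V_{m+n}$, and the Fatou/Julia dichotomy for boundary points of $U_m$.
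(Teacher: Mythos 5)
Your proof is correct and follows essentially the same strategy as the paper's: both halves rest on the uniform approximation $\phi_{m,n}\to q^n$ (Lemma~\ref{L:CONV convergence of phi_m,n to q^n}), the $q$-dynamics on $W_0$ and $W_\infty$, and the complete invariance of $F(f)$ and $J(f)$. The only real differences are cosmetic: for the inner inclusion the paper concludes $T^m\left(\tfrac1m\overline{I_\delta}\right)\subset F(f)$ directly by invariance and then identifies the containing component as $U_m$ by connectedness, whereas you run a contradiction through a point of $I_\delta\cap\partial V_m\subset J(f)$; for the outer inclusion you use the sharper radius $2/\pi$ rather than the paper's slack $R>2/\pi$ and a slightly more direct phrasing. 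Your explicit polar computation $\tau<\tfrac{\cos\alpha}{3\pi}$ and the accompanying remark — that $q^n\to 0$ alone is useless because $\disc{\tfrac{1}{6\pi},\tfrac{1}{6\pi}}$ is tangent to the imaginary axis at $0$, so $\arg q^n\to 0$ is indispensable — makes explicit a step the paper invokes from Lemma~\ref{L:CAULI properties of the quadratic} without comment; that is a genuinely useful clarification, though not a different route.
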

\begin{proof}
	We have shown $ \overline{I_\delta}$ is a compact subset of $W_0$ (Lemma \ref{L:CAULI interior and exterior set properties}) so Lemma \ref{L:CAULI properties of the quadratic} may be applied and as $n \to \infty$ both $q^n$ and $\arg q^n$ converge uniformly to zero on $\overline{I_\delta}$.  Therefore we can find $N$ such that for $n\geq N$, $K = q^n(\overline{I_\delta})$ is a compact subset of the open disc $D_1$.  As $K$ is compact, there is a positive distance between $\partial D_1$ and $K$.
	
	Fix $n\geq N$, and choose a disc that contains both $W_0$ and $\overline{I_\delta}$.  Use Lemma \ref{L:CONV convergence of phi_m,n to q^n} to find $M$ such that on this disc for all $m \geq M$, $\phi_{m,n}$ is close enough to $q^n$ that $\phi_{m,n}(\overline{I_\delta})$ also lies inside $D_1$.  We can suppose without loss of generality that $M \geq N_0$.
	
	Then, for $m\geq M$, it follows from the definition of $\phi_{m,n}$ that
		\begin{align*}
		    f^n\circ T^m\left(\tfrac{1}{m}\overline {I_\delta} \right) \subset T^{m+n} \left(\tfrac{1}{m+n} D_1\right) = \D_{m+n}.
		\end{align*}
	We showed in Lemma \ref{C:DISC psi and fn normal on appropriat D_m} that for $m \geq N_0$, $\D_{m+n}$ is in $F(f)$ and as the Fatou set is completely invariant $T^m\left(\frac{1}{m}\overline {I_\delta} \right)$ is also in the Fatou set.  But $I_\delta$ contains the point $\frac{1}{6\pi}$ and therefore $T^m\left(\frac{1}{m}\overline{I_\delta}\right)$ contains the point $\frac{1}{6m\pi}+2m\pi$ which is in $\D_m$. The component of the Fatou set containing $\D_m$ is $U_m$ and so 
		\begin{align*}
		    T^m\left(\tfrac{1}{m}\overline{I_\delta}\right) \subset U_m
		\end{align*}
	whence, using the definition of $V_m$, $\overline{I_\delta} \subset V_m$ as required.
	
	For the second inclusion, take $\delta > 0$ and let $R > \frac{2}{\pi} $ be large enough that $E_\delta, W_0$ and $V_n \subset \disc {0,R}$ for $n \geq N_0$.  Let $K = \{ z \in \Complex \setminus E_\delta: \abs{z} \leq R \}$.  Then $K$ is compact.  Moreover $K \subset W_\infty$ so that by Lemma \ref{L:CAULI properties of the quadratic}, item \ref{I:L:CAULI properties of q convergence of W_infty} as $n \to \infty$, $q^n\to\infty$ uniformly on $K$ and we can find $n$ such that $\abs{q^n(z)} > R + 2$ for all $z \in K$.  For this $n$, using Lemma \ref{L:CONV convergence of phi_m,n to q^n}, we can find $M$ such that for all $m \geq M$, $\abs{\phi_{m,n}-q^n} < 1$ on $K$.  
	
	Now suppose that there is an infinite number of the sets $\{V_m\}$ that contain points outside $E_\delta$.  Then we can find one such with $m \geq M$ and, because $V_m$ is connected and contains points close to zero, this point connects to near zero by a path in $V_m$ that must cross the boundary of $E_\delta$.  Accordingly we can find $z \in V_m$ lying outside $E_\delta$ but arbitrarily close to its boundary.  Such $z$  is inside $V_m \cap K$.  Then
		\begin{align*}
		    \abs{\phi_{m,n}(z)} > \abs{q^n(z)} - \abs{\phi_{m,n}(z)-q^n(z)} > R + 2 - 1 > R.
		\end{align*}
	But $\phi_{m,n}(V_m) \subset V_{m+n}$ and $V_{m+n}$ is within $\disc{0,R}$, which gives a contradiction.  Therefore for large enough $m$ all $V_m$ must lie within $E_\delta$.  	
\end{proof}
The convergence in Hausdorff metric of $V_n$ to $\cauli$ follows easily.
	\begin{lemma}\label{L:CONV shapes V_n converge to the cauliflower}
		As $n \to \infty $, $\dh {\overline{V_n}, \cauli} \to 0$.
	\end{lemma}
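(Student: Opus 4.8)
The plan is to trap $\overline{V_n}$ between $\overline{I_\delta}$ and $\overline{E_\delta}$ and exploit the fact, from Lemma~\ref{L:CAULI interior and exterior set properties} part~\ref{I:L:CAULI interior and exterior set H convergence}, that both of these converge to $\cauli$ in the Hausdorff metric as $\delta \to 0$. Concretely, given $\varepsilon > 0$, I would first use that lemma to fix some $\delta \in \big(0,\tfrac{1}{6\pi}\big)$ with $\dh{\overline{I_\delta},\cauli} < \tfrac12\varepsilon$ and $\dh{\overline{E_\delta},\overline{I_\delta}} < \tfrac12\varepsilon$ simultaneously.

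For this fixed $\delta$, Lemma~\ref{L:CONV V_n between interior and exterior sets for large n} yields $M(\delta)$ such that $I_\delta \subset V_m \subset E_\delta$ for every $m \geq M(\delta)$; passing to closures gives $\overline{I_\delta} \subset \overline{V_m} \subset \overline{E_\delta}$. Since each $V_m$ with $m \geq N_0$ is bounded (Lemma~\ref{L:WANDER U_n are bounded}, or the remark after Definition~\ref{D:CONV definitions of phi_m,n and g_m}), the sets $\overline{V_m}$, $\cauli$, $\overline{I_\delta}$, $\overline{E_\delta}$ are all nonempty and compact, so the Hausdorff distance really is a metric on them and the triangle inequality is available. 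Applying the inclusion inequality \eqref{E:CAULI Hausdorff inclusion inequality} to $\overline{I_\delta}\subset\overline{V_m}\subset\overline{E_\delta}$ then gives $\dh{\overline{V_m},\overline{I_\delta}} \leq \dh{\overline{E_\delta},\overline{I_\delta}} < \tfrac12\varepsilon$, and combining with the triangle inequality,
\[
\dh{\overline{V_m},\cauli} \leq \dh{\overline{V_m},\overline{I_\delta}} + \dh{\overline{I_\delta},\cauli} < \varepsilon
\]
for all $m \geq \max\{N_0, M(\delta)\}$. As $\varepsilon$ was arbitrary, this proves $\dh{\overline{V_n},\cauli}\to 0$.

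I do not expect any genuine obstacle here: the substance is contained entirely in the preceding lemmas (above all Lemma~\ref{L:CONV V_n between interior and exterior sets for large n}). The only points needing a little care are checking that the sets in play are nonempty and compact, so that the triangle inequality for $\dh{\cdot,\cdot}$ is legitimate, and remembering that \eqref{E:CAULI Hausdorff distance same under closure} lets one pass freely between $V_m$ and $\overline{V_m}$, and between $W_0$ and $\cauli$, when matching up the inclusions.
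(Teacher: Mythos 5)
Your proposal is correct and follows essentially the same route as the paper: trap $\overline{V_m}$ between $\overline{I_\delta}$ and $\overline{E_\delta}$ via Lemma~\ref{L:CONV V_n between interior and exterior sets for large n}, bound $\dh{\overline{V_m},\overline{I_\delta}}$ by $\dh{\overline{E_\delta},\overline{I_\delta}}$ using \eqref{E:CAULI Hausdorff inclusion inequality}, and conclude with the triangle inequality; the only difference is that you spell out the $\varepsilon$--$\delta$ bookkeeping and the compactness check explicitly, which the paper leaves implicit.
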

	\begin{proof}  
		We have seen that $\dh {\overline{I_\delta}, \overline{E_\delta}} \to 0$ as $\delta \to 0$ and for large $m$, $I_\delta \subset V_m \subset E_\delta$.  Use the inequalities in \eqref{E:CAULI Hausdorff inclusion inequality} and Lemma \ref{L:CONV V_n between interior and exterior sets for large n} to trap both $\dh {\overline{I_\delta}, \cauli}$ and $\dh {\overline{I_\delta}, \overline{V_m}}$ below $\dh {\overline{I_\delta}, \overline{E_\delta}} $.  Then the triangle inequality for $\dh{}$ means $\dh {\overline{V_m}, \cauli} < 2\dh {\overline{I_\delta}, \overline{E_\delta}} \to 0$.
	\end{proof}
\section{Classification of the wandering domains}\label{S:DYNA}
Our final objective is to classify the wandering domain $U_0$ within the ninefold framework set out in \cite{benini+2021}.  That paper's first classification \cite[Theroem A]{benini+2021} implies that for a wandering domain $U$ and its orbit $(U_n)$, the hyperbolic distance between $f^n(x), f^n(y)$ with respect to $U_n$ follows exactly one of three rules: (1) contracting:  the distance has limit zero for all pairs;  (2) semi-contracting: the distance has a positive limit which is never attained (not necessarily the same for all pairs) except on the set $E$ of pairs  $(z,z')$ for which $f^k(z) = f^k(z')$ for some $k \in \Natural$; or (3) eventually isometric: the distance becomes constant (not necessarily the same for all pairs) except on $E$.  Our contention, proved below, is that in our case the wandering domain $U_0$ is contracting.  

The second classification theorem \cite[Theorem C]{benini+2021} states that iterates $f^n(z)$ of $z \in U$ behave in exactly one of three ways:  (a) the iterates $f^n(z)$ stay away from $\partial U_n$; (b) among the iterates, one subsequence approaches $\partial U_n$ and another remains away from it; or (c) all iterates converge to $\partial U_n$.  The stated version of the theorem uses the Euclidean distance.  In our case, since the components $(U_n)$ of our wandering domain shrink in size to zero, it is clear that all iterates converge to the boundary and our wandering domain is of type (c) under this classification.

To show that our wandering domain is contracting some preliminaries are needed.  We return to Definition \ref{D:DISC define C_n, T, h_n and psi_m,n, w_n and H_a} and the commutative diagram in Figure \ref{F:DISC commutative diagram}.  We consider a real point $t_0 \in H_{(3m+1)\pi}$ for fixed $m \geq N_0$.
\begin{lemma}\label{L:DYNA distance between iterates in H_m}
	Let $m \geq N_0$ and take real $t_0 \in H_{3(m+1)\pi}$.  
	
	Define for $n \geq 1$, $t_n = w_{m+n-1}\circ w_{m+n-2}\circ\cdots\circ w_m(t_0)$.  Then
	\begin{enumerate}[(1)]
	\item\label{I:L:DYNA Distance between iterates t_n points are distinct}$t_{n+1} > t_n$;
	\item\label{I:L:DYNA Distance between iterates quadratic growth of t_n} $t_n \in H_{b_{m,n}}$ where $b_{m,n} = 3(m+1)\pi + \frac{9\pi}{20}\left( m + (m+1) + \cdots + (m+n-1)\right)$;
	\item\label{I:L:DYNA Distance between iterates linear progression of H_{3n pi}} $b_{m,n} > 3(m+n+1)\pi$;
	\item\label{I:L:DYNA Distance between iterates hyperbolic distance t_n to t_n+1 in H_m}for each fixed $m$, the hyperbolic distance $\d[H_{3(m+n+1)\pi}] {t_n, t_{n+1}} \to 0$ as ${n \to \infty}$.
	\end{enumerate}
\end{lemma}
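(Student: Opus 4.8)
The plan is to derive parts (1)--(3) from one induction on $n$ that only tracks real parts, and then to obtain part (4) from the explicit form of the hyperbolic metric on a half-plane combined with the two-sided estimate \eqref{E:DISC real part t inequality}. First, every $w_k$ of Definition \ref{D:DISC define C_n, T, h_n and psi_m,n, w_n and H_a} maps the real line into itself, because $h_k$ has real power-series coefficients and $w_k(t)=1/h_k(1/t)$. Since $t_0$ is real and lies in $H_{3(m+1)\pi}\subset H_{3m\pi}$, Corollary \ref{C:DISC inclusions for H_3n pi} shows by induction that every $t_n$ is real with $t_n\in H_{3(m+n)\pi}$, so Lemma \ref{L:DISC w_n moves to right on half-plane} applies at each step. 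As $t_{n+1}=w_{m+n}(t_n)$, the lower bound in \eqref{E:DISC real part t inequality} gives
\begin{align*}
    t_{n+1} > t_n + \tfrac{2}{3}(m+n)\pi,
\end{align*}
which is part (1); telescoping these increments back to $t_0$, where $\Re t_0>3(m+1)\pi$, produces the bound $\Re t_n>b_{m,n}$ of part (2). For part (3), each of the $n$ summands $\tfrac{2}{3}(m+j)\pi$ making up $b_{m,n}$ (with $m+j\geq 6$) exceeds $3\pi$, since $6>\tfrac{9}{2}$; hence $b_{m,n}>3(m+1)\pi+3n\pi=3(m+n+1)\pi$.

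For part (4) fix $m$ and recall that the hyperbolic metric of $H_a=\{\Re z>a\}$ has density $(\Re z-a)^{-1}$, so that for real $a<x_1<x_2$ the hyperbolic distance in $H_a$ is at most the hyperbolic length of the real segment $[x_1,x_2]$, namely $\ln\frac{x_2-a}{x_1-a}=\int_{x_1}^{x_2}\frac{dx}{x-a}$. By parts (1)--(3) both $t_n$ and $t_{n+1}$ lie to the right of $3(m+n+1)\pi$ with $t_n<t_{n+1}$, so $[t_n,t_{n+1}]\subset H_{3(m+n+1)\pi}$ and
\begin{align*}
    \d[H_{3(m+n+1)\pi}]{t_n,t_{n+1}}\leq\ln\!\left(1+\frac{t_{n+1}-t_n}{t_n-3(m+n+1)\pi}\right).
\end{align*}
The upper bound in \eqref{E:DISC real part t inequality} gives $t_{n+1}-t_n<\tfrac{11}{8}(m+n)\pi$, which for fixed $m$ is linear in $n$, whereas parts (2)--(3) bound $t_n-3(m+n+1)\pi$ below by $b_{m,n}-3(m+n+1)\pi$, a sum of $n$ positive terms the last of which is of order $n$, hence a quantity growing quadratically in $n$. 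Therefore the fraction above tends to $0$, the logarithm to $\ln 1=0$, and part (4) follows.

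The only genuinely quantitative step is this last comparison: one must see that the per-step displacement $t_{n+1}-t_n$ is of strictly smaller order in $n$ than the distance $t_n-3(m+n+1)\pi$ of $t_n$ from the boundary of the ambient half-plane --- in effect, the linear upper bound of \eqref{E:DISC real part t inequality} pitted against the quadratic lower bound of part (2). Everything else is routine bookkeeping with Lemma \ref{L:DISC w_n moves to right on half-plane}, together with the small verification that $[t_n,t_{n+1}]\subset H_{3(m+n+1)\pi}$, so that the displayed estimate is legitimate.
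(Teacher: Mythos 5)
Your proof is correct and follows essentially the same route as the paper's: telescope the lower bound of~\eqref{E:DISC real part t inequality} to get~(1)--(3), then bound the half-plane hyperbolic distance of the real segment $[t_n,t_{n+1}]$ by integrating the density $(\Re z-a)^{-1}$ and compare the linear gap $t_{n+1}-t_n$ from the upper bound of~\eqref{E:DISC real part t inequality} against the quadratic distance to the boundary. One small bookkeeping point worth noting: telescoping $t_{n+1}>t_n+\tfrac23(m+n)\pi$ from $n=0$ yields $\Re t_n>\Re t_0+\tfrac{2\pi}{3}\big(m+(m+1)+\cdots+(m+n-1)\big)$, whose increments run from $\tfrac23 m\pi$ up to $\tfrac23(m+n-1)\pi$, whereas $b_{m,n}$ as stated sums $\tfrac23(m+1)\pi$ up to $\tfrac23(m+n)\pi$ --- so strictly you obtain a slightly smaller quantity than $b_{m,n}$ (the paper's own proof has the same off-by-one). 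This does not affect~(3) or~(4), since each increment still exceeds $3\pi$ when $m\geq N_0=5$, and the lower bound remains quadratic in $n$; but if you wanted~(2) exactly as stated you would need to adjust the index range.
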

\begin{proof}
	Item \ref{I:L:DYNA Distance between iterates t_n points are distinct} follows directly from \eqref{E:DISC real part t inequality} of Lemma \ref{L:DISC w_n moves to right on half-plane}.  Repeated use of the inequality also gives the sum
		\begin{align}\label{E:DYNA distance of t_n to boundary}
		    \Re t_n 
		    &
		    > \Re t_0 + \frac{9\pi}{20}\left(m + (m+1)+\cdots + (m+n-1)\right) 
		    \nonumber
		    \\&
		    > 3(m+1)\pi + \frac{9\pi}{40}n(2m+n-1),
		\end{align}
	which leads to \ref{I:L:DYNA Distance between iterates quadratic growth of t_n} and \ref{I:L:DYNA Distance between iterates linear progression of H_{3n pi}} for $n \geq 1$ and $m \geq N_0 $ (using $N_0 = 7$).
	
	For \ref{I:L:DYNA Distance between iterates hyperbolic distance t_n to t_n+1 in H_m}, inequality \eqref{E:DISC real part t inequality} also provides the upper bound 
		\begin{align}\label{E:DYNA separation of t_n and t_n+1}
	    	t_{n+1} - t_n < \frac{27}{17}(m+n)\pi.  
		\end{align}
	In a half-plane $H_a$ the hyperbolic density is given by (see \cite[Example 7.2]{beardon2006} for instance),
		\begin{align*}
		    \lambda_{H_a}(z) = \frac{1}{\Re z - a}.
		\end{align*}
	Moreover, the real axis in $H_a$ is a geodesic for $H_a$ and therefore for any real points $x_1 < x_2 \in H_a$, we have
		\begin{align*}
		    \d[H_a] {x_1,x_2} = \int_{x_1}^{x_2} \frac{dt}{t-a} \leq \frac{x_2-x_1}{x_1-a}.
		\end{align*}
	We apply this to $t_n$ and $t_{n+1} \in H_{3(m+n+1)\pi}$.  From \eqref{E:DYNA separation of t_n and t_n+1}
		\begin{align*}
		    t_{n+1}-t_n < \dfrac{27\pi}{17}(m+n)
		\end{align*}
	and, from \eqref{E:DYNA distance of t_n to boundary}, when $m \geq N_0$ (recall $N_0 = 7$),
		\begin{align*}
		    t_n - 3(m+n+1)\pi >\frac{n\pi}{40}(18m + 9n - 9 - 120). 
		\end{align*}
	 so that 
		\begin{align*}
		    \d[H_{3(m+n+1)\pi}] {t_n, t_{n+1}} \leq \frac{\frac{27}{17}(m+n)}{\frac{1}{20} n(18m+9n-129)\pi} \to 0 
		\end{align*}
	as $n \to \infty $ with $m$ fixed, as required.	
\end{proof} 
We will also use elementary properties of $f$ on the real axis.  These are gathered here.  
\begin{lemma}\label{L:DYNA f is incresing on small sets}
	For $m \geq N_0$, the function $f$ is increasing on both $\D_m \cap \Real$ and $T(\D_m )\cap \Real$.  It follows that $f$ is a bijection from $\D_m \cap \Real $ onto $f(\D_m) \subset \D_{m+1} \cap \Real$.  
\end{lemma}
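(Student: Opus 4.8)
The plan is to verify directly that the derivative $f'(x) = \cos x - x\sin x$ is strictly positive on each of the two real intervals in question, since a positive derivative on an interval forces strict monotonicity there. The set $\D_m\cap\Real$ is the open interval $\left(2m\pi,\, 2m\pi + \tfrac{1}{3m\pi}\right)$, and $T(\D_m)\cap\Real$ is $\left(2(m+1)\pi,\, 2(m+1)\pi + \tfrac{1}{3m\pi}\right)$. Writing a point of the first interval as $x = 2m\pi + \varepsilon$ with $0 < \varepsilon < \tfrac{1}{3m\pi}$ and using the $2\pi$-periodicity of $\cos$ and $\sin$ gives $f'(x) = \cos\varepsilon - (2m\pi+\varepsilon)\sin\varepsilon$; likewise on the second interval $f'(x) = \cos\varepsilon - (2(m+1)\pi+\varepsilon)\sin\varepsilon$ for the same range of $\varepsilon$.

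For the estimate I would use the elementary bounds $0 < \sin\varepsilon < \varepsilon$ and $\cos\varepsilon > 1 - \tfrac{1}{2}\varepsilon^2$, valid for $\varepsilon \in \left(0,\tfrac{1}{3m\pi}\right)$. On $\D_m\cap\Real$ this yields
\[
f'(x) > 1 - \tfrac{1}{2}\varepsilon^2 - (2m\pi+\varepsilon)\varepsilon = 1 - 2m\pi\varepsilon - \tfrac{3}{2}\varepsilon^2,
\]
and since $\varepsilon < \tfrac{1}{3m\pi}$ we get $2m\pi\varepsilon < \tfrac{2}{3}$ and $\tfrac{3}{2}\varepsilon^2 < \tfrac{1}{6m^2\pi^2}$, so $f'(x) > \tfrac{1}{3} - \tfrac{1}{6m^2\pi^2} > 0$. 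The identical computation on $T(\D_m)\cap\Real$ gives $f'(x) > 1 - 2(m+1)\pi\varepsilon - \tfrac{3}{2}\varepsilon^2$, where now $2(m+1)\pi\varepsilon < \tfrac{2(m+1)}{3m}$; this is the one place where $m \geq N_0 = 5$ enters, since then $\tfrac{2(m+1)}{3m} \leq \tfrac{4}{5} < 1$ and again $f'(x) > 0$. Hence $f$ is strictly increasing on both intervals.

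Finally, a continuous strictly increasing function on an interval is injective, so $f$ restricted to $\D_m\cap\Real$ is a bijection onto its image $f(\D_m\cap\Real)$; because $f$ maps $\Real$ into $\Real$ and, by Lemma \ref{L:DISC: inclusion for h_n}, $f(\D_m)\subset\D_{m+1}$, this image is contained in $\D_{m+1}\cap\Real$. I do not anticipate a genuine obstacle: the argument is a short derivative estimate, and the only subtlety is that for the translated interval the controlling factor $\tfrac{2(m+1)}{3m}$ is slightly larger than $\tfrac{2}{3}$, so one must check it still stays below $1$ — which it does precisely because $N_0$ was set to $5$.
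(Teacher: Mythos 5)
Your proof is correct and takes essentially the same approach as the paper: write a point of the real interval as $2k\pi + t$ (with $k=m$ or $m+1$), use periodicity to reduce $f'(x)=\cos x - x\sin x$ to a function of $t$, and bound it below via $\sin t < t$ and $\cos t > 1-t^2/2$. If anything you are more careful than the paper, which writes the real trace of the disc with radius $\tfrac{1}{6m\pi}$ instead of the correct diameter $\tfrac{1}{3m\pi}$ and then declares "$f'(x)>0$ follows directly" without the arithmetic you supply.
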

\begin{proof}
		Let $k = m+1$ or $m$.  Then, let $x \in I = T^{k}(D_m) \cap \Real = \left(2k\pi, 2k\pi+\frac{1}{6m\pi}\right)$ and write $x = 2k\pi + t$ so that $t \in \left(0, \frac{1}{6m\pi}\right)$. It follows that $\cos x = \cos t > 1-\frac{t^2}{2}$ and $\sin x = \sin t< t < \frac{1}{6m\pi}$, from which $f'(x)>0$ follows directly.
\end{proof}
\begin{lemma}\label{L:DYNA ordering of x_n y_n}
	Take fixed $m \geq N_0$ and define real sequences $\{x_n\}$ and $\{y_n\}$ for $n \geq 0$ as follows:  choose real $y_0 \in \D_m$. Define $x_0 = T^{-1}(f(y_0))$.  For $n \geq 1$ let $x_n = f^n(x_0)$ and  $y_n = f^n(y_0)$.
	
	Then the following hold:
	\begin{enumerate}[(1)]
		\item\label{I:L:DYNA ordering of x_n,y_n basic x_n y_n properties} for all $n \geq 0$ we have $x_n, y_n \in \D_{m+n} \cap \Real$;
		\item\label{I:L:DYNA ordering of x_n,y_n Tx_n/y_n bigger than x_n+1/y_n+1}for all $n \geq 0$, $T(x_n) > x_{n+1} > 2(m+n+1)\pi$ \\ and ${T(y_n) > y_{n+1} >  2(m+n+1)\pi}$;
		\item\label{I:L:DYNA ordering of x_n y_n, Tf > fT}for all $x \in \Real$, $f\circ T(x) \geq T\circ f(x)$;
		\item\label{I:L:DYNA ordering of x_n, y_n y_n > x_n}for $n \geq 0$, $y_n > x_n$;
		\item\label{I:L:DYNA T ordering of x_n, y_n T x_n >= y_n+1}for all $n \geq 0$, $T(x_n) \geq y_{n+1}$;
		\item\label{I:L:DYNA ordering of x_n, y_n x_n in D_m+n and D_m+n+1 - 2pi}for all $n \geq 0$ $x_n \in T^{-1}(\D_{m+n+1}) \cap \D_{m+n}$.
	\end{enumerate}
\end{lemma}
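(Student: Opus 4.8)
The plan is to establish all six assertions by a single induction on $n\ge 0$, carrying (1), (2), (4), (5), (6) simultaneously, with (3) disposed of first as an $n$-independent identity that then feeds the induction. Since $f$, $T$ and $T^{-1}$ all preserve $\Real$ and $y_0$ is real, every $x_n=f^n(x_0)$ and $y_n=f^n(y_0)$ is real, which is part of (1). For (3), using $\cos(x+2\pi)=\cos x$ a direct calculation gives the identity
\begin{align*}
	f\circ T(x)-T\circ f(x)=(x+2\pi)\cos x+2\pi-\big(x\cos x+4\pi\big)=2\pi(\cos x-1),
\end{align*}
equivalently $T\circ f(x)=f\circ T(x)+2\pi(1-\cos x)\ge f\circ T(x)$ for every real $x$; it is this comparison of $T\circ f$ with $f\circ T$ on $\Real$ that is used in (5). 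I also record for later use that $0<\cos x<1$ on each $\D_k\cap\Real$ with $k\ge N_0$, since there $x$ lies in an interval $\big(2k\pi,\,2k\pi+\tfrac1{3k\pi}\big)$.

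For the base case $n=0$: by hypothesis $y_0\in\D_m\cap\Real$, and $x_0=T^{-1}(f(y_0))=y_0\cos y_0<y_0$ because $0<\cos y_0<1$, giving (4) at $n=0$. Since $m\ge N_0$, Lemma \ref{L:DISC: inclusion for h_n} gives $f(\D_m)\subset\D_{m+1}$, so $f(y_0)\in\D_{m+1}\cap\Real$ and hence $x_0\in T^{-1}(\D_{m+1})\cap\Real\subset\D_m\cap\Real$ (the last inclusion since $T^{-1}(\D_{m+1})\subset\D_m$); this is (6) at $n=0$ and completes (1). Finally $x_1=f(x_0)$ and $y_1=f(y_0)$ lie in $\D_{m+1}$, giving the lower bounds in (2); $T(x_0)-x_1=x_0(1-\cos x_0)>0$ and $T(y_0)-y_1=y_0(1-\cos y_0)>0$ give the comparisons in (2); and $T(x_0)=x_0+2\pi=f(y_0)=y_1$, so (5) holds with equality.

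For the inductive step, assume (1), (2), (4), (5), (6) at index $n$; I establish them at $n+1$ in the order (1), (6), (4), (5), (2). Since $x_n,y_n\in\D_{m+n}\cap\Real$ with $m+n\ge N_0$, Lemma \ref{L:DISC: inclusion for h_n} gives $x_{n+1},y_{n+1}\in\D_{m+n+1}\cap\Real$, which is (1). The key to (6) is the sharpened inclusion $h_k(D_{k+1})\subset D_{k+2}$ for all $k\ge N_0$: taking $a=3(k+1)\pi\ (\ge 3k\pi)$ in Lemma \ref{L:DISC w_n moves to right on half-plane} gives $w_k\big(H_{3(k+1)\pi}\big)\subset H_{3(k+1)\pi+\frac23 k\pi}\subset H_{3(k+2)\pi}$, the last inclusion because $\tfrac23 k\pi\ge 3\pi$ once $k\ge N_0=5$; conjugating by $t\mapsto 1/t$ exactly as in the proof of Lemma \ref{L:DISC: inclusion for h_n} (cf.\ Figure \ref{F:DISC commutative diagram}) turns this into $h_k(D_{k+1})\subset D_{k+2}$. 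Now (6) at $n$ reads $x_n\in T^{-1}(\D_{m+n+1})$, i.e.\ $T^{-(m+n)}(x_n)\in D_{m+n+1}$; applying $h_{m+n}$ (with $k=m+n\ge N_0$) gives $T^{-(m+n+1)}(x_{n+1})=h_{m+n}\big(T^{-(m+n)}(x_n)\big)\in D_{m+n+2}$, that is $x_{n+1}\in T^{-1}(\D_{m+n+2})$; combined with (1) this is (6) at $n+1$, and it also records $T(x_{n+1})\in\D_{m+n+2}$.

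By Lemma \ref{L:DYNA f is incresing on small sets} the function $f$ is strictly increasing on $\D_{m+n}\cap\Real$, so $y_n>x_n$ forces $y_{n+1}=f(y_n)>f(x_n)=x_{n+1}$, giving (4). For (5), the identity of part (3) gives $T\circ f(x_n)=f\circ T(x_n)+2\pi(1-\cos x_n)\ge f(T(x_n))$, so $T(x_{n+1})\ge f(T(x_n))$; moreover $T(x_n)\in\D_{m+n+1}\cap\Real$ (from (6) at $n$) and $y_{n+1}\in\D_{m+n+1}\cap\Real$ (from (1) at $n+1$) with $T(x_n)\ge y_{n+1}$ (from (5) at $n$), so the monotonicity of $f$ on $\D_{m+n+1}\cap\Real$ gives $f(T(x_n))\ge f(y_{n+1})=y_{n+2}$, and hence $T(x_{n+1})\ge y_{n+2}$. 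For (2), $x_{n+2}=f(x_{n+1})$ and $y_{n+2}=f(y_{n+1})$ lie in $\D_{m+n+2}$, giving the lower bounds $>2(m+n+2)\pi$, while $T(x_{n+1})-x_{n+2}=x_{n+1}(1-\cos x_{n+1})>0$ and the analogous identity for $y$ give the comparisons, closing the induction. The only genuine obstacle here is (6): one must notice that Lemma \ref{L:DISC w_n moves to right on half-plane} leaves room to spare, with the shift $\tfrac23 k\pi$ exceeding the $3\pi$ gap between consecutive thresholds as soon as $k\ge N_0$, so that $h_k$ carries not just $D_k$ into $D_{k+1}$ but $D_{k+1}$ into $D_{k+2}$; everything else is routine bookkeeping with the explicit form of $f$ together with Lemmas \ref{L:DISC: inclusion for h_n} and \ref{L:DYNA f is incresing on small sets}.
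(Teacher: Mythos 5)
Your proof is correct, and while the overall scaffolding (induction on $n$ driven by the commutation identity, the inclusion $f(\D_k)\subset\D_{k+1}$, and the monotonicity of $f$ on $\D_k\cap\Real$ from Lemma \ref{L:DYNA f is incresing on small sets}) matches the paper's, your treatment of items (3), (5) and especially (6) takes a genuinely different route. For (3) you establish $T\circ f(x)-f\circ T(x)=2\pi(1-\cos x)\ge 0$, i.e.\ $T\circ f\ge f\circ T$; this agrees with the computation in the paper's own proof of (3), and it is the displayed statement of item (3) (asserting the reverse inequality, which fails e.g.\ at $x=\pi$) that carries a sign slip. The divergence matters most at (6): the paper's induction step deduces $f(x_n)\in T^{-1}(\D_{m+n+2})$ from ``$T\circ f(x_n)\le f\circ T(x_n)$'', which is the wrong direction of the commutation inequality, so that step does not close as printed; your route instead extracts from Lemma \ref{L:DISC w_n moves to right on half-plane} (taking $a=3(k+1)\pi$ and using $\tfrac23 k\pi\ge 3\pi$ for $k\ge N_0$) the sharpened inclusion $h_k(D_{k+1})\subset D_{k+2}$ and pushes $T^{-(m+n)}(x_n)\in D_{m+n+1}$ through $h_{m+n}$, which is correct and in effect repairs the paper's argument. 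Your item (5) is then a conjugated, cleaner version of the paper's computation: instead of expanding $T(x_{k+1})-y_{k+2}$ and invoking monotonicity of $t\cos t$, you chain $T(x_{n+1})\ge f(T(x_n))\ge f(y_{n+1})=y_{n+2}$, at the cost of needing (6) at stage $n$ to place $T(x_n)$ in $\D_{m+n+1}$, a dependence the paper's version avoids. What your approach costs is the extra observation about $h_k$ acting on the larger disc $D_{k+1}$; what it buys is an induction in which every inequality points the right way.
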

\begin{proof}  \ref{I:L:DYNA ordering of x_n,y_n basic x_n y_n properties}.  Since $y_0$ is real and in $\D_m$, it is plain from the definitions of $f$ and $T$ that $(x_n)$ and $(y_n)$ are real.  We also have seen that for all $k \geq N_0$, the function $f$ maps $\D_k$ into $\D_{k+1}$ from which it follows at once that $x_n, y_n \in \D_{m+n}$.

\ref{I:L:DYNA ordering of x_n,y_n Tx_n/y_n bigger than x_n+1/y_n+1}. Both $x_n$ and $y_n$ lie in $\D_{m+n} \cap \Real$.  Because $f(\D_{m+n}) \subset \D_{m+n+1}$, the right hand inequalities follow directly;  at the same time $\cos x_n$ and $\cos y_n$ are in $(0,1)$ because no $x_n$ or $y_n$ is a multiple of $2\pi$, so $x_{n+1} = x_n \cos x_n + 2\pi < T(x_n)$. The argument for $y_n$ is the same.

\ref{I:L:DYNA ordering of x_n y_n, Tf > fT}.  We have $ T\circ f(x) - f\circ T(x) = 2\pi (1 - \cos x) \geq 0$ directly from the definition of $f$.

\ref{I:L:DYNA ordering of x_n, y_n y_n > x_n}. When $n=0$, $f(y_0) = T(x_0)$ and $y_0$ is not a multiple of $2\pi$, so that
	\begin{align*}
	    y_0 - x_0 
	    	&= T(y_0) - T(x_0)
	    	\\& = T(y_0) - f(y_0)
	    	\\& = y_0(1-\cos y_0)
	    	\\&> 0.
	\end{align*}
The statement \ref{I:L:DYNA ordering of x_n, y_n y_n > x_n} now follows because for all $k \geq 0$, we have $f$ is strictly increasing on $\D_{m+k}$ (Lemma \ref{L:DYNA f is incresing on small sets}) and $f(\D_{m+k}) \subset \D_{m+k+1}$, so that $f^n$ is increasing on $\D_m$.

\ref{I:L:DYNA T ordering of x_n, y_n T x_n >= y_n+1}.  The proof is by induction.  $T(x_0) = y_{1}$ when $n=0$.  Suppose \ref{I:L:DYNA T ordering of x_n, y_n T x_n >= y_n+1} holds for some $k \geq 0$.  By definition, $T(x_{k+1}) = T\circ f(x_{k}) $ and $y_{k+2} = f^2(y_{k})$ so that
	\begin{align*}
	    T(x_{k+1}) - y_{k+2} 
	    	&		= x_k \cos x_k + 4\pi - \big((y_{k}\cos y_k  + 2\pi)\cos (y_k\cos y_k + 2\pi) + 2\pi\big)
	    	\\&		= x_k \cos x_k - (y_{k}\cos y_k)\cos (y_k\cos y_k) +2\pi\big(1 - \cos(y_k\cos y_k)\big).
	\end{align*}
Now, since \ref{I:L:DYNA T ordering of x_n, y_n T x_n >= y_n+1} holds when $n=k$, $x_k \geq  y_k \cos y_k$.  Moreover both $x_k, y_k \in \D_{m+k} \cap \Real $. Therefore $y_k \cos y_k$ is also in $\D_{m+k}$ because $\cos y_k \in (0,1)$ and so we can use the fact that $t\cos t$ is increasing and deduce that $x_k \cos x_k \geq  (y_k\cos y_k)\cos (y_k \cos y_k)$.  Therefore
	\begin{align*}
	    T(x_{k+1}) - y_{k+2} 
	    	&	\geq 2\pi(1 - \cos (y_k \cos y_k))
			> 0
	\end{align*}
where the inequality becomes strict since $y_k$ is not a multiple of $2\pi$ and $\cos y_k \in (0,1)$.  Thus \ref{I:L:DYNA T ordering of x_n, y_n T x_n >= y_n+1} holds for $n=k+1$ and so, by induction, for all $n \geq 0$.

\ref{I:L:DYNA ordering of x_n, y_n x_n in D_m+n and D_m+n+1 - 2pi}.  We already know by construction that $x_n \in \D_{m+n}$ for all $n\geq 0$ and we need only prove the stricter membership $T(x_n) \in \D_{m+n+1}$.  We use induction again.  

By construction, $ {y_1 \in f(\D_{m})\subset \D_{m+1} \subset T(\D_m) }$ and since $x_0 = T^{-1}(y_1)$ it follows $x_0 \in T^{-1} (\D_{m+1})$.  Thus \ref{I:L:DYNA ordering of x_n, y_n x_n in D_m+n and D_m+n+1 - 2pi} holds when $n=0$.  

If now for some $n\geq 0$ we have shown $x_n \in T^{-1}(\D_{m+n+1}) $, then ${T (x_n) \in \D_{m+n+1}}$ and $f\circ T (x_n) \in \D_{m+n+2}$.  Using \ref{I:L:DYNA ordering of x_n y_n, Tf > fT} above, $T\circ f(x_n) \leq f\circ T(x_n)$, so $f(x_n) \in T^{-1}(\D_{m+n+2})$ which completes the induction and the result now follows.
\end{proof}
The interaction between applications of $f$ and translations $T$ is illustrated in Figure \ref{F:DYNA mapping diagram}.  Lemma \ref{L:DYNA ordering of x_n y_n} implies $f^n(x_0) < f^n(y_0) < T\circ f^{n-1}(x_0)$.
\begin{figure}[H]
\centering
\begin{tikzpicture}
\draw (0,0) -- (12.56,0) node[below]{$\Real$};
\draw (1.6,0) circle (1.2) node[above=1.12cm] {$\D_{m}$};
\draw (4.8,0) circle (1.12) node[above=1.12cm] {$\D_{m+1}$};
\draw (8,0) circle (1.04) node[above=1.12cm] {$\D_{m+2}$};
\draw (11.2,0) circle (0.96) node[above=1.12cm] {$\D_{m+3}$};
\draw (2.08,0.08)--(2.08,-0.08) node[below left=-0.8] {$x_0$};
\draw (5.04,0.08)--(5.04,-0.08) node[below left=-0.8] {$x_1$};
\draw (7.84,0.08)--(7.84,-0.08) node[below left=-0.8] {$x_2$};
\draw (10.48,0.08)--(10.48,-0.08) node[below left=-0.8] {$x_3$};
\draw (2.32,0.08)--(2.32,-0.08) node[below=0.4] {$y_0$};
\draw (5.28,0.08)--(5.28,-0.08) node[below=0.4] {$y_1$};
\draw (8.08,0.08)--(8.08,-0.08) node[below=0.4] {$y_2$};
\draw (10.72,0.08)--(10.72,-0.08) node[below=0.4] {$y_3$};
\draw[-{Stealth}] (2.08,0)  to [bend left=80] node[midway,above] {$f$} (5.04,0) ;
\draw[-{Stealth}] (5.04,0)  to [bend left=80] node[midway,above] {$f$} (7.84,0) ;
\draw[-{Stealth}] (7.84,0)  to [bend left=80] node[midway,above] {$f$} (10.48,0) ;
\draw[-{Stealth}] (2.32,0)  to [bend left=80] node[midway,above] {} (5.28,0) ;
\draw[-{Stealth}] (5.28,0)  to [bend left=80] node[midway,above] {} (8.08,0) ;
\draw[-{Stealth}] (8.08,0)  to [bend left=80] node[midway,above] {} (10.72,0) ;
\draw[dashed, -{Stealth}] (2.08,0)  to [bend right=50] node[midway,below] {$T$} (5.28,0) ;
\draw[dashed, -{Stealth}] (5.04,0)  to [bend right=50] node[midway,below] {$T$} (8.24,0) ;
\draw[dashed, -{Stealth}] (7.84,0)  to [bend right=50] node[midway,below] {$T$} (11.04,0) ;
\draw (0.4,-0.1) node[below=-2] {$2m\pi$};
\end{tikzpicture}
\caption{Illustration of mapping $f$ on the points $x_0, y_0$}\label{F:DYNA mapping diagram}
\end{figure}
One last preliminary is needed.  We can now estimate the hyperbolic distance between the points $x_n$ and $y_n$ with respect to the circular domain $\D_{m+n}$.
\begin{lemma}\label{L:DYNA hyperbolic distance between x_n and y_n}
For $m \geq N_0$ and $n \geq 1$, $T(x_{n-1}) \in \D_{m+n}$ and $\displaystyle \d [\D_{m+n}] {x_n, y_n} \leq \d [\D_{m+n}] {x_n, T(x_{n-1})}$.
\end{lemma}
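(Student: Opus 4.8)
The plan is to observe that the three real numbers $x_n$, $y_n$ and $T(x_{n-1})$ all lie on the real diameter of the disc $\D_{m+n}$, in that order, and then to use the fact that a diameter of a round disc is a hyperbolic geodesic, so that hyperbolic distance is additive along it.

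First I would establish the membership $T(x_{n-1}) \in \D_{m+n}$. Since $n \geq 1$, Lemma~\ref{L:DYNA ordering of x_n y_n}\ref{I:L:DYNA ordering of x_n, y_n x_n in D_m+n and D_m+n+1 - 2pi}, applied with $n$ replaced by $n-1$, gives $x_{n-1} \in T^{-1}(\D_{m+n}) \cap \D_{m+n-1}$; in particular $x_{n-1} \in T^{-1}(\D_{m+n})$, i.e.\ $T(x_{n-1}) \in \D_{m+n}$, which is the first assertion of the lemma.

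Next I would record the ordering on $\Real$. By parts \ref{I:L:DYNA ordering of x_n,y_n basic x_n y_n properties}, \ref{I:L:DYNA ordering of x_n, y_n y_n > x_n} and \ref{I:L:DYNA T ordering of x_n, y_n T x_n >= y_n+1} of Lemma~\ref{L:DYNA ordering of x_n y_n} (the last applied with $n$ replaced by $n-1$) we have $x_n, y_n \in \D_{m+n} \cap \Real$, $x_n < y_n$ and $y_n \leq T(x_{n-1})$. Combined with the previous step this yields $x_n < y_n \leq T(x_{n-1})$, three points of the real segment $\D_{m+n} \cap \Real$. Since $\D_{m+n}$ is a round disc centred on $\Real$, the reflection $z \mapsto \overline z$ is an anticonformal automorphism of $\D_{m+n}$ fixing this segment pointwise, so the segment is a geodesic for the hyperbolic metric of $\D_{m+n}$. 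Hence the hyperbolic distance is additive along it, so $\d[\D_{m+n}]{x_n, T(x_{n-1})} = \d[\D_{m+n}]{x_n, y_n} + \d[\D_{m+n}]{y_n, T(x_{n-1})} \geq \d[\D_{m+n}]{x_n, y_n}$, which is exactly the required inequality. (If one prefers to avoid geodesic language, one can instead write the hyperbolic density of $\D_{m+n}$ explicitly and integrate it along the real diameter, where it is manifestly monotone in the relevant variable.)

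I do not anticipate any real obstacle here: the only points needing a moment's care are the justification that a diameter is a geodesic — equivalently, the monotonicity of the one-variable integral of the hyperbolic density along it — and getting the index shifts right when quoting Lemma~\ref{L:DYNA ordering of x_n y_n}.
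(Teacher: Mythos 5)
Your proof is correct and takes essentially the same route as the paper: both first obtain $T(x_{n-1}) \in \D_{m+n}$ from Lemma~\ref{L:DYNA ordering of x_n y_n}\ref{I:L:DYNA ordering of x_n, y_n x_n in D_m+n and D_m+n+1 - 2pi} (index shift from $n$ to $n-1$), then record the real ordering $x_n < y_n \leq T(x_{n-1})$, and finally exploit the fact that the real diameter of $\D_{m+n}$ is a hyperbolic geodesic — you phrase this as additivity of distance along the geodesic, while the paper writes the same content as monotonicity of the density integral over an enlarged interval. Your remark justifying the geodesic property via the anticonformal reflection $z \mapsto \overline z$ supplies a small detail the paper takes for granted, but the argument is otherwise identical.
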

\begin{proof}
	In this proof we use $\d{}$ and $\lambda$ to denote the hyperbolic distance and density respectively with respect to the disc $\D_{m+n}$.  Lemma \ref{L:DYNA ordering of x_n y_n} part \ref{I:L:DYNA ordering of x_n, y_n x_n in D_m+n and D_m+n+1 - 2pi} proved that $T(x_{n-1}) \in \D_{m+n}$. Moreover in Lemma \ref{L:DYNA ordering of x_n y_n} we showed that $x_n < y_n \leq T(x_{n-1})$.
	
	Within $\D_{m+n}$, real points lie on a diameter and therefore on a geodesic so that the hyperbolic distance is given by
		\begin{align*}
		    \d{x_n,y_n} = \int_{x_n}^{y_n} \lambda(z)\abs{dz} .
		\end{align*}
	For $n \geq  1$, because $\lambda$ is positive, $T(x_{n-1}) \in \D_{m+n} $ and $T(x_{n-1})> y_n$, we have
		\begin{align*}
		    \d {x_n,y_n}
		    &< \int_{x_n}^{T(x_{n-1})} \lambda(z)\abs{dz}
		    \\&=\d {x_n, T(x_{n-1})}. \qedhere
		\end{align*}
\end{proof}
These lemmas lead to the main result in this section, which was stated in the Introduction as part (5) of Theorem \ref{T:INTRO Main theorem}.
\begin{lemma}
The wandering domain $U_0$ is contracting.
\end{lemma}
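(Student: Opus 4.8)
The plan is to reduce the contracting property of $U_0$ to the behaviour of iterates on the real axis of one fixed component $U_m$ with $m \geq N_0$, and then exploit the two hyperbolic-distance estimates already prepared. By the classification theorem \cite[Theorem A]{benini+2021} a wandering domain is contracting if the hyperbolic distance $\d[U_n]{f^n(z),f^n(z')} \to 0$ for \emph{some} pair $z,z' \in U_0$ not related by $E$; moreover, since $f(U_0)\subset U_1 \subset \cdots$ and all of these eventually become the fixed orbit $(U_m)_{m\geq N_0}$, it suffices to verify the decay for one pair of points inside $U_m$ with $m\geq N_0$. I would take the real points $y_0 \in \D_m$ and $x_0 = T^{-1}(f(y_0))$ of Lemma \ref{L:DYNA ordering of x_n y_n} (note $x_0,y_0$ are genuinely distinct by part \ref{I:L:DYNA ordering of x_n, y_n y_n > x_n}, and they are not $E$-related because $f$ is injective on the real trace of each $\D_{m+k}$ by Lemma \ref{L:DYNA f is incresing on small sets}), and show $\d[U_{m+n}]{x_n,y_n}\to 0$ as $n\to\infty$.

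The first key step is a monotonicity-of-hyperbolic-distance argument: since $\D_{m+n}\subset U_{m+n}$, the inclusion decreases hyperbolic distance, so $\d[U_{m+n}]{x_n,y_n} \leq \d[\D_{m+n}]{x_n,y_n}$, and by Lemma \ref{L:DYNA hyperbolic distance between x_n and y_n} this is at most $\d[\D_{m+n}]{x_n,T(x_{n-1})}$. The second key step transports this last quantity through the commutative diagram of Figure \ref{F:DISC commutative diagram}: the M\"obius map $z\mapsto 1/z$ followed by $T^{-(m+n)}$ carries $\D_{m+n}$ isometrically onto the half-plane $H_{3(m+n)\pi}$ (hyperbolic distance is a conformal invariant), and under this identification $x_n$ corresponds to the real iterate $t_n = w_{m+n-1}\circ\cdots\circ w_m(t_0)$ of Lemma \ref{L:DYNA distance between iterates in H_m}, with $t_0$ the image of $x_0$, while $T(x_{n-1})$ corresponds to $w_{m+n-1}(t_{n-1})\cdot(\text{shift})$, i.e.\ essentially $t_n$ and its predecessor-image differ by one application of $w_{m+n-1}$. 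A little bookkeeping shows $\d[\D_{m+n}]{x_n,T(x_{n-1})}$ equals the half-plane hyperbolic distance between two real points separated by at most $\tfrac{11}{8}(m+n-1)\pi$ and at distance at least $\tfrac{\pi}{3}n(2m+n-1)$ from the boundary line, exactly the configuration estimated in Lemma \ref{L:DYNA distance between iterates in H_m}\eqref{I:L:DYNA Distance between iterates hyperbolic distance t_n to t_n+1 in H_m}; hence it is $O\!\big((m+n)/(n(2m+n-1))\big)\to 0$.

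The main obstacle is the careful matching of coordinates in the second step: the half-plane $H_{3(m+n)\pi}$ attached to $\D_{m+n}$ in the commutative diagram is the one whose base-point image of $x_0$ under the composition $w_{m+n-1}\circ\cdots\circ w_m$ is $t_n$, but Lemma \ref{L:DYNA distance between iterates in H_m} was stated for a starting point $t_0 \in H_{3(m+1)\pi}$ and produces estimates in $H_{3(m+n+1)\pi}$, so I must check that the offsets line up (an index shift by one) and that $T(x_{n-1})$ really does land on $w_{m+n-1}(t_{n-1})$ after conjugation — i.e.\ that $\tfrac{1}{T(x_{n-1})-2(m+n)\pi}$ equals $w_{m+n-1}$ applied to the coordinate of $x_{n-1}$. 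This is where Lemma \ref{L:DYNA ordering of x_n y_n}\ref{I:L:DYNA ordering of x_n, y_n x_n in D_m+n and D_m+n+1 - 2pi} (which guarantees $T(x_{n-1})\in\D_{m+n}$, so the conjugated point is a legitimate interior point of the half-plane) does the work. Once the coordinates are aligned, the bound is immediate from the already-established inequalities \eqref{E:DYNA distance of t_n to boundary} and \eqref{E:DYNA separation of t_n and t_n+1}, and the conclusion that $U_0$ is contracting follows.
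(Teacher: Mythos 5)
Your proposal follows essentially the same route as the paper: fix $m \geq N_0$, take the real pair $x_0,y_0$ from Lemma \ref{L:DYNA ordering of x_n y_n}, bound $\d[U_{m+n}]{x_n,y_n}$ above by $\d[\D_{m+n}]{x_n,T(x_{n-1})}$ via the comparison principle and Lemma \ref{L:DYNA hyperbolic distance between x_n and y_n}, then transport through the M\"obius isometries $T^{-(m+n)}$ and $z\mapsto 1/z$ to the half-plane distance $\d[H_{3(m+n)\pi}]{t_n,t_{n-1}}$ and invoke Lemma \ref{L:DYNA distance between iterates in H_m}. The only slight wobble is in the phrase describing what $T(x_{n-1})$ maps to in half-plane coordinates (it is exactly $t_{n-1}$, since $T^{-(m+n)}(Tx_{n-1}) = \xi_{n-1}$), but your subsequent statement of the gap and boundary-distance bounds is correct, so the argument is sound and matches the paper's.
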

\begin{proof}
	Choose fixed $m \geq N_0$.  
	
	We use the sequences $(x_n)$ and $(y_n)$ from Lemma \ref{L:DYNA ordering of x_n y_n}, which showed that $y_n > x_n$ and that $x_n, y_n \in \D_{m+n} \subset U_{m+n}$.  Since for any $k \geq 0$, $f^k(U_0) \subset U_k$ and a small real interval $(0, \delta)$ in $U_0$ maps to a small real interval in $U_k$, we can ensure $x_0, y_0$ have a pair of $k$-fold pre-images in $U_0$.  It is thus sufficient to show that the hyperbolic distance between iterates of two points in $U_m$ which do not eventually coincide has limit zero.  The classification of \cite[Theorem A]{benini+2021} then implies all such pairs of points have the same property.  
	
	Using the comparison principle \cite[Theorem 8.1]{beardon2006}, since for all $n \geq 0$ we have $\D_{m+n} \subset U_{m+n}$, the hyperbolic density for $U_{m+n}$ is less than that for $\D_{m+n}$, and so the same applies to hyperbolic distance.  Thus for $x_n,y_n \in \D_{m+n}$,
		\begin{align*}
		    \d [U_{m+n}] {x_n, y_n} \leq \d [\D_{m+n}] {x_n, y_n}
		\end{align*}
	and for $n \geq 1$, by Lemma \ref{L:DYNA hyperbolic distance between x_n and y_n} we then have
		\begin{align*}
		    \d [U_{m+n}] {x_n, y_n} \leq \d [\D_{m+n}] {x_n, Tx_{n-1}}. 
		\end{align*}
	Now $T^{m+n}$ is a M\"obius map from $D_{m+n}$ onto $\D_{m+n}$ and is therefore a hyperbolic isometry.  If for any $n \geq 0$ we write $T^{-(m+n)} (x_n) =\xi_n$ , then for $n \geq 1$
		\begin{align*}
			\d [\D_{m+n}] {x_n, Tx_{n-1}} 
			&= \d [D_{m+n}] {T^{-(m+n)}x_n, T^{-(m+n)}(T x_{n-1})} 
		    \\&= \d [D_{m+n}] {\xi_n, \xi_{n-1}} .
		\end{align*}
	Take $t_0$ of Lemma \ref{L:DYNA distance between iterates in H_m} ($n \geq 0$) to be $1 / \xi_0$, so that $1 / \xi_n = t_n$.  The M\"obius mapping $t = 1/z$ between $D_{m+n}$ and $H_{3(m+n)\pi}$ is also hyperbolic isometry so that part \ref{I:L:DYNA Distance between iterates hyperbolic distance t_n to t_n+1 in H_m} may be applied and as $n \to \infty$
		\begin{align*}
		    \d [D_{m+n}] {\xi_n, \xi_{n-1}} = \d [H_{3(m+n)\pi}] {t_n, t_{n-1}} \to 0.
		\end{align*}
	Therefore $d_{U_{m+n}}(x_n, y_n) \to 0$ as $n \to \infty$ and the wandering domain is contracting.
\end{proof}
\section{Conclusion}\label{S:CONCLUDE}
We have concentrated exclusively on a single function $f(z) = z\cos z + 2\pi$ and shown it has an orbit of parabolic like wandering domains $(U_n)$, bounded and shrinking which, scaled by $n$ and suitably positioned, converge to the cauliflower.  We have also shown that within the classification of \cite{benini+2021} the wandering domains are contracting and iterated points converge to the boundary.

It is natural to ask whether other similar functions exist.  We have begun to examine a family of functions
	\begin{align*}
	    f_\lambda (z) = z\cos z + \lambda \sin z + 2\pi, \quad \lambda \in \Complex.
	\end{align*}
The function already studied corresponds to $\lambda = 0$.  If the expansion \eqref{E:CONV power series for g_n} for $g_{n}$ that followed Definition \ref{D:CONV definitions of phi_m,n and g_m} is adapted to this family, it becomes
	\begin{align*}
	    g_{\lambda,n}(z) 
	    	&= (n+1) T^{-(n+1)}\circ f_\lambda \circ T^n \left( z / n\right) 
			\\&= z(1+\lambda) - \pi z^2 
			\\&\hspace{1cm}		+ \frac{1+\lambda}{n} z- \frac{\pi}{n} z^2 
								- \frac{n + 1 + \lambda}{2!n^3} z^3 
								+ \frac{2n(n+1)\pi}{4!n^4} z^4 
								+ \cdots.
	\end{align*}
As $n \to \infty$ all but the leading term $q_\lambda(z) = z(1+\lambda) - \pi z^2$ converges to zero, and we note that $q_\lambda$ is conjugate to the Mandelbrot set quadratic $z \mapsto z^2+c$ with $c = \frac{1}{2}(1+\lambda) - \frac{1}{4}(1+\lambda)^2$.  

It is plausible, as we found for $\lambda = 0$, that scaled iterates $nf_\lambda^n(z) $ behave like iterates of the quadratic $q^n_\lambda$.  Indeed much of the argument presented above is likely to carry through with little change for some values of $\lambda$, leading to a new orbit of non-congruent wandering domains converging to a suitable Fatou component of the quadratic.  Experimentation adds weight to this idea.  For example, taking $\lambda = \frac{1}{3} $ we obtain empirically a diagram almost identical to Figure \ref{F:JULIASET wandering domain} in which the wandering component suitably scaled appears to converge to the shape of the bounded Fatou component of the corresponding quadratic $\frac{4}{3}z-\pi z^2$.  

But further experimentation suggests the conclusion could remain valid for a wider range of $\lambda$.  Providing just one example, let $1+\lambda = e^{2\sqrt 2 \pi i}$ (Mandelbrot parameter $c \simeq -0.547+0.477i$).  In this case the anticipated limiting quadratic has multiple bounded Fatou components, among them a Siegel disc.  Using computer modelling it seems (within the empirical limitations) that the function $f_\lambda$ also has a wandering domain whose shape converges to that of the new quadratic.  The process is illustrated in Figure \ref{F:CONLUDE side by side}.  However, there are now multiple Fatou components, iterates no longer converge to zero and the techniques used above to prove convergence cannot be applied. 
\begin{figure}[H]
\centering
	\includegraphics[width=0.2\textwidth]{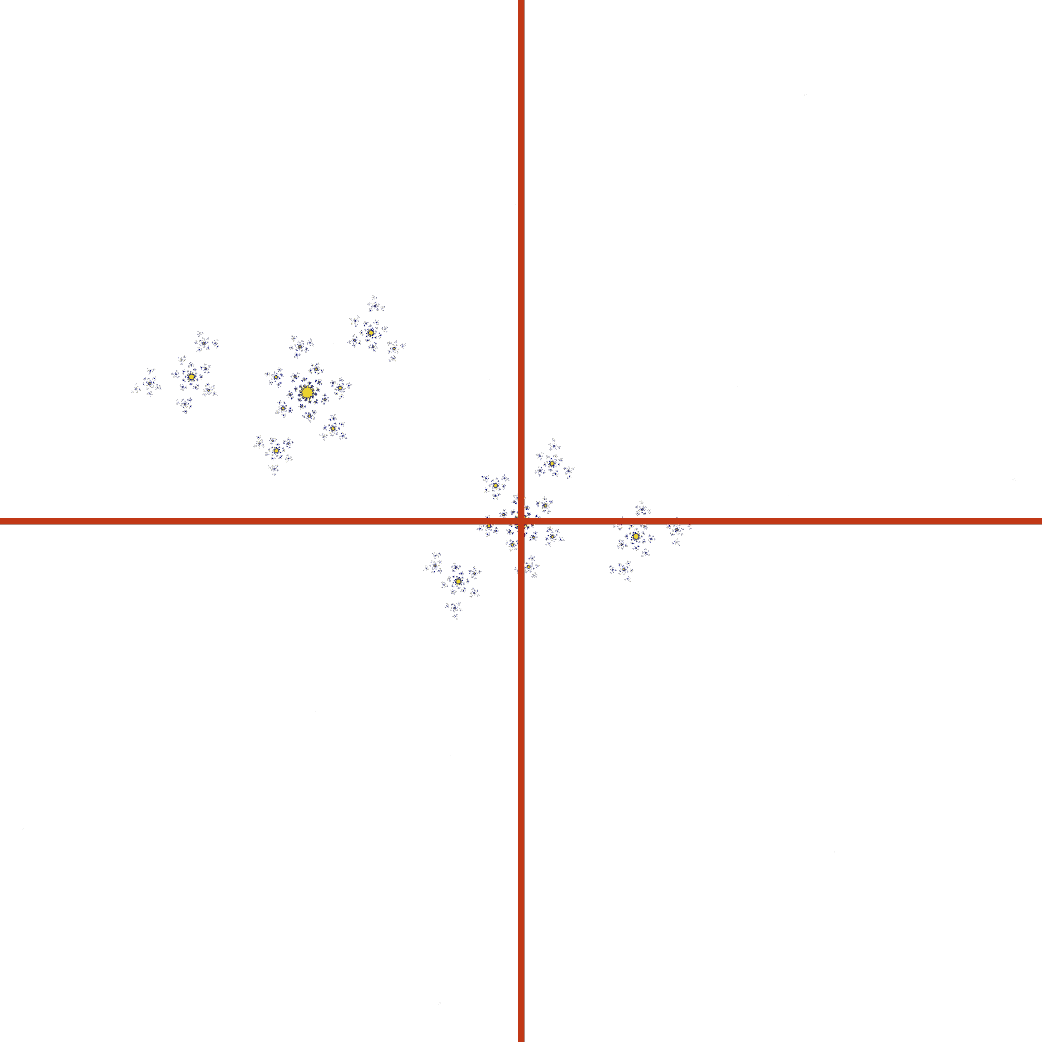}~ 
	\includegraphics[width=0.2\textwidth]{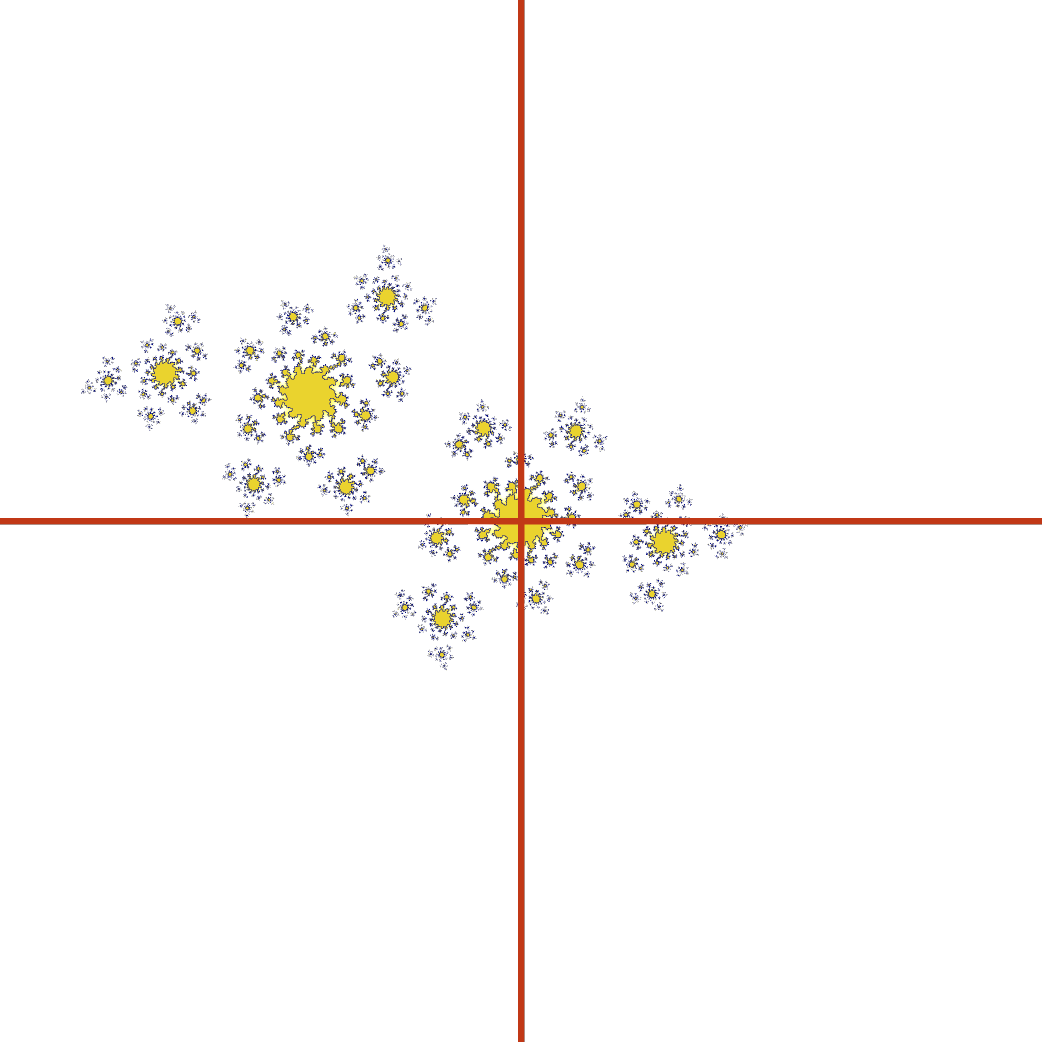}~
	\includegraphics[width=0.2\textwidth]{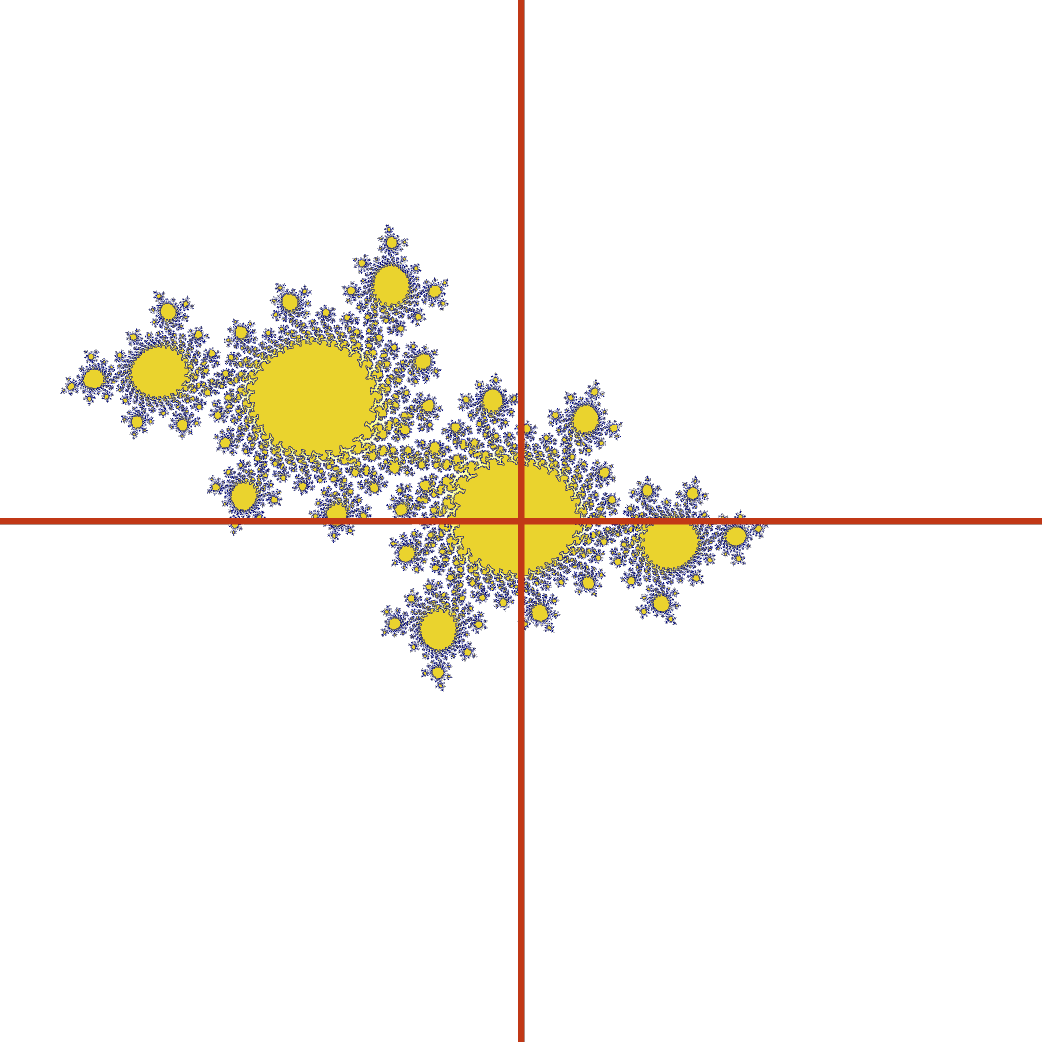}~
	\includegraphics[width=0.2\textwidth]{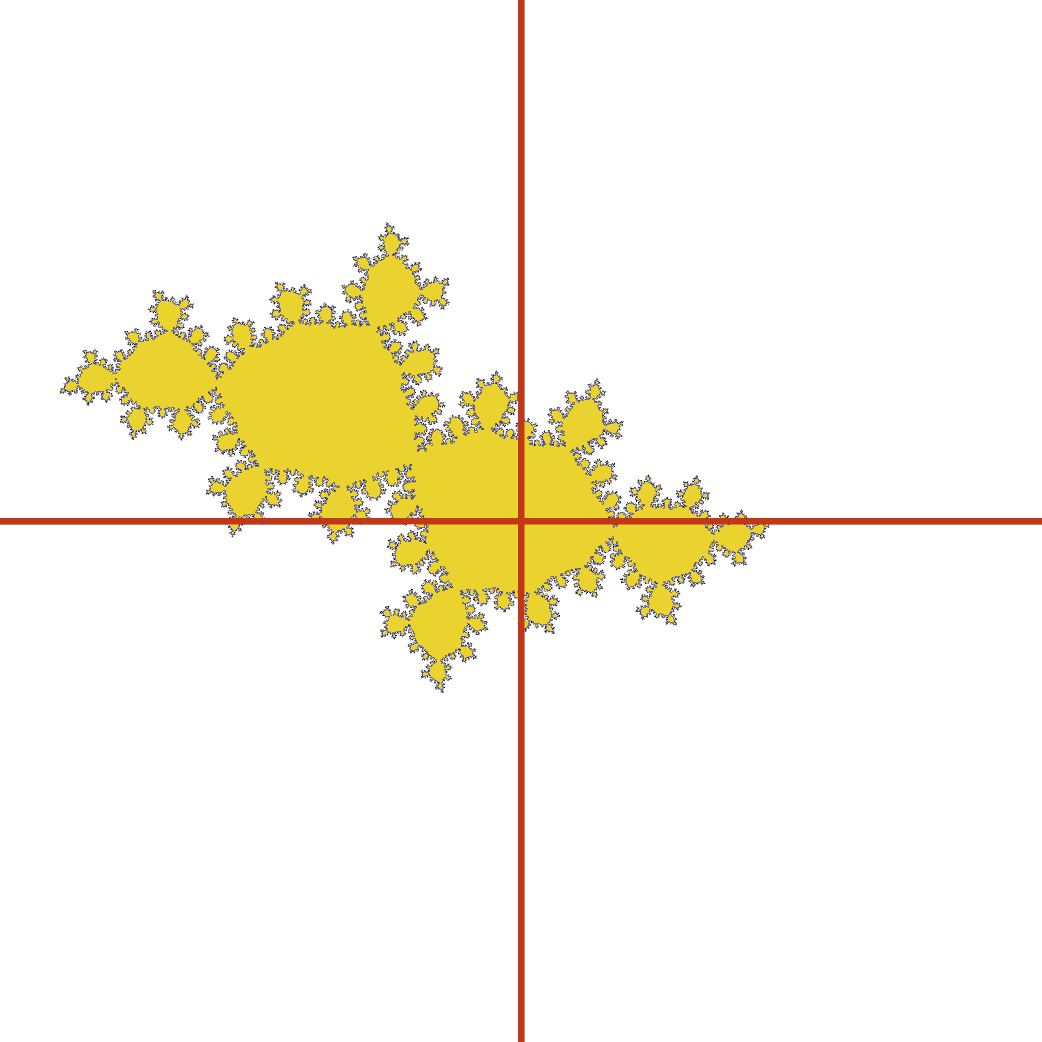}
\caption{Scaled and relocated approximate Fatou components (yellow with dark edging) of $f_\lambda$ for $\lambda= e^{2\sqrt 2 \pi i }-1$, left to right, 2nd, 10th, 200th and limiting iterates.}\label{F:CONLUDE side by side}
\end{figure}
We plan to study this family of function and hope to obtain similar conclusions to those when $\lambda=0$.  For some values of $\lambda$, this may be a modest extension of the work here, but we expect new techniques to be required if it is to be extended more generally.  These could then be applied not only to establish the asymptotic properties of wandering domains in this family, but also to analyse the behaviours in other examples, such as the non-congruent escaping wandering domains of transcendental self-maps of $\Complex \setminus \{0\}$ described in \cite{marti-pete2021}.


\printbibliography

@article{Baker75, 
    AUTHOR = {Baker, I. N.},
     TITLE = {The domains of normality of an entire function},
   JOURNAL = {Ann. Acad. Sci. Fenn. Ser. A I Math.},
  FJOURNAL = {Annales Academiae Scientiarum Fennicae. Series A I.
              Mathematica},
    VOLUME = {1},
      YEAR = {1975},
    NUMBER = {2},
     PAGES = {277--283},
}

@article {baker76,
    AUTHOR = {Baker, I. N.},
     TITLE = {An entire function which has wandering domains},
   JOURNAL = {J. Austral. Math. Soc. Ser. A},
  FJOURNAL = {Australian Mathematical Society. Journal. Series A. Pure
              Mathematics and Statistics},
    VOLUME = {22},
      YEAR = {1976},
    NUMBER = {2},
     PAGES = {173--176}
}

@article{baker84,
    AUTHOR = {Baker, I. N.},
     TITLE = {Wandering domains in the iteration of entire functions},
   JOURNAL = {Proc. London Math. Soc. (3)},
  FJOURNAL = {Proceedings of the London Mathematical Society. Third Series},
    VOLUME = {49},
      YEAR = {1984},
    NUMBER = {3},
     PAGES = {563--576},
}

@incollection {beardon2006,
    AUTHOR = {Beardon, A. F. and Minda, D.},
     TITLE = {The hyperbolic metric and geometric function theory},
 BOOKTITLE = {Quasiconformal mappings and their applications},
     PAGES = {9--56},
 PUBLISHER = {Narosa, New Delhi},
      YEAR = {2007},
}

@book {beardon,
    AUTHOR = {Beardon, Alan F.},
     TITLE = {Iteration of rational functions},
    SERIES = {Graduate Texts in Mathematics},
    VOLUME = {132},
      NOTE = {Complex analytic dynamical systems},
 PUBLISHER = {Springer-Verlag, New York},
      YEAR = {1991},
}

@article{benini+2021,
    AUTHOR = {Benini, Anna Miriam and Evdoridou, Vasiliki and Fagella,
              N\'uria and Rippon, Philip J. and Stallard, Gwyneth M.},
     TITLE = {Classifying simply connected wandering domains},
   JOURNAL = {Math. Ann.},
  FJOURNAL = {Mathematische Annalen},
    VOLUME = {383},
      YEAR = {2022},
    NUMBER = {3-4},
     PAGES = {1127--1178},
}

@article{bergweiler93,
    AUTHOR = {Bergweiler, Walter},
     TITLE = {Iteration of meromorphic functions},
   JOURNAL = {Bull. Amer. Math. Soc. (N.S.)},
  FJOURNAL = {American Mathematical Society. Bulletin. New Series},
    VOLUME = {29},
      YEAR = {1993},
    NUMBER = {2},
     PAGES = {151--188},
}

@article{bergweiler95,
    AUTHOR = {Bergweiler, Walter},
     TITLE = {On the {J}ulia set of analytic self-maps of the punctured
              plane},
   JOURNAL = {Analysis},
  FJOURNAL = {Analysis. International Mathematical Journal of Analysis and
              its Applications},
    VOLUME = {15},
      YEAR = {1995},
    NUMBER = {3},
     PAGES = {251--256},
}

@book{carleson,
    AUTHOR = {Carleson, Lennart and Gamelin, Theodore W.},
     TITLE = {Complex dynamics},
    SERIES = {Universitext: Tracts in Mathematics},
 PUBLISHER = {Springer-Verlag, New York},
      YEAR = {1993},
}

@article{eremenko+92,
    AUTHOR = {Eremenko, A. \`E. and Lyubich, M. Yu.},
     TITLE = {Dynamical properties of some classes of entire functions},
   JOURNAL = {Ann. Inst. Fourier (Grenoble)},
  FJOURNAL = {Universit\'e{} de Grenoble. Annales de l'Institut Fourier},
    VOLUME = {42},
      YEAR = {1992},
    NUMBER = {4},
     PAGES = {989--1020},
}

@incollection {fagella2009+,
    AUTHOR = {Fagella, N\'uria and Henriksen, Christian},
     TITLE = {The {T}eichm\"uller space of an entire function},
 BOOKTITLE = {Complex dynamics},
     PAGES = {297--330},
 PUBLISHER = {A K Peters, Wellesley, MA},
      YEAR = {2009},
      IGNOREISBN = {978-1-56881-450-6},
}

@book{falconer2014,
    AUTHOR = {Falconer, Kenneth},
     TITLE = {Fractal geometry},
   EDITION = {Third},
      NOTE = {Mathematical foundations and applications},
 PUBLISHER = {John Wiley \& Sons, Ltd., Chichester},
      YEAR = {2014},
}

@article{marti-pete2021,
    AUTHOR = {Mart\'i-Pete, David},
     TITLE = {Escaping {F}atou components of transcendental self-maps of the
              punctured plane},
   JOURNAL = {Math. Proc. Cambridge Philos. Soc.},
  FJOURNAL = {Mathematical Proceedings of the Cambridge Philosophical
              Society},
    VOLUME = {170},
      YEAR = {2021},
    NUMBER = {2},
     PAGES = {265--289},
}

@book {MR2193309,
    AUTHOR = {Milnor, John},
     TITLE = {Dynamics in one complex variable},
    SERIES = {Annals of Mathematics Studies},
    VOLUME = {160},
   EDITION = {Third},
 PUBLISHER = {Princeton University Press, Princeton, NJ},
      YEAR = {2006},
}

@article{ripponandstallard2008,
    AUTHOR = {Rippon, P. J. and Stallard, G. M.},
     TITLE = {On multiply connected wandering domains of meromorphic
              functions},
   JOURNAL = {J. Lond. Math. Soc. (2)},
  FJOURNAL = {Journal of the London Mathematical Society. Second Series},
    VOLUME = {77},
      YEAR = {2008},
    NUMBER = {2},
     PAGES = {405--423},
}

@book{steinmetz,
    AUTHOR = {Steinmetz, Norbert},
     TITLE = {Rational iteration},
    SERIES = {De Gruyter Studies in Mathematics},
    VOLUME = {16},
      NOTE = {Complex analytic dynamical systems},
 PUBLISHER = {Walter de Gruyter \& Co., Berlin},
      YEAR = {1993},
}

@article{sullivan85,
    AUTHOR = {Sullivan, Dennis},
     TITLE = {Quasiconformal homeomorphisms and dynamics. {I}. {S}olution of
              the {F}atou-{J}ulia problem on wandering domains},
   JOURNAL = {Ann. of Math. (2)},
  FJOURNAL = {Annals of Mathematics. Second Series},
    VOLUME = {122},
      YEAR = {1985},
    NUMBER = {3},
     PAGES = {401--418},
}

@article{zheng2006,
    AUTHOR = {Zheng, Jian-Hua},
     TITLE = {On multiply-connected {F}atou components in iteration of
              meromorphic functions},
   JOURNAL = {J. Math. Anal. Appl.},
  FJOURNAL = {Journal of Mathematical Analysis and Applications},
    VOLUME = {313},
      YEAR = {2006},
    NUMBER = {1},
     PAGES = {24--37},
}
\end{document}